      \def\@setcopyright{}
      \def\serieslogo@{}
\newcommand{\Complex}{\mathbb C}
\newcommand{\Real}{\mathbb R}
\newcommand{\ddbar}{\overline\partial}
\newcommand{\pr}{\partial}
\newcommand{\ol}{\overline}
\newcommand{\Td}{\widetilde}
\newcommand{\norm}[1]{\left\Vert#1\right\Vert}
\newcommand{\abs}[1]{\left\vert#1\right\vert}
\newcommand{\set}[1]{\left\{#1\right\}}
\newcommand{\To}{\rightarrow}
\DeclareMathOperator{\Ker}{Ker}
\theoremstyle{plain}
\newtheorem{theorem}{Theorem}[section]
\newtheorem{lemma}[theorem]{Lemma}
\newtheorem{corollary}[theorem]{Corollary}
\newtheorem{proposition}[theorem]{Proposition}
\newtheorem{definition}[theorem]{Definition}
\newtheorem{remark}[theorem]{Remark}
\newtheorem{ass}[theorem]{Assumption}
\numberwithin{equation}{section}
\begin{document}

\title[$S^1$-equivariant Index theorems and Morse inequalities on complex manifolds with boundary]
{$S^1$-equivariant Index theorems and Morse inequalities on complex manifolds with boundary}

\author[]{Chin-Yu Hsiao}
\address{Institute of Mathematics, Academia Sinica and National Center for Theoretical Sciences, 6F, Astronomy-Mathematics Building,
No.1, Sec.4, Roosevelt Road, Taipei 10617, Taiwan}
\email{chsiao@math.sinica.edu.tw or chinyu.hsiao@gmail.com}
\thanks{Chin-Yu Hsiao was partially supported by Taiwan Ministry of Science and Technology project 106-2115-M-001-012 and Academia Sinica Career Development
Award. }
\author[]{Rung-Tzung Huang}
\thanks{Rung-Tzung Huang was supported by Taiwan Ministry of Science and Technology projects 105-2115-M-008-008-MY2 and 107-2115-M-008-007-MY2.}
\address{Department of Mathematics, National Central University, Chung-Li 320, Taiwan}

\email{rthuang@math.ncu.edu.tw}

\author[]{Xiaoshan Li}
\address{School of Mathematics
and Statistics, Wuhan University, Wuhan 430072, Hubei, China}
\thanks{Xiaoshan Li was supported by  National Natural Science Foundation of China (Grant No. 11871380,  No. 11501422)}
\email{xiaoshanli@whu.edu.cn}
\author[]{Guokuan Shao}

\address{School of Mathematics (Zhuhai), Sun Yat-sen University, Zhuhai 519082, Guangdong, China}

\email{shaogk@mail.sysu.edu.cn}

\begin{abstract}
Let $M$ be a complex manifold of dimension $n$ with smooth connected boundary $X$. Assume that $\ol M$ admits a holomorphic $S^1$-action preserving the boundary $X$ and the $S^1$-action is transversal on $X$. We show that the $\ddbar$-Neumann Laplacian on $M$ is transversally elliptic and as a consequence, the $m$-th Fourier component of the $q$-th Dolbeault cohomology group $H^q_m(\ol M)$ is finite dimensional, for every $m\in\mathbb Z$ and every $q=0,1,\ldots,n$. This enables us to define  $\sum^{n}_{j=0}(-1)^j{\rm dim\,}H^j_m(\ol M)$ the $m$-th Fourier component of the Euler characteristic on $M$ and to study large $m$-behavior of $H^q_m(\ol M)$.  In this paper, we establish an index  formula for $\sum^{n}_{j=0}(-1)^j{\rm dim\,}H^j_m(\ol M)$ and Morse inequalities for $H^q_m(\ol M)$.
\end{abstract}

\maketitle \tableofcontents

 \section{Introduction and statement of the main results}\label{s-gue170929}
\subsection{Introduction}
Let $M$ be a complex manifold of dimension $n$ with smooth connected boundary $X$. The study of holomorphic sections on $\ol M$ is an important subject in complex analysis, complex geometry and is closely related to deformation and embedding problems on complex manifolds with boundary (see~~\cite{Be05},~\cite{EH00},~\cite{Ma96},~\cite{Ma16}).
 The difficulty comes from the fact that the  associated $\ddbar$-Neumann Laplacian on $M$ can be non-hypoelliptic and it is very difficult to understand the space of holomorphic sections. A clue to the above phenomenon arises from the following.  By $\overline{\partial }^{2}=0$, one has the
 $\overline{\partial }$-complex: $\cdots \rightarrow \Omega ^{0,q-1}(\ol M)\rightarrow \Omega
^{0,q}(\ol M)\rightarrow \Omega ^{0,q+1}(\ol M)\rightarrow \cdots $ and the $q$-th Dolbeault cohomology group: $H^{q}(\ol M):=\frac{\mathrm{Ker\,}\overline{%
\partial } : \Omega ^{0,q}(\ol M)\rightarrow \Omega ^{0,q+1}(\ol M)}{\mathrm{Im\,}%
\overline{\partial }:\Omega ^{0,q-1}(\ol M)\rightarrow \Omega ^{0,q}(\ol M)}$.
As in complex manifolds without boundary case, to understand the space $H^{0}(\ol M)$, one could try
to establish a Hirzebruch-Riemann-Roch theorem for
$\sum\limits_{j=0}^{n}(-1)^{j}{\rm dim\,}H^{j}(\ol M)$, an analogue of the Euler characteristic, and
to prove vanishing
theorems for $H^{j}(\ol M)$, $j\geq 1$. The first difficulty with
such an approach lies in the
fact that ${\rm dim\,}H^{j}(\ol M)$ could be infinite for some $j$.

Another line of thought lies in the fact that the space $H^{q}(\ol M)$ is related to the $\ddbar$-Neumann Laplacian
\[\Box^{(q)}=\ddbar^\star\,\ddbar+\ddbar\,\ddbar^\star: {\rm Dom\,}\Box^{(q)}\subset L^2_{(0,q)}(M)\To L^2_{(0,q)}(M).\]
One can try to study the distribution kernel of the orthogonal projection onto ${\rm Ker\,}\Box^{(q)}$ (Bergman kernel) and the associated heat operator $e^{-t\Box^{(q)}}$.  Unfortunately without any Levi
curvature assumption, $\Box^{(q)}$ is not hypoelliptic in general and
it is unclear how to study  the kernel of $\Box^{(q)}$ and  $e^{-t\Box^{(q)}}$.

Assume that $\ol M$ admits a holomorphic compact Lie group action $G$.
The key observation in this paper
is the following: If all the "non-hypoelliptic directions" of the $\ddbar$-Neumann Laplacian are contained in the space of vector fields on $X$ induced by the Lie algebra of $G$, then we can show that $\Box^{(q)}$ is transversally elliptic in the sense of  Atiyah and Singer without any Levi curvature assumption. In this case, it is possible to study $G$-equivariant holomorphic sections on $\ol M$, $G$-equivariant Bergman and heat kernels on $\ol M$ and to overcome the difficulty mentioned above.  On the other hand, the study of $G$-equivariant holomorphic sections on $\ol M$ has its own  interest and is closely related to  $G$-equivariant embedding and deformation problems (see~\cite{BSW78},~\cite{Ham79}), geometric quantization theory and $G$-equivariant index theorems on complex manifolds with boundary. In this paper, we restrict ourselves to the $S^1$-action case. It should be mentioned that it is possible to study general compact Lie group action case by using the method developed in this paper.

Suppose that $\ol M$ admits a holomorphic $S^{1}$-action $e^{i\theta}$ preserving  the boundary $X$ and the $S^{1}$-action is transversal on $X$. For every $m\in\mathbb Z$, let $H^q_m(\ol M)$ be the $m$-th Fourier component or representation of the $q$-th Dolbeault cohomology group with respect to the $S^1$-action (see \eqref{e-gue170929IId}).
We will show in Section~\ref{s-gue171006} that ${\rm dim\,}H^{q}_{m}(\ol M)<+\infty$, for every $m\in\mathbb Z$ and every $q=0,1,\ldots,n$.
To find an exact formula for the Euler characteristic $\sum^{n}_{j=0}(-1)^j{\rm dim\,}H^j_m(\ol M)$ and to establish Morse inequalities for $H^q_m(\ol M)$ are very natural and fundamental problems.
In this paper, we successfully  establish exact formula for $\sum^{n}_{j=0}(-1)^j{\rm dim\,}H^j_m(\ol M)$ and Morse inequalities for $H^q_m(\ol M)$. We believe that our results will have some applications in complex geometry and geometric quantization theory.  It should be noticed that the index formula for $\sum^{n}_{j=0}(-1)^j{\rm dim\,}H^j_m(\ol M)$ has its own interest. To the authors' knowledge, the index formula proved in the present paper is the first general equivariant index theorem for a transversally elliptic operator on a complex manifold with boundary.  We describe our approach briefly. In Section~\ref{s-gue171010}, we introduce a new operator $\hat\ddbar_{\beta}: \Omega^{0, q}(X)\rightarrow\Omega^{0,q+1}(X)$, where $\hat\ddbar_\beta$ is a classical pseudodifferential operator of order $1$ and we have
\begin{equation}\label{e-gue171026}
\mbox{$\hat\ddbar_\beta=\ddbar_b$+lower order terms},
\end{equation}
where $\ddbar_b$ denotes the tangential Cauchy-Riemann operator on $X$. The new operator $\hat\ddbar_{\beta}$ commutes with the $S^1$-action and $\hat\ddbar^2_{\beta}=0$ and hence we can define $\hat H^q_{\beta,m}(X)$ the $m$-th Fourier component of the $q$-th  cohomology group with respect to $\hat\ddbar_{\beta}$-complex. In Section~\ref{s-gue171010}, we develop some kind of reduction to the boundary technique and we can show that for every $q=0,1,\ldots,n-1$,
\begin{equation}\label{e-gue171026I}
H^q_m(\ol M)\cong \hat H^q_{\beta,m}(X),\ \ \forall m\gg1.
\end{equation}

\begin{remark}\label{r-gue200123}
It should be mention that there is no corresponding result for negative frequencies even of large modules. 
Let us see a simple example (see also Example (III) at the end of this section). Assume that the boundary $X$ is strongly pseudoconvex. By Kohn's result (~\cite{FK72}), 
$\Box^{(q)}$ is hypoelliptic, for every $q=1,\ldots,n-1$,  and 
${\rm dim\,}H^q(\ol M)$ is finite dimensional, $q=1,\ldots,n-1$, and hence  ${\rm dim\,}H^q_m(\ol M)=0$, for every $m\in\mathbb Z$, $\abs{m}\gg1$, $q=1,\ldots,n-1$. 
But on the boundary $X$, the Kohn Laplacian for $(0,n-1)$ forms is not hypoelliptic and by a similar argument as in the proofs of \eqref{e-gue171023I} and \cite[Theorem 2.10]{HL15} we have ${\rm dim}\hat H^{n-1}_{\beta, m}(X)\approx |m|^{n-1}$ when $m\rightarrow -\infty$. 
Hence \eqref{e-gue171026I} is not true in general when $m\ll -1$. Indeed, we will show in the end of 
Section~\ref{s-gue200123} that ${\rm dim\,}H^j_m(\ol M)=0$  when $m\ll -1$, for every $j=0,1,\ldots,n-1$ (see Theorem~\ref{t-gue2001}). 

We should also mention that the technical reason for the role of positive Fourier coefficients is Lemma \ref{t-gue171012ta}.
\end{remark}


From \eqref{e-gue171026I}, \eqref{e-gue171026} and the homotopy invariance of the index, we conclude that if $m\gg1$, then
\begin{equation}\label{e-gue171026II}
\sum^{n-1}_{j=0}(-1)^j{\rm dim\,}H^j_m(\ol M)=\sum^{n-1}_{j=0}(-1)^j{\rm dim\,}H^j_{b,m}(X),
\end{equation}
where $H^j_{b,m}(X)$ denotes the $m$-th Fourier component of the $j$-th Kohn-Rossi cohomology group. It was established in~\cite{CHT} an index formula for $\sum^{n-1}_{j=0}(-1)^j{\rm dim\,}H^j_{b,m}(X)$. Combining the result in~\cite{CHT} and \eqref{e-gue171026II}, we obtain an exact formula for $\sum^{n-1}_{j=0}(-1)^j{\rm dim\,}H^j_m(\ol M)$ when $m\gg1$. Note that by using Kohn's estimate, it is easy to see that for $m\gg1$, ${\rm dim\,}H^n_m(\ol M)=0$. Our index formula tells us that for $m\gg1$, $\sum^{n-1}_{j=0}(-1)^j{\rm dim\,}H^j_m(\ol M)$ is an integration of some characteristic forms over the boundary $X$. 
When $\abs{m}$ is small, it might be possible to prove a formula for  $\sum^{n}_{j=0}(-1)^j{\rm dim\,}H^j_m(\ol M)$ involving integration of some characteristic forms over the domain $M$. In Section~\ref{s-gue171021}, we generalize the technique developed in~\cite{HL15} to $\hat\ddbar_\beta$ case (pseudodifferential operator case) and by using the identification \eqref{e-gue171026I}, we successfully establish Morse inequalities for $H^q_m(\ol M)$.

We now formulate our main results. We refer the reader to Section~\ref{s:prelim} for some standard notations and terminology used here.
Let $M$ be a relatively compact open subset with connected smooth boundary $X$ of a complex manifold $M'$ of dimension $n$.
Let $\rho\in C^\infty(M',\Real)$ be a defining function of $X$, that is,
\[X=\{x\in M';\, \rho(x)=0\},\ \ M=\{x\in M';\, \rho(x)<0\}\]
and $d\rho(x)\neq0$ at every point $x\in X$.
Then the manifold $X$ is a CR manifold with a natural CR structure $T^{1,0}X:=T^{1, 0}M'|_X\cap \mathbb CTX$, where $T^{1,0}M'$ denotes the
holomorphic tangent bundle of $M'$.

Assume that $M'$ admits a holomorphic $S^{1}$-action $e^{i\theta}$, $\theta\in[0,2\pi]$, $e^{i\theta}: M'\to M'$, $x\in M'\To e^{i\theta}\circ x\in M'$. It means that
the $S^{1}$-action preserves the complex structure $J$ of $M'$.
In this work, we assume that

\begin{ass}\label{a-gue170929}
The $S^{1}$-action preserves the boundary $X$, that is, we can find a defining function $\rho\in C^\infty(M',\Real)$ of $X$ such that $\rho(e^{i\theta}\circ x)=\rho(x)$, for every $x\in M'$ and every $\theta\in[0,2\pi]$.
\end{ass}

The $S^1$-action $e^{i\theta}$ induces a $S^1$-action $e^{i\theta}$ on $X$. Put
\begin{equation}\label{e-gue171021y}
X_{{\rm reg\,}}:=\set{x\in X;\, e^{i\theta}\circ x\neq x,\ \ \forall\theta\in]0,2\pi[}.
\end{equation}
In this work, we assume that
\begin{equation}\label{e-gue171029}
\mbox{$X_{{\rm reg\,}}$ is non-empty}.
\end{equation}
Since $X$ is connected,  $X_{{\rm reg\,}}$ is an open subset of $X$ and $X\setminus X_{{\rm reg\,}}$ is of measure zero.

Let $T\in C^\infty(M',TM')$ be the global real vector field induced by $e^{i\theta}$, that is $(Tu)(x)=\frac{\pr}{\pr\theta}u(e^{i\theta}\circ x)|_{\theta=0}$, for every $u\in C^\infty(M')$.  In this work, we assume that

\begin{ass}\label{a-gue170929I}
$\Complex T(x)\oplus T^{1,0}_xX\oplus T^{0,1}_xX=\Complex T_xX$, for every $x\in X$.
\end{ass}

The Assumption \ref{a-gue170929I} implies that the $S^1$-action on $M'$ induces a locally free $S^1$-action on $X$. Let $\rho$ be the  defining function of $M$ given in the Assumption \ref{a-gue170929}. Since $X$ is connected and by the Assumption \ref{a-gue170929I},  $\langle J(d\rho), T\rangle$ is always non-zero on $X$. In this work, we always assume that
\begin{equation}\label{ass-111217}
\langle J(d\rho), T\rangle<0 ~\text{on}~X,
\end{equation}
where $J$ is the complex structure tensor on $M'$.

Let $\Omega^{0,q}(M')$ denote the space of smooth $(0,q)$ forms on $M'$ and let $\Omega^{0,q}(\ol M)$ denote the space of restrictions to $M$ of elements in $\Omega^{0,q}(M')$. For every $m\in\mathbb{Z}$, put
\begin{equation}\label{e-gue170929II}
\Omega^{0,q}_{m}(M')=\{u\in\Omega^{0,q}(M');\, \mathcal L_Tu=imu \}
\end{equation}
where $\mathcal L_Tu$ is the Lie derivative of $u$ along direction $T$. For convenience, we write $Tu:=\mathcal L_Tu$. Similarly, let $\Omega^{0,q}_m(\ol M)$ denote the space of restrictions to $M$ of elements in $\Omega^{0,q}_m(M')$. We write $C^\infty(\ol M):=\Omega^{0,0}(\ol M)$, $C^\infty_m(\ol M):=\Omega^{0,0}_m(\ol M)$. Let $\ddbar:\Omega^{0,q}(M')\To\Omega^{0,q+1}(M')$
be the part of the exterior differential operator which maps forms of type $(0,q)$ to forms of
type $(0,q+1)$.
Fix $m\in\mathbb Z$. Since the $S^{1}$-action preserves the complex structure $J$ of $M'$, $T$ commutes with $\ddbar$ and hence
\[\overline\partial: \Omega_m^{0, q}(\overline M)\rightarrow\Omega_m^{0, q+1}(\overline M).\]
We write $\ddbar_m$ to denote the restriction of $\ddbar$ to $\Omega^{0,q}_m(\ol M)$.
We have the following $\ddbar_m$-complex
\begin{equation}\label{complex}
\cdot\cdot\cdot\rightarrow\Omega^{0,q-1}_{m}(\ol M)\xrightarrow{\ddbar_m}\Omega^{0,q}_{m}(\ol M)
\xrightarrow{\ddbar_m}\Omega^{0,q+1}_{m}(\ol M)\rightarrow\cdot\cdot\cdot
\end{equation}
The $m$-th Fourier component of the $q$-th Dolbeault cohomology group is given by
\begin{equation}\label{e-gue170929IId}
H^{q}_{m}(\ol M):=\frac{{\rm Ker\,}\ddbar_m: \Omega_m^{0, q}(\overline M)\rightarrow\Omega_m^{0, q+1}(\overline M)}{{\rm Im\,}\ddbar_m: \Omega_m^{0, q-1}(\overline M)\rightarrow\Omega_m^{0, q}(\overline M)}.
\end{equation}
We will prove in Theorem~\ref{t-gue171008pm} that for every $q=0,1,\ldots,n-1$, and every $m\in\mathbb Z$, we have
\begin{equation}\label{e-gue170929III}
{\rm dim\,}H^{q}_{m}(\ol M)<+\infty.
\end{equation}

We introduce some notations. Take $\omega_0=J(d\rho)$ where $\rho$ is an $S^1$-invariant defining function for $M$ defined in (\ref{e-gue171006aI}). 
{
For $p\in X$, the Levi form $\mathcal{L}_p$ (with respect to $\omega_0$) is the Hermitian quadratic form on $T^{1,0}_pX$ given by
\begin{equation}\label{e-gue170930a}
\mathcal{L}_p(U, V)=-\frac{1}{2i}\langle\,d\omega_0(p)\,,\,U\wedge\ol V\,\rangle,\ \ U, V\in T^{1,0}_pX.
\end{equation}
For ever $j=0,1,\ldots,n-1$, put
\begin{equation}\label{e-gue170930I}
X(j):=\set{x\in X;\, \mbox{$\mathcal{L}_x$ has exactly $j$ negative eigenvalues and $n-1-j$ positive eigenvalues}}.
\end{equation}
\subsection{Main results}

The first main result of this work is the following index formula

\begin{theorem}\label{t-gue171002}
With the notations and assumptions above, there is an $m_0>0$ such that for every $m\in\mathbb Z$ with $m\geq m_0$, we have
\begin{equation}\label{e-gue171002}
\sum_{j=0}^{n-1}(-1)^j{\rm dim\,}H^{j}_{m}(\ol M)=\frac{1}{2\pi}\int_X \mathrm{Td_b\,}(T^{1,0}X) \wedge e^{-m\frac{d\omega_0}{2\pi}} \wedge(-\omega_0),
\end{equation}
where $\mathrm{Td_b\,}(T^{1,0}X)$ denotes the \emph{tangential Todd class} of $T^{1,0}X$ (see Definition~\ref{d-gue150516}).
\end{theorem}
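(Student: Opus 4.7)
The plan is to transfer the computation of the equivariant holomorphic Euler characteristic of $\ol M$ to the boundary $X$, where the Kohn--Rossi equivariant index formula of \cite{CHT} is available. The argument has three ingredients, already sketched in the introduction: vanishing of the top-degree contribution, a boundary reduction isomorphism, and homotopy invariance of the transversal index.

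First I would verify that $\dim H^n_m(\ol M) = 0$ for $m \gg 1$. Because $\Box^{(n)}$ is transversally elliptic and the Reeb-type condition $\langle J(d\rho),T\rangle<0$ holds, Kohn-type subelliptic $L^2$ estimates localized to the $m$-th Fourier component of $\Omega^{0,n}(\ol M)$ produce a spectral gap for $\Box^{(n)}$ that grows with $|m|$ and forces $H^n_m(\ol M)=0$ for all sufficiently large $m$; hence the Euler characteristic is fully captured by $\sum_{j=0}^{n-1}(-1)^j \dim H^j_m(\ol M)$.

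The main analytic step, to be carried out in Section~\ref{s-gue171010}, is the construction of the pseudodifferential operator $\hat{\ddbar}_\beta$ on $X$ together with the isomorphism \eqref{e-gue171026I}. The strategy is to build an $S^1$-invariant approximate Poisson operator $P \colon \Omega^{0,q}(X) \to \Omega^{0,q}(\ol M)$ that sends a $(0,q)$-form on $X$ to a $\ddbar$-almost-closed extension on $\ol M$, modulo operators that are smoothing on the $m$-th Fourier components for $m \gg 1$; then $\hat{\ddbar}_\beta$ is defined so that the diagram of $\ddbar \circ P$ and $P \circ \hat{\ddbar}_\beta$ agrees up to such negligible terms. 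A symbolic calculation shows that the extension differs from the naive radial extension by operators of order $0$, yielding \eqref{e-gue171026}. To establish the isomorphism \eqref{e-gue171026I}, one has to show (i) that every class in $H^q_m(\ol M)$ admits a representative in the image of $P$, via Hodge theory for the transversally elliptic operator $\Box^{(q)}$ and the finite-dimensionality statement of Theorem~\ref{t-gue171008pm}, and (ii) that the boundary trace map descends to a well-defined inverse on cohomology for $m \gg 1$. I expect this reduction-to-the-boundary step to be the principal technical obstacle, as it requires both fine microlocal analysis of the $\ddbar$-Neumann Laplacian in the $S^1$-equivariant setting and a delicate interplay between the $\ddbar$-complex on $\ol M$ and the tangential Cauchy--Riemann complex on $X$.

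With \eqref{e-gue171026I} and the vanishing of $H^n_m(\ol M)$ in hand, homotopy invariance of the equivariant transversal index applied to the roll-up $\hat{\ddbar}_\beta + \hat{\ddbar}_\beta^*$, viewed as a $\mathbb Z/2$-graded operator whose principal symbol coincides with that of $\ddbar_b + \ddbar_b^*$ by \eqref{e-gue171026}, yields
\[
\sum_{j=0}^{n-1}(-1)^j \dim H^j_m(\ol M) \;=\; \sum_{j=0}^{n-1}(-1)^j \dim H^j_{b,m}(X)
\]
for all $m \gg 1$. Substituting the index formula of \cite{CHT} for the right-hand side, which is precisely $\frac{1}{2\pi}\int_X \mathrm{Td_b\,}(T^{1,0}X) \wedge e^{-m\, d\omega_0/(2\pi)} \wedge (-\omega_0)$, delivers \eqref{e-gue171002} and completes the proof.
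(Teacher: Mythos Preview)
Your proposal is correct and follows essentially the same architecture as the paper: reduce to the boundary via the operator $\hat\ddbar_\beta$ and the isomorphism \eqref{e-gue171026I}, invoke homotopy invariance of the transversal index to pass from $\hat\ddbar_\beta$ to $\ddbar_b$, and then apply the Kohn--Rossi index formula of \cite{CHT}. One technical refinement worth noting: the paper does not use an \emph{approximate} Poisson operator but rather the exact Poisson operator $\tilde P$ for the perturbed elliptic operator $\tilde\Box^{(q)}_f = \Box^{(q)}_f + K^{(q)}$ (with $K^{(q)}$ the finite-rank projector onto the Dirichlet kernel), and then defines $\ddbar_\beta = Q\gamma\ddbar\tilde P$ followed by a conjugation $\hat\ddbar_\beta = (\tilde Q\tilde P^\star\tilde P)^{1/2}\ddbar_\beta(\tilde Q\tilde P^\star\tilde P)^{-1/2}$ to make the adjoint agree with the $L^2(X)$ inner product; the isomorphism \eqref{e-gue171026I} is then proved by showing that the restriction map $u\mapsto\gamma u$ is a bijection from harmonic forms on $\ol M$ to $\mathrm{Ker}\,\Box^{(q)}_{\beta,m}$, the key point being the injectivity of a certain order-two boundary operator on the $m$-th Fourier component for $m\gg 1$ (Theorem~\ref{t-gue171012ta}).
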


By using Kohn's estimate, it is easy to see that $H^n_m(\ol M)=\set{0}$ for $m\gg1$. Hence in the index formula \eqref{e-gue171002}, we only need to sum over $j=0,1,\ldots,n-1$. Our second main result is the following weak and strong Morse inequalities.

\begin{theorem}\label{t-gue170930}
With the notations and assumptions above, for every $q=0,1,\ldots,n-1$, we have
\begin{equation}\label{e-gue170930II}
{\rm dim\,}H^q_{m}(\ol M)\leq \frac{m^{n-1}}{(n-1)!(2\pi)^n}\int_{X(q)}(-1)^{n+q}(d\omega_0)^{n-1}\wedge\omega_0+o(m^{n-1}),\ \ m\in\mathbb N,
\end{equation}
and
\begin{equation}\label{e-gue170930III}
\begin{split}
&\sum_{j=0}^q(-1)^{q-j}{\rm dim\,}H^j_m(\overline M)\\
&\leq\frac{m^{n-1}}{(n-1)!(2\pi)^n}\sum_{j=0}^q(-1)^{q-j}\int_{X(j)}(-1)^{n+j}(d\omega_0)^{n-1}\wedge\omega_0+o(m^{n-1}),\ \ m\in\mathbb N.
\end{split}
\end{equation}
\end{theorem}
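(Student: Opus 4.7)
The plan is to transfer the Morse inequalities from the boundary CR complex to the Dolbeault complex on $\ol M$ by means of the identification \eqref{e-gue171026I} and then to adapt the semiclassical heat-kernel technique of \cite{HL15} to the pseudodifferential complex generated by $\hat\ddbar_\beta$. Concretely, since for every $q=0,1,\ldots,n-1$ and every sufficiently large $m$ one has $\dim H^q_m(\ol M)=\dim \hat H^q_{\beta,m}(X)$, it suffices to prove the analogous weak and strong Morse inequalities for $\hat H^q_{\beta,m}(X)$; values of $m$ below any fixed threshold contribute only $O(1)$, which is absorbed by the error term $o(m^{n-1})$.

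The concrete steps are as follows. First, introduce the formally self-adjoint $S^1$-invariant Laplacian
\begin{equation*}
\hat\Box^{(q)}_\beta := \hat\ddbar_\beta^{\,\star}\,\hat\ddbar_\beta + \hat\ddbar_\beta\,\hat\ddbar_\beta^{\,\star},
\end{equation*}
a classical pseudodifferential operator of order two whose principal symbol, by \eqref{e-gue171026}, coincides with that of the tangential Kohn Laplacian $\Box^{(q)}_b$. Hence $\hat\Box^{(q)}_\beta$ is transversally elliptic in the $T$-direction with the same characteristic variety $\Sigma\subset T^*X\setminus 0$ as $\Box^{(q)}_b$, and Hodge theory identifies $\hat H^q_{\beta,m}(X)$ with $\ker \hat\Box^{(q)}_{\beta,m}$, where $\hat\Box^{(q)}_{\beta,m}$ denotes the restriction of $\hat\Box^{(q)}_\beta$ to the $m$-th Fourier component. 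Writing $N^{(j)}_{\beta,m}(\lambda)$ for the number of eigenvalues of $\hat\Box^{(j)}_{\beta,m}$ in $[0,\lambda]$, a standard exact-couple argument yields, for every $\lambda>0$,
\begin{equation*}
\sum_{j=0}^{q}(-1)^{q-j}\dim \hat H^j_{\beta,m}(X)\leq \sum_{j=0}^{q}(-1)^{q-j}N^{(j)}_{\beta,m}(\lambda),\qquad \dim \hat H^q_{\beta,m}(X)\leq N^{(q)}_{\beta,m}(\lambda).
\end{equation*}
The analytic core is then to establish, for a suitable spectral window $\lambda_m\to 0$, the asymptotic bound
\begin{equation*}
N^{(q)}_{\beta,m}(\lambda_m)\leq \frac{m^{n-1}}{(n-1)!(2\pi)^n}\int_{X(q)}(-1)^{n+q}(d\omega_0)^{n-1}\wedge\omega_0+o(m^{n-1}),
\end{equation*}
obtained by computing the on-diagonal of the localized spectral projector via a parametrix along $\Sigma$; after rescaling by $m^{-1}$ the model operator is a fibrewise harmonic oscillator driven by the Levi form, whose negative part selects $X(q)$ and whose symplectic volume on $\Sigma$ produces the density $(d\omega_0)^{n-1}\wedge\omega_0/(n-1)!$.

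The principal obstacle is that $\hat\ddbar_\beta$ is genuinely pseudodifferential, while the parametrix and off-diagonal estimates in \cite{HL15} are written for the differential operator $\ddbar_b$. One must verify that the order-zero discrepancy $\hat\ddbar_\beta-\ddbar_b$ affects only the subprincipal symbol and therefore modifies $N^{(q)}_{\beta,m}(\lambda_m)$ by at most $O(m^{n-2})$. This requires redoing the microlocal analysis of \cite{HL15} within a pseudodifferential / Heisenberg-type calculus on a conic neighbourhood of $\Sigma$, reusing the $S^1$-equivariant parametrix for $\hat\ddbar_\beta$ constructed in Section~\ref{s-gue171010}, and checking that the cancellations in the Demailly-type telescoping proceed exactly as in the differential case. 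Once this is in place, \eqref{e-gue170930III} follows from the alternating spectral bound above and \eqref{e-gue170930II} is the corresponding single-degree estimate.
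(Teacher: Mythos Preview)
Your overall architecture is correct and matches the paper: reduce to the boundary via the identification $H^q_m(\ol M)\cong\ker\hat\Box^{(q)}_{\beta,m}$ for $m\gg1$, study the spectral projector on low-lying eigenspaces of $\hat\Box^{(q)}_{\beta,m}$, and feed the resulting asymptotics into the Demailly-type telescoping argument. Where you diverge is in the technical step handling the fact that $\hat\ddbar_\beta$ is pseudodifferential rather than differential.

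You propose to redo the parametrix construction of \cite{HL15} in a Heisenberg-type pseudodifferential calculus, arguing that the order-zero remainder $\hat\ddbar_\beta-\ddbar_b$ perturbs only the subprincipal level and hence shifts the eigenvalue count by $O(m^{n-2})$. That is plausible but costly, and the paper avoids it entirely. Instead of building a new parametrix, the paper proves elementary but sharp norm comparison inequalities (Theorems~\ref{t-gue171019I} and~\ref{t-gue171019II}): for every $k$ there is $C_k$ independent of $m$ with
\[
\Bigl\|(m^{-1}\Box^{(q)}_b)^k u\Bigr\|_X^2 \le C_k\Bigl(\sum_{\ell=0}^{2k-1}\bigl\|(m^{-1}\hat\Box^{(q)}_\beta)^\ell u\bigr\|_X\Bigr)\Bigl(\bigl\|(m^{-1}\hat\Box^{(q)}_\beta)u\bigr\|_X+m^{-1/2}\|u\|_X\Bigr),
\]
and symmetrically with $\Box^{(q)}_b$ and $\hat\Box^{(q)}_\beta$ interchanged. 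These follow from Kohn's estimate \eqref{e-gue171016j} together with the commutator observation that $[\hat\Box^{(q)}_\beta,P]$ drops one order whenever $P$ is classical, since the principal symbol of $\hat\Box^{(q)}_\beta$ is scalar. With these comparisons in hand, the paper applies the scaling technique of \cite{HL15} \emph{unchanged} to the differential operator $\Box^{(q)}_b$: for $u\in\ker\hat\Box^{(q)}_{\beta,m}$ one passes to BRT coordinates, rescales, and uses \eqref{e-gue171019y} to show that all powers of the scaled differential Laplacian $\Box^{(q)}_{(m)}$ acting on the rescaled section stay bounded (and the first power goes to zero), so Theorem~\ref{t-gue171020w} gives the pointwise bound on $\hat\Pi^{(q)}_{\beta,m}(x)$ directly.

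In short, your route would work but requires a genuine extension of the microlocal machinery; the paper's route sidesteps this by reducing everything to $L^2$ estimates that transfer between the two Laplacians, so that only the existing differential-operator analysis of \cite{HL15} is needed.
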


In Berman's work~\cite{Be05}, he considered a holomorphic line bundle $L$ over a complex manifold with boundary and he studied large $k$ behaviour of the $q$-th Dolbeault cohomology group with values of $L^k$ under condition $Z(q)$. He generalized Demailly's holomorphic Morse inequalities  to complex manifolds with boundary under condition $Z(q)$. Here condition $Z(q)$ is a sufficient condition such that the $q$-th Dolbeault cohomology group is finite dimensional. In this work, we do not consider line bundles but we study circle equivariant holomorphic Morse inequalities and 
we do not need condition $Z(q)$ since the Fourier components $H^q_m(\overline M)$ of $q$-th Dolbeault cohomology are always finite dimensional (see Theorem \ref{t-gue171008pm}). It is also quite interesting to study circle equivariant version of Berman's work.

When $q=1$, from Theorem ~\ref{t-gue170930} we have
\begin{equation}\label{e1-171109}
\begin{split}
&-{\rm dim\,}H^0_m(\overline M)+{\rm dim\,}H^1_m(\overline M)\leq\\ &\frac{m^{n-1}}{(n-1)!(2\pi)^n}\left(-\int_{X(0)}(-1)^{n}(d\omega_0)^{n-1}\wedge\omega_0
+\int_{X(1)}(-1)^{n+1}(d\omega_0)^{n-1}\wedge\omega_0\right)+o(m^{n-1}).
\end{split}
\end{equation}

\begin{theorem}\label{t-gue180817}
With the notations and assumptions above and we assume that $M$ is a weakly pseudoconvex manifold and strongly pseudoconvex at a point. Then ${\rm dim}~H^0_{m}(\overline M)\approx m^{n-1}$  as $m\rightarrow\infty$. Moreover, ${\rm dim}H^0(\overline M)=\infty.$
\end{theorem}

For the definition of weakly pseudoconvex manifold please turn to Definition \ref{def-111217}.
If we set $$\int_{X(\leq 1)}(-1)^n(d\omega_0)^{n-1}\wedge\omega_0=\int_{X(0)}(-1)^n(d\omega_0)^{n-1}\wedge\omega_0
-\int_{X(1)}(-1)^n(d\omega_0)^{n-1}\wedge\omega_0
$$ and assume that $M$ is not always weakly pseudoconvex but that the integral $$\int_{X(\leq 1)}(-1)^n(d\omega_0)^{n-1}\wedge \omega_0>0,$$ then by (\ref{e1-171109}) we still get many holomorphic functions on $M$ which are smooth up to the boundary.

\begin{theorem}
With the same notations and assumptions as in Theorem ~\ref{t-gue170930} and we assume that
\begin{equation}
\int_{X(\leq 1)}(-1)^n(d\omega_0)^{n-1}\wedge \omega_0>0.
\end{equation}
Then ${\rm dim}H^0_m(\overline M)\approx m^{n-1}$. Moreover, ${\rm dim}H^0(\overline M)=\infty.$
\end{theorem}

Theorem~\ref{t-gue180817} is closely related to the classical Levi problem in several complex variables. Let us recall some background about the classical Levi problem.
 Let $M\subset \mathbb C^n$ be a relatively compact domain with connected smooth boundary.
In 1911, Levi \cite{Le11} proved that if $M$ is a holomorphic convex domain, then $M$ is a weakly pseudoconvex domain.  Then Levi conjectured that the converse is true.
The Levi conjecture is confirmed by Oka \cite{Oka42} in $\mathbb C^2$, Oka \cite{Oka53},  Bremermann \cite{Br54}, Norguet \cite{No54} in general $\mathbb C^n$, $n>2$. We can also ask similar question on general complex manifolds.  Grauert \cite{Gr58} proved that the Levi conjecture is still true when $M$ is a strongly pseudoconvex complex manifold.
In 1958, Grauert gave a counterexample to show that the Levi conjecture need not to be true for weakly pseudoconvex domain in general complex manifolds. Then, Narasimhan \cite{Na62} asked that if $M$ is weakly pseudoconvex and strongly pseudoconvex at point, is $M$ holomorphic convex?
Grauert \cite{Gr63} constructed an example to give a negative answer to Narasimhan's problem. Later, Michel \cite{Mi76}  gave an affirmative answer to Narasimhan's question in the case of a compact homogeneous manifold.
Narasimhan's problem plays an important role in several complex variables and is closely related to the following problem: on what kind of weakly pseudoconvex complex manifold $M$ can we have ${\rm dim\,}H^0(\ol M)=\infty$? Theorem~\ref{t-gue180817} offers a class of weakly pseudoconvex complex manifolds  such that ${\rm dim\,}H^0(\ol M)=\infty$.

Let $E$ be a $S^1$-invariant holomorphic vector bundle over $M'$. As \eqref{e-gue170929IId}, we can define $H^q_m(\ol M,E)$ the $m$-th Fourier component of
the $q$-th Dolbeault cohomology group with values in $E$. We can repeat the proofs of Theorem~\ref{t-gue171002} and Theorem~\ref{t-gue170930} with minor changes and deduce

\begin{theorem}\label{t-gue171027}
With the notations and assumptions above, let $E$ be an $S^1$-invariant holomorphic vector bundle over $M'$ of complex rank $r$. There is an $m_0>0$ such that for every $m\in\mathbb Z$ with $m\geq m_0$, we have
\begin{equation}\label{e-gue171027}
\sum_{j=0}^{n-1}(-1)^j{\rm dim\,}H^{j}_{m}(\ol M, E)=\frac{1}{2\pi}\int_X \mathrm{Td_b\,}(T^{1,0}X) \wedge \mathrm{ch_b\,}(E)\wedge e^{-m\frac{d\omega_0}{2\pi}} \wedge(-\omega_0),
\end{equation}
where $\mathrm{ch_b\,}(E)$ denotes the tangential Chern character of
$E$ (see Definition~\ref{d-gue150516}).
\end{theorem}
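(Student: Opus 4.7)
The proof follows the blueprint laid out for Theorem~\ref{t-gue171002} and Theorem~\ref{t-gue170930}, with the scalar bundle $\mathbb C$ replaced throughout by the $S^1$-invariant holomorphic vector bundle $E$. The plan is to fix an $S^1$-invariant Hermitian metric on $E$ (which exists by averaging any Hermitian metric over $S^1$), form the twisted Dolbeault complex $\ddbar_E\colon\Omega^{0,q}(\ol M,E)\to\Omega^{0,q+1}(\ol M,E)$ and the twisted $\ddbar$-Neumann Laplacian $\Box^{(q)}_E$, and then check that every step used in the untwisted proof is functorial in $E$.

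First I would verify that $\Box^{(q)}_E$ is again transversally elliptic with respect to the $S^1$-action: this is a local symbol computation and the principal symbol of $\Box^{(q)}_E$ is just that of $\Box^{(q)}$ tensored with $\mathrm{Id}_E$, so the characteristic variety is unchanged. Consequently the finiteness result ${\rm dim\,}H^q_m(\ol M,E)<\infty$ for each $m\in\mathbb Z$ and each $q$ goes through verbatim. Next, I would construct a twisted boundary operator $\hat\ddbar_{\beta,E}\colon\Omega^{0,q}(X,E|_X)\to\Omega^{0,q+1}(X,E|_X)$ by coupling the construction of $\hat\ddbar_\beta$ with a connection on $E|_X$; since everything is local, one obtains a classical pseudodifferential operator of order $1$ satisfying $\hat\ddbar_{\beta,E}=\ddbar_{b,E}+(\text{lower order})$ and $\hat\ddbar_{\beta,E}^2=0$. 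The $S^1$-invariance of $E$ ensures that $\hat\ddbar_{\beta,E}$ commutes with the $S^1$-action, so the cohomology groups $\hat H^q_{\beta,m}(X,E|_X)$ are well-defined.

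The core step is then to upgrade the reduction to the boundary \eqref{e-gue171026I} to the twisted isomorphism
\begin{equation*}
H^q_m(\ol M,E)\cong\hat H^q_{\beta,m}(X,E|_X)\qquad\text{for all }q=0,1,\ldots,n-1\text{ and all }m\gg1.
\end{equation*}
This is the point at which I would have to be most careful: the Poisson operator, the Calder\'on projector and the associated parametrices used in Section~\ref{s-gue171010} must all be extended to act on $E$-valued forms. Since these constructions are built out of pseudodifferential calculus with operator-valued symbols, tensoring by $\mathrm{Id}_E$ preserves the transmission property, the asymptotic expansions and the $S^1$-invariance. Once this is in place, combining the isomorphism with \eqref{e-gue171026} (twisted by $E$) and the homotopy invariance of the index for transversally elliptic operators yields
\begin{equation*}
\sum_{j=0}^{n-1}(-1)^j{\rm dim\,}H^j_m(\ol M,E)=\sum_{j=0}^{n-1}(-1)^j{\rm dim\,}H^j_{b,m}(X,E|_X)\qquad\text{for all }m\gg1,
\end{equation*}
where $H^j_{b,m}(X,E|_X)$ is the $m$-th Fourier component of the $j$-th Kohn--Rossi cohomology with values in $E|_X$. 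Kohn's estimate, again applied to the twisted Laplacian, shows $H^n_m(\ol M,E)=0$ for $m\gg1$, so the sum over $j=0,\ldots,n-1$ coincides with the full Euler characteristic.

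Finally, I would invoke the twisted version of the CR index formula established in~\cite{CHT}, which states
\begin{equation*}
\sum_{j=0}^{n-1}(-1)^j{\rm dim\,}H^j_{b,m}(X,E|_X)=\frac{1}{2\pi}\int_X\mathrm{Td_b\,}(T^{1,0}X)\wedge\mathrm{ch_b\,}(E)\wedge e^{-m\frac{d\omega_0}{2\pi}}\wedge(-\omega_0),
\end{equation*}
to conclude \eqref{e-gue171027}. The main obstacle, as indicated above, is the bookkeeping for the reduction to the boundary in the bundle-valued setting; every other ingredient is obtained from the scalar case by tensoring the local constructions with $\mathrm{Id}_E$ and checking $S^1$-equivariance, which introduces no new analytic difficulty.
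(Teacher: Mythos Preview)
Your proposal is correct and follows exactly the approach indicated in the paper, which simply states that one ``can repeat the proofs of Theorem~\ref{t-gue171002} and Theorem~\ref{t-gue170930} with minor changes'' in the bundle-valued setting. You have accurately unpacked what those minor changes are: tensoring the $\ddbar$-Neumann Laplacian, the Poisson operator, the projection $Q$, and the operators $\ddbar_\beta$, $\hat\ddbar_\beta$ by $\mathrm{Id}_E$, checking that $S^1$-equivariance persists, carrying the reduction-to-boundary isomorphism of Section~\ref{s-gue171010} to $E$-valued forms, and then invoking the homotopy invariance and the twisted CR index formula from~\cite{CHT}.
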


\begin{theorem}\label{t-gue171027I}
With the notations and assumptions above, let $E$ be an $S^1$-invariant holomorphic vector bundle over $M'$ of complex rank $r$. For every $q=0,1,\ldots,n-1$, we have
\begin{equation}\label{e-gue171027I}
{\rm dim\,}H^q_{m}(\ol M,E)\leq r\frac{m^{n-1}}{(n-1)!(2\pi)^n}\int_{X(q)}(-1)^{n+q}(d\omega_0)^{n-1}\wedge\omega_0+o(m^{n-1}),\ \ m\in\mathbb N,
\end{equation}
and
\begin{equation}\label{e-gue171027II}
\begin{split}
&\sum_{j=0}^q(-1)^{q-j}{\rm dim\,}H^j_m(\overline M,E)\\
&\leq r\frac{m^{n-1}}{(n-1)!(2\pi)^n}\sum_{j=0}^q(-1)^{q-j}\int_{X(j)}(-1)^{n+j}(d\omega_0)^{n-1}\wedge\omega_0+o(m^{n-1}),\ \ m\in\mathbb N.
\end{split}
\end{equation}
\end{theorem}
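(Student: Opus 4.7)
The plan is to mimic the proof of Theorem~\ref{t-gue170930} with the $E$-coefficients incorporated at each step. To begin, fix an $S^1$-invariant Hermitian metric on $E$ (obtained by averaging any Hermitian metric over the compact group $S^1$), and form the associated $\ddbar$-Neumann Laplacian $\Box_E^{(q)}$ acting on $L^2_{(0,q)}(M,E)$. Since transversal ellipticity is a microlocal, principal-symbol statement and $E$ is trivializable in local charts, the argument of Section~\ref{s-gue171006} carries over verbatim to show that $\Box_E^{(q)}$ is transversally elliptic and that $\dim H^q_m(\overline{M},E)<\infty$ for every $m\in\mathbb{Z}$, $q=0,\ldots,n$.

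Next, I would extend the construction of $\hat\ddbar_\beta$ in Section~\ref{s-gue171010} to a classical pseudodifferential operator
\[
\hat\ddbar_{\beta,E}\colon \Omega^{0,q}(X,E|_X)\to \Omega^{0,q+1}(X,E|_X),
\]
satisfying $\hat\ddbar_{\beta,E}^2=0$, $\hat\ddbar_{\beta,E}=\ddbar_{b,E}+\text{lower order}$, and commuting with the $S^1$-action. The reduction-to-the-boundary technique of Section~\ref{s-gue171010} is local near $X$, so trivializing $E$ in a tubular neighborhood of $X$ and patching with a partition of unity produces (for $m$ large enough) an isomorphism
\[
H^q_m(\overline{M},E)\cong \hat H^q_{\beta,m}(X,E|_X),\qquad q=0,1,\ldots,n-1,
\]
which is the $E$-twisted analogue of \eqref{e-gue171026I}.

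Third, I would carry through the generalization in Section~\ref{s-gue171021} of the \cite{HL15} heat-kernel/semiclassical technique to the pseudodifferential operator $\hat\ddbar_{\beta,E}$. The key input is the $m\to\infty$ asymptotic expansion of the spectral projector $\Pi_{\leq m^{-N_0},m}^{(q)}$ of the associated Kohn Laplacian on the $m$-th Fourier component, with coefficients in $E|_X$. Because $E|_X$ carries an $S^1$-invariant structure and the leading symbol of $\hat\ddbar_{\beta,E}$ acts as the identity on the $E$-factor, this expansion reproduces the scalar leading term times the fiberwise trace $\operatorname{tr}_{E|_X}\mathrm{Id}=r$. Hence the weak and strong inequalities \eqref{e-gue170930II}, \eqref{e-gue170930III} for $\dim \hat H^q_{\beta,m}(X,E|_X)$ pick up exactly the factor of $r$ appearing in \eqref{e-gue171027I}, \eqref{e-gue171027II}.

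The main obstacle is the careful verification that the reduction-to-the-boundary step, which in Section~\ref{s-gue171010} hinges on delicate microlocal calculus near the boundary (the Calder\'on projector, the Kohn--Rossi parametrix, and the commutation with $T$), remains valid when one tensors by the bundle $E$. The symbolic calculations go through because $E$ may be trivialized locally; the nontrivial check is that the operators obtained in different trivializations patch together into a globally defined pseudodifferential complex with the required $S^1$-equivariance. Once this is done, the index-homotopy argument and the semiclassical asymptotics of~\cite{HL15} apply word-for-word, and Theorem~\ref{t-gue171027I} follows.
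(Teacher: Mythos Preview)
Your proposal is correct and follows the same approach as the paper: the paper does not give a separate proof of Theorem~\ref{t-gue171027I} at all, but simply asserts (just before the statement) that one can ``repeat the proofs of Theorem~\ref{t-gue171002} and Theorem~\ref{t-gue170930} with minor changes,'' which is precisely the programme you outline. Your sketch is in fact more explicit than the paper about where the factor $r$ arises (as the fiberwise trace on $E|_X$) and about which steps need rechecking when one twists by $E$; the only cosmetic discrepancy is that the paper's boundary reduction in Section~\ref{s-gue171010} is phrased via the Poisson operator $\tilde P$ and the operator $Q$ rather than in the Calder\'on-projector language you use, and the asymptotics in Section~\ref{s-gue171021} are obtained by a scaling argument rather than a heat-kernel expansion, but the content is the same.
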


\begin{corollary}
With the same notations and assumptions as in Theorem~\ref{t-gue171027I} and we assume that the integral
\begin{equation}
\int_{X(\leq 1)}(-1)^n(d\omega_0)^{n-1}\wedge \omega_0>0.
\end{equation}
Then ${\rm dim\,}H^0_m(\overline M, E)\approx rm^{n-1}$ as $m\rightarrow\infty$. Moreover, ${\rm dim\,}H^0(\overline M, E)=\infty.$
\end{corollary}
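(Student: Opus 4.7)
The plan is to combine the strong Morse inequality of Theorem~\ref{t-gue171027I} at $q=1$ (which will furnish a lower bound on $\dim H^0_m(\ol M,E)$) with the weak Morse inequality at $q=0$ (which will supply the matching upper bound), and then derive the infinite-dimensionality of $H^0(\ol M,E)$ from the Fourier decomposition along the $S^1$-action.

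For the lower bound, I will specialize \eqref{e-gue171027II} at $q=1$. Collecting the two terms and using $(-1)^{n+1}=-(-1)^n$, the right-hand side can be rewritten as
\begin{equation*}
-\,r\,\frac{m^{n-1}}{(n-1)!(2\pi)^n}\int_{X(\leq 1)}(-1)^n(d\omega_0)^{n-1}\wedge\omega_0+o(m^{n-1})
\end{equation*}
in the notation introduced just before the corollary. Since this integral is strictly positive by hypothesis, it yields
\begin{equation*}
\dim H^0_m(\ol M,E)\;\geq\;\dim H^1_m(\ol M,E)+c\,r\,m^{n-1}\;\geq\;c\,r\,m^{n-1}
\end{equation*}
for some constant $c>0$ and all $m$ sufficiently large; this is the desired lower bound of order $rm^{n-1}$. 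The matching upper bound of the same order follows at once from the weak inequality \eqref{e-gue171027I} at $q=0$, whose leading coefficient must be non-negative (otherwise it would contradict the lower bound just established). Together these inequalities give $\dim H^0_m(\ol M,E)\approx rm^{n-1}$.

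For the unbounded statement, I will use that $T$ commutes with $\ddbar$, so each class in $H^0(\ol M,E)$ decomposes into Fourier components lying in $H^0_m(\ol M,E)$, and conversely each $H^0_m(\ol M,E)$ embeds in $H^0(\ol M,E)$. Classes from distinct Fourier modes are linearly independent as $T$-eigensections with different eigenvalues, so since $H^0_m(\ol M,E)\neq 0$ for all large $m$ by the lower bound just established, we conclude $\dim H^0(\ol M,E)=\infty$.

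The only delicate point will be the sign bookkeeping when rewriting the right-hand side of \eqref{e-gue171027II} at $q=1$ so that it matches the sign convention used to define $X(\leq 1)$ in \eqref{e1-171109}; once that is checked, the corollary is an immediate consequence of Theorem~\ref{t-gue171027I} combined with the Fourier decomposition argument above.
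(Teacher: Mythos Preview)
Your proposal is correct and follows exactly the approach the paper intends: the corollary is stated without proof immediately after the definition of $X(\leq 1)$ and the display \eqref{e1-171109}, and the implicit argument is precisely the one you spell out---apply the strong inequality \eqref{e-gue171027II} at $q=1$ to get the lower bound of order $rm^{n-1}$, then the weak inequality \eqref{e-gue171027I} at $q=0$ for the upper bound, and deduce $\dim H^0(\ol M,E)=\infty$ from the Fourier decomposition. One small cosmetic point: your parenthetical that the leading coefficient in \eqref{e-gue171027I} at $q=0$ ``must be non-negative (otherwise it would contradict the lower bound)'' is unnecessary, since $\int_{X(0)}(-1)^n(d\omega_0)^{n-1}\wedge\omega_0$ is automatically non-negative (on $X(0)$ the integrand is a positive multiple of the volume form); what your lower bound actually forces is that this coefficient is \emph{strictly} positive, which is what you need for the two-sided estimate $\approx rm^{n-1}$.
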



For the case when $m<0$, we have the following
\begin{theorem}\label{t-gue2001}
With the notations and assumptions used in Theorem~\ref{t-gue171002}, there is an
$m_1\in\mathbb{N}$ such that for every $m\in\mathbb{Z}, m<0, |m|>m_1$, we have
$\dim H_m^q(\ol M)=0$, for every $q=0,1,...,n-1$.
\end{theorem}

In the end of this section, we give some simple examples.

(I)\, Consider a compact complex manifold $\Omega$ and let $
(L,h^{L})\rightarrow \Omega$ be a holomorphic line bundle over $\Omega$, where $h^{L}$
denotes a Hermitian fiber metric of $L$. Let $(L^{\ast },h^{L^{\ast
}})\rightarrow\Omega$ be the dual bundle of $(L,h^{L})$ and put $M'=L^\ast$ and let $M=\left\{ v\in
L^{\ast };\,\left\vert v\right\vert _{h^{L^{\ast }}}^{2}<1\right\} $.
Clearly $M'$ is
equipped with a natural (globally free) $S^1$-action $e^{i\varphi}$ (by acting on the circular fiber) and the $S^1$-action satisfies Assumption~\ref{a-gue170929}, Assumption~\ref{a-gue170929I} and \eqref{e-gue171029}.

Assume that $\Omega$ admits a holomorphic torus action \(T^d\curvearrowright\Omega\) denoted by $(e^{i\varphi_1},\ldots,e^{i\varphi_d})$.
The torus action $T^d$ and the $S^1$-action $e^{i\varphi}$ induce a natural $S^1$-action on $M'$ given by $e^{i\theta}\circ (z,\lambda)=((e^{i\varphi_1},\ldots,e^{i\varphi_d})\circ z, e^{i\varphi}\circ \lambda)$, where $\varphi=\varphi_1=\cdots=\varphi_d=\theta$, $z$ denotes the coordinates of $\Omega$ and $\lambda$ denotes the fiber coordinate of $L^\ast$. It is easy to see that this $S^1$-action satisfies Assumption~\ref{a-gue170929}, Assumption~\ref{a-gue170929I} and \eqref{e-gue171029}. 

Since $M'$ admits a holomorphic torus action, it is a natural question to study the space of torus equivariant holomorphic sections 
\[\begin{split}
&H^0_{\mu_1,\ldots,\mu_d,\mu_{d+1}}(\ol M)=\{u\in H^0(\ol M);\, (e^{i\varphi_1},\ldots,e^{i\varphi_{d+1}})^*u=e^{i\mu_1\varphi_1+\cdots+i\mu_{d+1}\varphi_{d+1}}u,\\ 
&\quad\mbox{for every $(e^{i\varphi_1},\ldots,e^{i\varphi_d},e^{i\varphi_{d+1}})\in T^d\times S^1$}\},\end{split}\]
where $(\mu_1,\ldots,\mu_d,\mu_{d+1})\in\mathbb Z^{d+1}$. To study such kind of question, one usually needs to introduce momentum map associated to the torus action 
and one needs more conditions about the torus action. On the other hand, if we looks at the diagonal action as above, the space $H^0_m(\ol M)$ is equal to the space 
\[\oplus_{(\mu_1,\ldots,\mu_d,\mu_{d+1})\in\mathbb Z^{d+1}, \mu_1+\cdots+\mu_{d+1}=m}H^0_{\mu_1,\ldots,\mu_d,\mu_{d+1}}(\ol M)\]
and we can get many  torus equivariant holomorphic sections if the space $H^0_m(\ol M)$ is large. Moreover, we can modify the diagonal action: 
Let $p_1\in\mathbb N,\ldots,p_{d+1}\in\mathbb N$ and assume that the greatest common divisor of $(p_1,\ldots,p_{d+1})$ is one. Consider the modified diagonal $S^1$ action:
\[e^{i\theta}\circ (z,\lambda)=((e^{ip_1\theta},\ldots,e^{ip_d\theta})\circ z, e^{ip_{d+1}\theta}\circ \lambda).\]
The asymptotic behaviour of the space $H^0_m(\ol M)$ with respect to the modified diagonal $S^1$ action 
has some interesting applications in number theory (see Example (III) below). 

(II)\,  Let $V\subset\mathbb C^m$ be a complex space with an isolated singularity $0\in V$. 
Let $e^{i\theta}$ be the standard $S^1$-action on $\mathbb C^m$ given by 
$e^{i\theta}\circ(z_1, \ldots, z_m):=(e^{i\theta}z_1, \ldots, e^{i\theta}z_m)$, for every $e^{i\theta}\in S^1$ and every $(z_1,\ldots,z_m)\in\mathbb C^m$. 
Assume that $V$ is symmetric with respect to the standard $S^1$-action $e^{i\theta}$ on $\mathbb C^m$. That is, if $(z_1, \ldots, z_m)\in V$, 
then $e^{i\theta}\circ(z_1, \cdots, z_m)=(e^{i\theta}z_1, \ldots, e^{i\theta}z_m)\in V$, for every $e^{i\theta}\in S^1$. Then, the $S^1$-action $e^{i\theta}$ acts on $V$. We suppose further that $0$ is the unique fixed point of the $S^1$-action $e^{i\theta}$.
Let $\mathbb B_r\subset\mathbb C^m$ be a ball centered at $0$ with radius $r$. Let $V_r=V\cap \mathbb B_r$. Suppose $\mathbb B_r$ intersects $V$ transversally. By the Sard theorem, for almost all $0<r\ll1$, the boundary of $V_r$ denoted by $X_r$ is a smooth strongly pseudoconvex CR manifold. The $X_r's$ are called CR links of the germ $(V, 0)$. It is an useful approach to study the singularity through studying its CR links (see \cite{H06}). Assume  the  holomorphic $S^1$-action on $V$ induces  a transversal  $S^1$-action on $X_r$.  Let $\pi: \tilde V_r\rightarrow V_r$ be a desingularization. The $S^1$ action on $V_r$ lifts to a holomorphic $S^1$ action on $\tilde V_r$. Thus, $\tilde V_r$ is a complex manifold which admits a holomorphic $S^1$ action which preserves the boundary $X_r$ and the action restricted on the boundary of $\tilde V_r$ is transversal and CR.

(III) Let $M=\mathbb B^N:=\set{(z_1,\ldots,z_N)\in\mathbb C^N;\, \abs{z_1}^2+\cdots+\abs{z_N}^2<1}$, $N\geq2$.  
Let $p_1\in\mathbb N,\ldots,p_{N}\in\mathbb N$. Assume that the greatest common divisor of $(p_1,\ldots,p_N)$ is equal to one. The $S^1$-action on $M$ is given by 
\[e^{i\theta}\circ(z_1,\ldots,z_N)=(e^{ip_1\theta}z_1, \ldots,e^{ip_N\theta}z_N).\] 
Then it is easy to see that for every $m\in\mathbb Z$,
\begin{equation*}
\begin{split}
H^0_m(\ol M)&=\text{Span}\{z_1^{\alpha_1}\cdots z^{\alpha_N}_N;\, p_1\alpha_1+\cdots+p_{N}\alpha_{N}=m, (\alpha_1,\ldots,\alpha_N)\in\mathbb N^N_0  \},\, m\geq 0,\\
H^0_m(\ol M)&=0, \forall m\in\mathbb{Z}, m<0.
\end{split}
\end{equation*}
Since the boundary of $M$ is strongly pseudoconvex, $H^q(\ol M)$ is finite dimensional, for every $q=1,\ldots,n-1$. 
As a consequence $H_m^q(\overline M)=0$ for all $m\in\mathbb Z$, $\abs{m}\gg1$, $q=1,\ldots,n-1$. However, when $m\ll -1$, by a similar argument as in the proofs of \eqref{e-gue171023I} and \cite[Theorem 2.10]{HL15} we have ${\rm dim}\hat H^{n-1}_{\beta, m}(X)\approx |m|^{N-1}$ when $m\rightarrow -\infty$. Hence \eqref{e-gue171026I} is not true in general when $m\ll -1$.  Moreover, the dimension of $H^0_m(\ol M)$ is equal to the number of non-negative integral solutions $(x_1,\ldots,x_N)$ of 
the equation
\[x_1p_1+\cdots+x_{N}p_{N}=m.\]
Then, from our main result (see Theorem~\ref{t-gue180817}), we see that the number of non-negative integral solutions $(x_1,\ldots,x_N)$ of the equation above is $\approx m^{N-1}$ if $m\gg1$. Furthermore, from the index formula Theorem~\ref{t-gue171002} and vanishing results for $H_m^q(\overline M)$, 
$q=1,\ldots,n-1$, $m\To+\infty$, we can get precise formula for the number of non-negative integral solutions $(x_1,\ldots,x_N)$ of the equation above when $m\gg1$.


\section{Preliminaries}\label{s:prelim}

\subsection{Some standard notations}\label{s-gue150508b}
We use the following notations: $\mathbb N=\set{1,2,\ldots}$,
$\mathbb N_0=\mathbb N\cup\set{0}$, $\Real$
is the set of real numbers, $\ol\Real_+:=\set{x\in\Real;\, x\geq0}$.
For a multiindex $\alpha=(\alpha_1,\ldots,\alpha_m)\in\mathbb N_0^m$,
we set $\abs{\alpha}=\alpha_1+\cdots+\alpha_m$. For $x=(x_1,\ldots,x_m)\in\Real^m$ we write
\[
\begin{split}
&x^\alpha=x_1^{\alpha_1}\ldots x^{\alpha_m}_m,\quad
 \pr_{x_j}=\frac{\pr}{\pr x_j}\,,\quad
\pr^\alpha_x=\pr^{\alpha_1}_{x_1}\ldots\pr^{\alpha_m}_{x_m}=\frac{\pr^{\abs{\alpha}}}{\pr x^\alpha}\,,\\
&D_{x_j}=\frac{1}{i}\pr_{x_j}\,,\quad D^\alpha_x=D^{\alpha_1}_{x_1}\ldots D^{\alpha_m}_{x_m}\,,
\quad D_x=\frac{1}{i}\pr_x\,.
\end{split}
\]
Let $z=(z_1,\ldots,z_m)$, $z_j=x_{2j-1}+ix_{2j}$, $j=1,\ldots,m$, be coordinates of $\Complex^m$,
where
$x=(x_1,\ldots,x_{2m})\in\Real^{2m}$ are coordinates in $\Real^{2m}$.
We write
\[
\begin{split}
&z^\alpha=z_1^{\alpha_1}\ldots z^{\alpha_m}_m\,,\quad\ol z^\alpha=\ol z_1^{\alpha_1}\ldots\ol z^{\alpha_m}_m\,,\\
&\pr_{z_j}=\frac{\pr}{\pr z_j}=
\frac{1}{2}\Big(\frac{\pr}{\pr x_{2j-1}}-i\frac{\pr}{\pr x_{2j}}\Big)\,,\quad\pr_{\ol z_j}=
\frac{\pr}{\pr\ol z_j}=\frac{1}{2}\Big(\frac{\pr}{\pr x_{2j-1}}+i\frac{\pr}{\pr x_{2j}}\Big),\\
&\pr^\alpha_z=\pr^{\alpha_1}_{z_1}\ldots\pr^{\alpha_m}_{z_m}=\frac{\pr^{\abs{\alpha}}}{\pr z^\alpha}\,,\quad
\pr^\alpha_{\ol z}=\pr^{\alpha_1}_{\ol z_1}\ldots\pr^{\alpha_m}_{\ol z_m}=
\frac{\pr^{\abs{\alpha}}}{\pr\ol z^\alpha}\,.
\end{split}
\]

Let $\Omega$ be a $C^\infty$ orientable paracompact manifold.
We let $T\Omega$ and $T^*\Omega$ denote the tangent bundle of $\Omega$ and the cotangent bundle of $\Omega$ respectively.
The complexified tangent bundle of $\Omega$ and the complexified cotangent bundle of $\Omega$
will be denoted by $\Complex T\Omega$ and $\Complex T^*\Omega$ respectively. We write $\langle\,\cdot\,,\cdot\,\rangle$
to denote the pointwise duality between $T\Omega$ and $T^*\Omega$.
We extend $\langle\,\cdot\,,\cdot\,\rangle$ bilinearly to $\Complex T\Omega\times\Complex T^*\Omega$.

Let $E$ be a $C^\infty$ complex vector bundle over $\Omega$. The fiber of $E$ at $x\in\Omega$ will be denoted by $E_x$.
Let $F$ be another vector bundle over $\Omega$. We write
$F\boxtimes E^*$ to denote the vector bundle over $\Omega\times\Omega$ with fiber over $(x, y)\in\Omega\times\Omega$
consisting of the linear maps from $E_y$ to $F_x$.

Let $Y\subset \Omega$ be an open set. The spaces of smooth sections of $E$ over $Y$ and distribution sections of $E$ over $Y$ will be denoted by $C^\infty(Y, E)$ and $\mathscr D'(Y, E)$ respectively.
Let $\mathscr E'(Y, E)$ be the subspace of $\mathscr D'(Y, E)$ whose elements have compact support in $Y$ and set 
$C^\infty_0(Y,E):=C^\infty(Y,E)\bigcap\mathscr E'(Y, E)$. Fix a volume form on $Y$ and a Hermitian metric on $E$, we get a 
natural $L^2$ inner product $(\,\cdot\,|\,\cdot\,)$ on $C^\infty_0(Y,E)$. Let $L^2(Y,E)$ be the completion of $C^\infty_0(Y,E)$ with respect 
to $(\,\cdot\,|\,\cdot\,)$ and the $L^2$ inner product $(\,\cdot\,|\,\cdot\,)$ can be extended to $L^2(Y,E)$ by density. Let $\norm{\cdot}$ be the $L^2$ norm corresponding to the $L^2$ inner product $(\,\cdot\,|\,\cdot\,)$. 
For every $s\in\mathbb R$, let $L_s: \mathscr D'(Y,E)\To\mathscr D'(Y,E)$ be a properly supported classical 
elliptic pseudodifferential operator of order $s$ on $Y$ with values in $E$. Define 
\[W^s(Y,E):=\set{u\in \mathscr D'(Y,E);\, L_s u\in L^2(Y,E)}\]
and for $u\in W^s(Y,E)$, let $\norm{u}_s:=\norm{L_s u}$. 
We call $W^s(Y, E)$ the Sobolev space of order $s$ of sections of $E$ over $Y$ (with respect to $L_s$) and for $u\in W^s(Y,E)$, we call the number $\norm{u}_s$ the Sobolev norm of $u$ of order $s$ (with respect to $L$).
Put
\begin{gather*}
W^s_{\rm loc\,}(Y, E)=\big\{u\in\mathscr D'(Y, E);\, \varphi u\in W^s(Y, E),
     \,\forall\varphi\in C^\infty_0(Y)\big\}\,,\\
      W^s_{\rm comp\,}(Y, E)=W^s_{\rm loc}(Y, E)\cap\mathscr E'(Y, E)\,.
\end{gather*}



\subsection{Set up}\label{s-gue171006q}

Let $M$ be a relatively compact open subset with smooth connected boundary $X$ of a complex manifold $M'$ of dimension $n$. Assume that $M'$ admits a holomorphic $S^{1}$-action $e^{i\theta}$, $\theta\in[0,2\pi]$: $e^{i\theta}: M'\to M'$, $x\in M'\To e^{i\theta}\circ x\in M'$. From now on, we will use the same assumptions and notations as in Section~\ref{s-gue170929}. Recall that we work with Assumption~\ref{a-gue170929}, Assumption~\ref{a-gue170929I} and \eqref{e-gue171029}. From now on, we fix a $S^1$-invariant Hermitian metric $\langle\,\cdot\,|\,\cdot\,\rangle$ on $\Complex TM'$ so that
\begin{equation}\label{e-gue171006a}
\begin{split}
&\mbox{$T\perp(T^{1, 0}X\oplus T^{0, 1}X)$ at every point of $X$},\\
&\mbox{$\langle\,T\,|\,T\,\rangle=1$ at every point of $X$}.
\end{split}
\end{equation}
The Hermitian metric $\langle\,\cdot\,|\,\cdot\,\rangle$ on $\Complex TM'$ induces by duality a Hermitian metric on $\Complex T^*M'$ and Hermitian metrics on $T^{*0,q}M'$ the bundle of $(0,q)$ forms on $M'$, $q=1,\ldots,n$. We shall also denote these Hermitian metrics by $\langle\,\cdot\,|\,\cdot\,\rangle$.
From now on, we fix a defining function $\rho\in C^\infty(M',\Real)$ of $X$ such that
\begin{equation}\label{e-gue171006aI}
\begin{split}
&\mbox{$\langle\,d\rho(x)\,|\,d\rho(x)\,\rangle=1$ on $X$},\\
&\rho(e^{i\theta}\circ x)=\rho(x),\ \ \forall x\in M',\ \ \forall\theta\in [0,2\pi[.
\end{split}\end{equation}
From the assumption (\ref{ass-111217}),  (\ref{e-gue171006a}) and (\ref{e-gue171006aI})  we have
\begin{equation}
T=J(\frac{\partial}{\partial \rho})~\text{on} ~X,
\end{equation}
where $\frac{\partial}{\partial\rho}$ is the dual vector field of $d\rho$ with respect to the given Hermitian metric on $M'$. 


\begin{definition}\label{def-111217}
$M$ is called weakly (strongly) pseudoconvex at $x\in X$ if $\mathcal L_x$ is positive semidefinite (definite) on $T_x^{1, 0}X$. If $\mathcal L_x$ is positive semidefinite at every point of $X$, then $M$ is called a weakly pseudoconvex  manifold.
\end{definition}

Fix $q=0,1,\ldots,n-1$. Let $\Omega^{0,q}_0(M')$  be the space of smooth $(0,q)$ forms on $M'$ whose elements have compact support in $M'$ and let $\Omega^{0,q}_0(M)$  be the space of smooth $(0,q)$ forms on $M$ whose elements have compact support in $M$.
Let $(\,\cdot\,|\,\cdot\,)_{M'}$ and $(\,\cdot\,|\,\cdot\,)_{M}$ be the $L^2$ inner products on $\Omega^{0,q}_0(M')$ and $\Omega^{0,q}_0(M)$ respectively given by
\begin{equation}\label{e-gue171006aIb}
\begin{split}
&(\,u\,|\,v\,)_{M'}:=\int_{M'}\langle\,u\,|\,v\,\rangle dv_{M'},\ \ u, v\in\Omega^{0,q}_0(M'),\\
&(\,u\,|\,v\,)_M:=\int_M\langle\,u\,|\,v\,\rangle dv_{M'},\ \ u, v\in\Omega^{0,q}_0(M),
\end{split}
\end{equation}
where $dv_{M'}$ is the volume form on $M'$ induced by $\langle\,\cdot\,|\,\cdot\,\rangle$. Let $L^2_{(0,q)}(M)$ be the $L^2$ completion of $\Omega^{0,q}_0(M)$ with respect to
$(\,\cdot\,|\,\cdot\,)_M$. It is clear that $\Omega^{0,q}(\ol M)\subset L^2_{(0,q)}(M)$. We extend $(\,\cdot\,|\,\cdot\,)_M$ to $L^2_{(0,q)}(M)$ in the standard way and let $\norm{\cdot}_{M}$ be the corresponding $L^2$ norm. Let $T^{*0,q}X$ be the bundle of $(0,q)$ forms on $X$. Recall that for every $x\in X$, we have
\[T^{*0,q}_xX:=\set{u\in T^{*0,q}_xM';\, \langle\,u\,|\,\ddbar\rho(x)\wedge g\,\rangle=0,\ \ \forall g\in T^{*0,q-1}_xM'}.\]
Let $\Omega^{0,q}(X)$ be the space of smooth $(0,q)$ forms on $X$.
Let $(\,\cdot\,|\,\cdot\,)_X$ be the $L^2$ inner product on $\Omega^{0,q}(X)$ given by
\begin{equation}\label{e-gue171006aIc}
(\,u\,|\,v\,)_X:=\int_X\langle\,u\,|\,v\,\rangle dv_X,
\end{equation}
where $dv_X$ is the volume form on $X$ induced by $\langle\,\cdot\,|\,\cdot\,\rangle$. Let $L^2_{(0,q)}(X)$ be the $L^2$ completion of $\Omega^{0,q}(X)$ with respect to
$(\,\cdot\,|\,\cdot\,)_X$. We extend $(\,\cdot\,|\,\cdot\,)_X$ to $L^2_{(0,q)}(X)$ in the standard way and let $\norm{\cdot}_X$ be the corresponding $L^2$ norm.

Fix $m\in\mathbb Z$. Let $\Omega^{0,q}_{m}(M')$ be as in \eqref{e-gue170929II} and let $\Omega^{0,q}_m(\ol M)$ denote the space of restrictions to $M$ of elements in $\Omega^{0,q}_m(M')$. Let $L^2_{(0,q),m}(M)$ be the completion of $\Omega^{0,q}_{m}(\ol M)$ with respect to $(\,\cdot\,|\,\cdot\,)_M$. Similarly, let
\begin{equation}\label{e-gue171006yc}
\Omega^{0,q}_m(X):=\{u\in\Omega^{0,q}(X);\, \mathcal L_Tu=imu\},
\end{equation}
where  $\mathcal L_Tu$ is the Lie derivative of $u$ along direction $T$. For convenience, we write $Tu:=\mathcal L_Tu$. Let $L^2_{(0,q),m}(X)$ be the completion of $\Omega^{0,q}_m(X)$ with respect to $(\,\cdot\,|\,\cdot\,)_X$.

Let $A$ be a $C^\infty$ complex vector bundle over $M'$. For every  $s\in\mathbb R$, fix $L_s: \mathscr D'(M',A)\To\mathscr D'(M',A)$  a properly supported classical 
elliptic pseudodifferential operator of order $s$ on $M'$ with values in $A$ and we define $W^s(M',A)$ the Sobolev space of order $s$ with values in $A$ over $M'$ (with respect to $L_s$) and for a $u\in W^s(M',A)$,  we define $\norm{u}_{s,M'}$ the Sobolev norm of $u$ of 
order $s$ (with respect to $L_s$ ) as in the discussion at the end of Section~\ref{s-gue150508b}. For every $s\in\mathbb R$, let 
\[
\begin{split}
&C^\infty(\ol M,A),\ \ \mathscr D'(\ol M,A),\\
W^s&(\ol M,A),\ \ H^s_{{\rm comp\,}}(\ol M,A),\ \ 
W^s_{{\rm loc\,}}(\ol M,A),
\end{split}
\]
(where $\ s\in\mathbb R$)
denote the spaces of restrictions to $\ol M$ of elements in 
\[
\begin{split}
C^\infty&(M',A),\ \ \mathscr D'(M',A),\\
&W^s(M',A),\ \  W^s_{{\rm comp\,}}(M',A),\ \  
W^s_{{\rm loc\,}}(M',A),
\end{split}
\] 
respectively.   
Let  $u\in W^s(\ol M,A)$. We define
\[\norm{u}_{s,\ol M}:=\inf\set{\norm{\Td u}_{s,M'};\, u'\in W^s(M',A), u'|_M=u}.\]
We call $\norm{u}_{s,\ol M}$ the Sobolev norm of $u$ of order $s$ on $\ol M$. 

Let $s$ be a non-negative integer. We can also define  Sobolev norm of order $s$ on $\ol M$ as follows: Let $x_0\in X$ and let $U$ be an open neighborhood of $x_0$ in $M'$ with local coordinates $x=(x_1,\ldots,x_{2n})$. 
Let $u\in\mathscr E'(U)\bigcap W^s(\ol M)$. Let $\Td u\in \mathscr E'(U)\bigcap W^s(M')$ with $\Td u|_M=u$. We define the Sobolev norm of order $s$ of $u$ on $\ol M$ by
\begin{equation}\label{e-gue171007hyc}
\norm{u}^2_{(s),\ol M}:=\sum_{\alpha\in\mathbb N_0^{2n},\abs{\alpha}\leq s}\int_{M}\abs{\pr^\alpha_x\Td u}^2dv_{M'}.
\end{equation}
By using partition of unity, for $u\in W^s(\ol M)$, we define $\norm{u}^2_{(s),\ol M}$ in the standard way. As in function case, we define $\norm{u}^2_{(s),\ol M}$, for $u\in W^s(\ol M,A)$ in the similar way. 
It is well-known (see~\cite[Corollary B.2.6]{Hor85}) that the two norms $\norm{\cdot}_{s,\ol M}$ and $\norm{\cdot}_{(s),\ol M}$ are equivalent for every non-negative integer $s$.

\section{$S^1$-equivariant $\overline\partial$-Neumann problem}\label{s-gue171006}

In this section, we introduce the $S^1$-equivariant $\overline\partial$-Neumann problem and $S^1$-equivariant Hodge isomorphism (see Theorem \ref{t-gue171008pm}). We leave the details of the proofs of the main results in this section to the Appendix. Until further notice, we fix $m\in\mathbb Z$ and $q\in\set{0,1,\ldots,n-1}$. Let $\ddbar:\Omega^{0,q}(\ol M)\To\Omega^{0,q+1}(\ol M)$ be the Cauchy-Riemann operator.  We write $\overline\partial_m$ to denote the restriction of $\overline\partial$ on $\Omega^{0, q}_m(\overline M)$. Since $T$ commutes with $\ddbar$, we have $\overline\partial_m: \Omega_m^{0, q}(\overline M)\rightarrow\Omega_m^{0, q+1}(\overline M)$.
We extend $\ddbar_m$ to $L^2_{(0,q),m}(M)$:
\[
\ddbar_m:{\rm Dom\,}\ddbar_m\subset L^{2}_{(0,q),m}(M)\rightarrow L^{2}_{(0,q+1),m}(M),
\]
where ${\rm Dom\,}\ddbar_m=\{u\in L^{2}_{(0, q), m}(M);\, \ddbar u\in L^{2}_{(0, q+1), m}(M)\}$.
Let
\[\ddbar^\star_m:{\rm Dom\,}\ddbar^\star_m\subset L^2_{(0,q+1),m}(M)\To L^2_{(0,q),m}(M)\]
be the Hilbert adjoint of $\ddbar_m$ with respect to $(\,\cdot\,|\,\cdot\,)_M$. The Gaffney extension of $m$-th $\ddbar$-Neumann Laplacian is given by
\begin{equation}\label{e-gue171006g}
\Box^{(q)}_m: {\rm Dom\,}\Box^{(q)}_m\subset L^2_{(0,q),m}(M)\To L^2_{(0,q),m}(M),
\end{equation}
where ${\rm Dom\,}\Box^{(q)}_{m}:=\{u\in L^{2}_{(0,q),m}(M): u\in{\rm Dom\,}\overline\partial_m\cap{\rm Dom\,}\overline\partial_m^\star, \ddbar_m u\in{\rm Dom\,}\ddbar_m^{\star}, \ddbar_m^{\star}u\in{\rm Dom\,}\ddbar_m \}$ and $\Box^{(q)}_mu=(\ddbar_m\,\ddbar^{\star}_m+\ddbar^\star_m\,\ddbar_m)u$, $u\in{\rm Dom\,}\Box^{(q)}_m$.
Put
\[{\rm Ker\,}\Box^{(q)}_m=\set{u\in{\rm Dom\,}\Box^{(q)}_m;\, \Box^{(q)}_mu=0}.\]
It is easy to check that
\begin{equation}\label{e-gue171006ycb}
{\rm Ker\,}\Box^{(q)}_{m}=\{u\in{\rm Dom\,}\Box^{(q)}_{m};\, \ddbar_m u=0, \ddbar_m^{\star} u=0\}.
\end{equation}
Exploiting the transversality of the action, we can use integration by parts and Stoke's theorem (see Lemma~\ref{main theorem1-170913}) and get the following estimate: For every $m\in\mathbb Z$, 
there exists $C_{m}>0$ such that for every $u\in{\rm Dom\,}\Box^{(q)}_m\cap\Omega^{0, q}_m(\overline M)$, we have
\begin{equation}\label{e-gue200125yyd}
\|u\|_{1,\ol M}\leq C_{m}(\|\Box^{(q)}_mu\|_{M}+\|u\|_{M}),
\end{equation}
where $\norm{\cdot}_{1,\ol M}$ denotes the Sobolev norm of order $1$ on $\ol M$ (see the discussion before \eqref{e-gue171007hyc}). 
From \eqref{e-gue200125yyd}, the method of  Folland-Kohn (see Section 5 in~\cite{FK72}) and some standard arguments in functional analysis, we get the following

	

\begin{theorem}\label{t-gue171008pm}
The operator $\Box^{(q)}_{m}: {\rm Dom\,}\Box^{(q)}_m\To L^2_{(0,q),m}(M)$ has $L^{2}$ closed range.
	Moreover,
	${\rm Ker\,}\Box^{(q)}_{m}\subset \Omega^{0.q}_{m}(\ol M)$, $\dim{\rm Ker\,}\Box^{(q)}_{m}<\infty$ and 
	\begin{equation}\label{iso170918m}
	{\rm Ker\,}\Box^{(q)}_{m}\cong H^{q}_{m}(\ol M).
	\end{equation}
	Hence,  ${\rm dim\,}H^q_m(\ol M)<+\infty$.
\end{theorem}


\section{The operators $\ddbar_\beta$, $\Box^{(q)}_\beta$ and reduction to the boundary}\label{s-gue171010}
In this section, we will show that if $m\gg1$, then $H^q_m(\ol M)\cong{\rm Ker\,}\hat\Box^{(q)}_{\beta,m}$, for every $q=0,1,\ldots,n-1$, where $\hat\Box^{(q)}_{\beta,m}$ is  an operator
analogous to the Kohn Laplacian defined on $X$(see \eqref{e-gue171014III}). From this result, we can reduce our problems to the boundary $X$. 
The methods of reduction to the boundary have a long history, going back to Calder\'on~\cite{Cal63}, Seeley~\cite{Seeley66}, H\"ormander~\cite{Hor66} and Boutet de Monvel~\cite{B71}.

\subsection{Poisson operator}
In this subsection, we introduce the Poisson operator on manifolds.
Fix $q\in\set{0,1,\ldots,n-1}$.
We first introduce some notations. We remind the reader that for $s\in\mathbb R$, the Sobolev space $W^{s}(\ol M,T^{*0,q}M')$ was introduced in the discussion after \eqref{e-gue171006yc}.
Let
\[\ddbar^\star_f: \Omega^{0,q+1}(M')\To\Omega^{0,q}(M')\]
be the formal adjoint of $\ddbar$ with respect to $(\,\cdot\,|\,\cdot\,)_{M'}$. That is
\[(\,\ddbar f\,|\,h\,)_{M'}=(\,f\,|\,\ddbar^\star_fh\,)_{M'},\]
$f\in\Omega^{0,q}_0(M')$, $h\in\Omega^{0,q+1}(M')$. Let
\[\Box^{(q)}_f=\ddbar\,\ddbar^\star_f+\ddbar^\star_f\,\ddbar: \Omega^{0,q}(M')\To\Omega^{0,q}(M')\]
denote the complex Laplace-Beltrami operator on $(0, q)$ forms. As before, let $\gamma$ denotes the operator of restriction to the boundary $X$. Let us consider the map
\begin{equation}\label{e-gue171010y}
\begin{split}
F^{(q)}:W^{2}(\ol M,T^{*0,q}M')&\rightarrow L^{2}_{(0,q)}(M)\oplus
W^{\frac{3}{2}}(X,T^{*0,q}M')\\
u&\mapsto (\Box_f^{(q)}u, \gamma u).
\end{split}
\end{equation}
From the elliptic regularity of $\Box_f^{(q) }$ with Dirichlet boundary condition, we know that
$\dim{\rm Ker\,}F^{(q)}<\infty$ and ${\rm Ker\,}F^{(q)}\subset \Omega^{0,q}(\ol M)$. Let
\begin{equation}\label{e-gue171011}
K^{(q)}: W^2(\overline M, T^{*0,q}M')\rightarrow{\rm Ker\,}F^{(q)}
\end{equation}
be the orthogonal projection with respect to $(\,\cdot\,|\,\cdot\,)_M$.  Put $\tilde \Box_f^{(q)}=\Box^{(q)}_f+K^{(q)}$ and consider the map
\begin{equation}\label{e-gue171010yI}
\begin{split}
\tilde F^{(q)}: W^2(\overline M, T^{*0,q}M')&\rightarrow L^{2}_{(0,q)}(M)\oplus W^{\frac{3}{2}}(X,T^{*0,q}M')\\
u&\mapsto (\tilde\Box_f^{(q)}u, \gamma u).
\end{split}
\end{equation}
Then $\tilde F^{(q)}$ is injective (see Chapter 3 in \cite{H08}). Let
\begin{equation}\label{e-gue171010yII}
\tilde P: C^\infty(X, T^{*0,q}M')\rightarrow\Omega^{0,q}(\overline M)
\end{equation}
be the Poisson operator for $\tilde \Box^{(q)}_f$ which is well-defined since \eqref{e-gue171010yI} is injective. The Poisson operator $\tilde P$ satisfies
\begin{equation}\label{e-gue171011II}
\begin{split}
&\tilde\Box^{(q)}_f\tilde Pu=0,\ \ \forall u\in C^\infty(X, T^{*0,q}M'),\\
&\gamma\tilde Pu=u,\ \  \forall u\in C^\infty(X, T^{*0,q}M').
\end{split}
\end{equation}

It is well-known that $\tilde P$ extends continuously
\[\tilde P: W^s(X, T^{*0,q}M')\rightarrow W^{s+\frac{1}{2}}(\overline M, T^{*0,q}M'),\ \ \forall s\in\mathbb R\]
(see page 29 of Boutet de Monvel~\cite{B71}). Let
$$\tilde P^\star: \hat{\mathcal E}'(\overline M, T^{*0,q}M')\rightarrow\mathcal D'(X, T^{*0,q}M')$$ be the operator defined by
\[(\,\tilde P^\star u\,|\,v\,)_X=(\,u\,|\,\tilde Pv\,)_M,\ \ u\in\hat{\mathcal E}'(\overline M, T^{*0,q}M'),\ \  v\in C^\infty(X, T^{*0,q}M'),\]
where $\hat{\mathcal E}'(\overline M, T^{*0,q}M')$ denotes the space of continuous linear maps from $\Omega^{0,q}(\ol M)$ to $\Complex$ with respect to $(\,\cdot\,|\,\cdot\,)_M$.
It is well-known (see page 30 of \cite{B71} ) that $\tilde P^\star$ is continuous: $\tilde P^\star: L^2_{(0,q)}(M)\rightarrow W^{\frac{1}{2}}(X, T^{*0,q}M')$ and
\[\tilde P^\star: \Omega^{0,q}(\overline M)\rightarrow C^\infty(X, T^{*0,q}M').\]

\begin{lemma}\label{comu0}
We have $T\circ K^{(q)}=K^{(q)}\circ T$ on $\Omega^{0, q}(\overline M)$.
\end{lemma}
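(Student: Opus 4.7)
The plan is to use that $K^{(q)}$ is orthogonal projection (with respect to $(\,\cdot\,|\,\cdot\,)_M$) onto the finite-dimensional subspace $\Ker F^{(q)}\subset\Omega^{0,q}(\ol M)$. For an orthogonal projection onto a subspace $V$, an operator commutes with the projection if and only if it preserves both $V$ and $V^{\perp}$. I would therefore reduce the lemma to verifying (i) $T(\Ker F^{(q)})\subset\Ker F^{(q)}$, and (ii) $T$ preserves the $(\,\cdot\,|\,\cdot\,)_M$-orthogonal complement of $\Ker F^{(q)}$ in $\Omega^{0,q}(\ol M)$.

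For (i): the $S^{1}$-action is holomorphic, so $[T,\ddbar]=0$; the Hermitian metric on $\Complex TM'$ is $S^{1}$-invariant, hence $\mathcal{L}_T dv_{M'}=0$, and integration by parts against compactly supported test forms on $M'$ yields $(Tu|v)_{M'}=-(u|Tv)_{M'}$. Combined with $[T,\ddbar]=0$ this gives $[T,\ddbar^{\star}_f]=0$ on $\Omega^{0,q}(M')$, and consequently $[T,\Box^{(q)}_f]=0$. Next, Assumption~\ref{a-gue170929} forces $T\rho\equiv0$, so $T$ is tangential along $X$, and $\gamma(Tu)=T_X(\gamma u)$ for $u\in\Omega^{0,q}(\ol M)$, where $T_X:=T|_X\in C^{\infty}(X,TX)$. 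Thus if $u\in\Ker F^{(q)}$, i.e., $\Box^{(q)}_f u=0$ and $\gamma u=0$, then $\Box^{(q)}_f(Tu)=T\Box^{(q)}_f u=0$ and $\gamma(Tu)=T_X(\gamma u)=0$, so $Tu\in\Ker F^{(q)}$.

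For (ii): the key input is the global skew-adjointness $(Tu|v)_M=-(u|Tv)_M$ for $u,v\in\Omega^{0,q}(\ol M)$. This follows from a Cartan--Stokes computation: $T$-invariance of the metric gives $T\langle u,v\rangle=\langle Tu,v\rangle+\langle u,Tv\rangle$ and $\mathcal{L}_T dv_{M'}=0$, so $\mathcal{L}_T(\langle u,v\rangle dv_{M'})=(\langle Tu,v\rangle+\langle u,Tv\rangle)dv_{M'}$; since this is a top form, Cartan's formula identifies it with $d\iota_T(\langle u,v\rangle dv_{M'})$, and Stokes' theorem rewrites its integral over $M$ as the boundary integral $\int_X \iota_T(\langle u,v\rangle dv_{M'})|_X$. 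Writing $dv_{M'}=d\rho\wedge\beta$ near $X$ and using $T\rho=0$ gives $\iota_T dv_{M'}=-d\rho\wedge\iota_T\beta$, whose pullback to $X$ is zero because the pullback of $d\rho$ to $X$ vanishes. With the skew-adjointness in hand, for any $w\in\Omega^{0,q}(\ol M)$ with $w\perp\Ker F^{(q)}$ and any $h\in\Ker F^{(q)}$, we have $(Tw|h)_M=-(w|Th)_M=0$, because $Th\in\Ker F^{(q)}$ by (i).

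Combining (i) and (ii) with the decomposition $u=K^{(q)}u+(u-K^{(q)}u)$, the element $TK^{(q)}u$ lies in $\Ker F^{(q)}$ and the remainder $T(u-K^{(q)}u)$ is orthogonal to $\Ker F^{(q)}$; uniqueness of the orthogonal decomposition then forces $K^{(q)}(Tu)=TK^{(q)}u$. The only nontrivial step is the vanishing of the boundary term in the skew-adjointness computation, which I expect to be the main (though mild) obstacle; everything else is a direct consequence of the three input properties that $T$ is holomorphic, tangential to $X$, and preserves the Hermitian metric.
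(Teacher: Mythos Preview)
Your proof is correct and uses essentially the same ingredients as the paper: $T$-invariance of $\Ker F^{(q)}$ (via $[T,\Box^{(q)}_f]=0$ and $\gamma T=T\gamma$) together with the skew-adjointness $(Tu\,|\,v)_M=-(u\,|\,Tv)_M$. The paper organizes these slightly differently, directly computing $(v\,|\,v)_M$ for $v:=K^{(q)}Tu-TK^{(q)}u$ and showing it vanishes, whereas you invoke the general criterion that an operator commutes with an orthogonal projection onto $V$ iff it preserves $V$ and $V^{\perp}$; but the substance is identical, and your explicit justification of why the boundary term in the skew-adjointness vanishes is a welcome detail that the paper leaves implicit.
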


\begin{proof}
Let $u\in\Omega^{0, q}(\overline M)$. We have $\Box^{(q)}_f K^{(q)}u=0$ and $\gamma K^{(q)}u=0$. Since $T$ commutes with $\Box^{(q)}_f$ and $\gamma$ we have $T\circ K^{(q)}u\in{\rm Ker\,}F^{(q)}$. Let $v:=K^{(q)}\circ Tu-T\circ K^{(q)}u$. Then, $v\in{\rm Ker\,}F^{(q)}$. We have
\begin{equation}
\begin{split}
(\,K^{(q)}\circ Tu-T\circ K^{(q)}u\,|\,v\,)_M&=(\,Tu\,|\,v\,)_M-(\,T\circ K^{(q)}u\,|\,v\,)_M\\
&=(\,Tu\,|\,v\,)_M+(\,K^{(q)}u\,|\,Tv\,)_M\\
&=(\,Tu\,|\,v\,)_M+(\,u\,|\,Tv\,)_M=0.
\end{split}
\end{equation}
Thus, $K^{(q)}\circ Tu-T\circ K^{(q)}u=0$. The lemma follows.
\end{proof}

We also need

\begin{lemma}\label{comu0d}
	We have $T\circ\tilde P=\tilde P\circ T$ on $C^\infty(X,T^{*0,q}M')$.
\end{lemma}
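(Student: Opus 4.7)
The plan is to use the injectivity of $\tilde F^{(q)}$ established just before the lemma statement, applied to the candidate commutator section
\[
v := T\bigl(\tilde P u\bigr) - \tilde P\bigl(Tu\bigr), \qquad u\in C^\infty(X,T^{*0,q}M').
\]
Since $\tilde P$ maps smooth data to $\Omega^{0,q}(\overline M)$ and $T$ is a first-order differential operator (via Lie derivative) acting on smooth forms, $v$ lies in $\Omega^{0,q}(\overline M)\subset H^2(\overline M,T^{*0,q}M')$, which is the domain of $\tilde F^{(q)}$. It suffices to show $\tilde F^{(q)}v=0$, i.e.\ that $\tilde\Box^{(q)}_f v=0$ and $\gamma v=0$.

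For the first component, I would establish that $T$ commutes with $\tilde\Box^{(q)}_f$. The commutation $[T,\ddbar]=0$ is the identity already recorded before \eqref{complex} (the $S^1$-action preserves the complex structure), and because the Hermitian metric $\langle\,\cdot\,|\,\cdot\,\rangle$ on $\Complex TM'$ was chosen $S^1$-invariant (see \eqref{e-gue171006a}), it follows that $T$ commutes with $\ddbar^\star_f$ as well; hence $[T,\Box^{(q)}_f]=0$. Combined with Lemma~\ref{comu0}, which gives $[T,K^{(q)}]=0$, we obtain $[T,\tilde\Box^{(q)}_f]=0$. Therefore
\[
\tilde\Box^{(q)}_f v = T\bigl(\tilde\Box^{(q)}_f\tilde P u\bigr) - \tilde\Box^{(q)}_f\tilde P(Tu) = 0-0 = 0,
\]
using \eqref{e-gue171011II} for both terms.

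For the boundary component, the key observation is that, under Assumption~\ref{a-gue170929}, the vector field $T$ is tangent to $X$, so the restriction operator $\gamma$ intertwines the Lie derivative along $T$ on $\overline M$ with the Lie derivative along the induced vector field on $X$; in other words $\gamma\circ T = T\circ\gamma$ on $\Omega^{0,q}(\overline M)$. Using this together with the Poisson identity $\gamma\tilde P=\mathrm{Id}$ from \eqref{e-gue171011II}, we get
\[
\gamma v = T(\gamma\tilde P u) - \gamma\tilde P(Tu) = Tu - Tu = 0.
\]
Therefore $\tilde F^{(q)}v=0$, and injectivity of $\tilde F^{(q)}$ forces $v=0$, which is exactly the desired identity $T\circ\tilde P=\tilde P\circ T$.

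The step I expect to require most care is the commutation $[T,\ddbar^\star_f]=0$, since $\ddbar^\star_f$ depends on the Hermitian metric and the volume form on $M'$; one must exploit that the chosen metric in \eqref{e-gue171006a} is $S^1$-invariant so that the induced volume form and the fiber metrics on $T^{*0,q}M'$ are preserved by $e^{i\theta}$. Once that is in place, everything else reduces to the defining properties of the Poisson operator and the injectivity of $\tilde F^{(q)}$.
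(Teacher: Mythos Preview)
Your proof is correct and follows essentially the same approach as the paper: define $v=T\tilde Pu-\tilde PTu$, use the commutations $[T,\Box^{(q)}_f]=0$, $[T,K^{(q)}]=0$, and $[T,\gamma]=0$ to conclude $\tilde F^{(q)}v=0$, and then invoke injectivity of $\tilde F^{(q)}$. The paper's proof is terser, simply asserting these commutations without the additional justification you provide for $[T,\ddbar^\star_f]=0$.
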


\begin{proof}
	Let $u\in C^\infty(X,T^{*0,q}M')$.
	Since $T\Box^{(q)}_{f}=\Box^{(q)}_{f}T$, $TK^{(q)}=K^{(q)}T$ and $T\gamma=\gamma T$, we have
	$\tilde F^{(q)}(T\tilde Pu-\tilde PTu)=0$. Since $\tilde F^{(q)}$ is injective, we get $T\circ\Td Pu=\Td P\circ Tu$.
\end{proof}

From Lemma~\ref{comu0} and Lemma~\ref{comu0d}, we have
\begin{equation}\label{e-gue171030a}
K^{(q)}: \Omega^{0,q}_m(\ol M)\To\Omega^{0,q}_m(\ol M),  \tilde P: \Omega^{0,q}_m(X)\To\Omega^{0,q}_m(\ol M),\ \ \forall m\in\mathbb Z.
\end{equation}

For every $m\in\mathbb Z$, let $Q^{(q)}_m: L^2_{(0,q)}(M)\To L^2_{(0,q),m}(M)$ be the orthogonal projection with respect to $(\,\cdot\,|\,\cdot\,)_M$. Let $u\in{\rm Ker\,}F^{(q)}$. It is easy to see that $Q^{(q)}_mu\in{\rm Ker\,}F^{(q)}$. For every $m\in\mathbb Z$, put ${\rm Ker\,}F^{(q)}_m:={\rm Ker\,}F^{(q)}\bigcap\Omega^{0,q}_m(\ol M)$. It is easy to see that ${\rm Ker\,}F^{(q)}_m\perp {\rm Ker\,}F^{(q)}_{m'}$, for every $m, m'\in\mathbb Z$, $m\neq m'$, and ${\rm Ker\,}F^{(q)}=\oplus_{m\in\mathbb Z}{\rm Ker\,}F^{(q)}_m$. From ${\rm dim\,}{\rm Ker\,}F^{(q)}<+\infty$ and \eqref{e-gue171030a}, we deduce that

\begin{lemma}\label{t-gue171011}
There is a $m_0\in\mathbb N$, such that for every $m\in\mathbb Z$ with $\abs{m}\geq m_0$, we have ${\rm Ker\,}F^{(q)}_m=\set{0}$ and $K^{(q)}u=0$, for every $u\in\Omega^{0,q}_m(\ol M)$, where $K^{(q)}$ is as in \eqref{e-gue171011}.
\end{lemma}

It should be mentioned that in this section, before Lemma~\ref{t-gue171011}, we do not need the hypothesis of largeness of the frequencies.


From Lemma~\ref{t-gue171011}, \eqref{e-gue171030a} and \eqref{e-gue171011II}, we deduce

\begin{lemma}\label{t-gue171011I}
There is a $m_0\in\mathbb N$, such that for every $m\in\mathbb Z$ with $\abs{m}\geq m_0$, we have
\begin{equation}\label{e-gue171011IId}
\begin{split}
&\Box^{(q)}_f\Td Pu=0,\ \ \forall u\in\Omega^{0,q}_m(X),\\
&\gamma\Td Pu=u,\ \  \forall u\in\Omega^{0,q}_m(X).
\end{split}
\end{equation}
\end{lemma}

Only when $|m|$ is sufficiently large we can have that the Poisson operator is a Harmonic extension operator from the boundary to the open complex manifold. From now on, we assume that $\abs{m}\geq m_0$, where $m_0\in\mathbb N$ is as in Lemma~\ref{t-gue171011I}. 
\subsection{$\overline\partial_\beta$-operator }
We define a new inner product on $W^{-\frac{1}{2}}(X,T^{*0,q}M')$ as follows:
\begin{equation}\label{inner product}
[\,u\,|\,v\,]=(\,\tilde Pu\,|\,\tilde Pv)_{M}.
\end{equation}

Let
\begin{equation}\label{e-gue171011pm}
Q: W^{-\frac{1}{2}}(X,T^{*0,q}M')\rightarrow{\rm Ker\,}\ddbar\rho^{\wedge,\star}=W^{-\frac{1}{2}}(X,T^{*0,q}X)
\end{equation}
be the orthogonal projection onto ${\rm Ker\,}\ddbar\rho^{\wedge,\star}$ with respect to $[\,\cdot\,|\,\cdot\,]$.

The following is well-known (see Lemma 4.2 of the second part in~\cite{H08} )

\begin{lemma} \label{l-gue171013}
$Q$ is a classical pseudodifferential operator of order $0$ with principal symbol
$2(\ddbar\rho)^{\wedge, \star}(\ddbar\rho)^\wedge$.
Moreover,
\begin{equation} \label{e-gue171013x}
I-Q=(\tilde P^\star\tilde P)^{-1}(\ddbar\rho)^\wedge S,
\end{equation}
where $S:C^\infty(X, T^{*0,q}M')\To C^\infty(X,T^{*0,q}M')$
is a classical pseudodifferential operator of order $-1$.
\end{lemma}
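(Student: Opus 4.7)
The strategy is to unfold the orthogonality definition of $Q$ into an operator identity that can be analyzed by pseudodifferential calculus on $X$. The inner product $[\cdot|\cdot]$ is realized by $\tilde P^\star\tilde P$, which itself is a classical pseudodifferential operator; this allows one to convert the projection condition into a symbolic equation that can be solved modulo smoothing.

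The first ingredient is a consequence of Boutet de Monvel's theory: since $\tilde\Box_f^{(q)}$ is second order elliptic with scalar principal symbol $|\xi|^2\,I$, and since $\tilde F^{(q)}$ in \eqref{e-gue171010yI} is injective, $\tilde P$ is a genuine Poisson operator and $\tilde P^\star\tilde P$ is an elliptic classical pseudodifferential operator on $X$ of order $-1$ with principal symbol $\tfrac{1}{2|\xi|}\,I$ on $T^{*0,q}M'|_X$. In particular $(\tilde P^\star\tilde P)^{-1}$ exists modulo smoothing and is classical of order $+1$ with principal symbol $2|\xi|\,I$.

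Next I unfold the definition of $Q$. Because $[v|w]=(\tilde P^\star\tilde P v|w)_X$, the condition that $(I-Q)u$ be $[\cdot|\cdot]$-orthogonal to $\ker\ddbar\rho^{\wedge,\star}$ is equivalent to $\tilde P^\star\tilde P(I-Q)u$ being $L^2(X)$-orthogonal to $\ker\ddbar\rho^{\wedge,\star}$. Since $(\ddbar\rho)^\wedge$ and $\ddbar\rho^{\wedge,\star}$ are pointwise adjoints, the pointwise $L^2$-orthogonal complement of $\ker\ddbar\rho^{\wedge,\star}$ in $T^{*0,q}M'|_X$ equals $\mathrm{Im}\,(\ddbar\rho)^\wedge$. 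Hence $\tilde P^\star\tilde P(I-Q)u=(\ddbar\rho)^\wedge Su$ for some operator $S$, giving
\[
I-Q=(\tilde P^\star\tilde P)^{-1}(\ddbar\rho)^\wedge S.
\]
Applying $\ddbar\rho^{\wedge,\star}$ to both sides and using $\ddbar\rho^{\wedge,\star}Qu=0$ yields
\[
A\,Su=\ddbar\rho^{\wedge,\star}u,\qquad A:=\ddbar\rho^{\wedge,\star}(\tilde P^\star\tilde P)^{-1}(\ddbar\rho)^\wedge.
\]
From $(\ddbar\rho)^\wedge\circ(\ddbar\rho)^\wedge=0$ we get $\ddbar\rho^{\wedge,\star}\circ\ddbar\rho^{\wedge,\star}=0$, so $\ddbar\rho^{\wedge,\star}u$ automatically lies in $T^{*0,q-1}X:=\ker\ddbar\rho^{\wedge,\star}\cap T^{*0,q-1}M'$. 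Using $|d\rho|^2=1$ on $X$, hence $|\ddbar\rho|^2=\tfrac12$, the principal symbol of $A$ restricted to that subbundle is
\[
\sigma_{+1}(A)\big|_{T^{*0,q-1}X}=2|\xi|\,\ddbar\rho^{\wedge,\star}(\ddbar\rho)^\wedge=2|\xi|\bigl(|\ddbar\rho|^2 I-(\ddbar\rho)^\wedge\ddbar\rho^{\wedge,\star}\bigr)=|\xi|\,I,
\]
so $A$ is elliptic on $T^{*0,q-1}X$. A standard symbolic parametrix construction produces $A^{-1}$, a classical pseudodifferential operator of order $-1$ on $T^{*0,q-1}X$ (modulo smoothing), and $S:=A^{-1}\ddbar\rho^{\wedge,\star}$ is then classical of order $-1$, which gives \eqref{e-gue171013x}.

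Finally, composing principal symbols,
\[
\sigma_0(I-Q)=2|\xi|\cdot(\ddbar\rho)^\wedge\cdot|\xi|^{-1}\cdot\ddbar\rho^{\wedge,\star}=2(\ddbar\rho)^\wedge\ddbar\rho^{\wedge,\star},
\]
and the identity $(\ddbar\rho)^\wedge\ddbar\rho^{\wedge,\star}+(\ddbar\rho)^{\wedge,\star}(\ddbar\rho)^\wedge=|\ddbar\rho|^2 I=\tfrac12 I$ gives $\sigma_0(Q)=I-2(\ddbar\rho)^\wedge\ddbar\rho^{\wedge,\star}=2(\ddbar\rho)^{\wedge,\star}(\ddbar\rho)^\wedge$, which is the first assertion. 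The main obstacle is the very first step: extracting the pseudodifferential structure and precise principal symbol of $\tilde P^\star\tilde P$, which requires the Boutet de Monvel calculus for boundary value problems and is the reason for replacing $\Box_f^{(q)}$ by the injective modification $\tilde\Box_f^{(q)}=\Box_f^{(q)}+K^{(q)}$. A secondary delicacy is that $A$ fails to be elliptic on the full bundle $T^{*0,q-1}M'$ (its symbol vanishes on $\mathrm{Im}\,(\ddbar\rho)^\wedge$), but this is harmless since both $\ddbar\rho^{\wedge,\star}u$ and the image of $A|_{T^{*0,q-1}X}$ lie in $T^{*0,q-1}X$, so the inversion is only needed on that subbundle.
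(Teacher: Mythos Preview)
The paper does not give its own proof of this lemma; it simply cites Lemma~4.2 of the second part of \cite{H08}. Your argument is essentially the standard one found there: translate $[\cdot\,|\,\cdot]$-orthogonality into $(\,\cdot\,|\,\cdot\,)_X$-orthogonality via the elliptic $\Psi$DO $\tilde P^\star\tilde P$, use the pointwise splitting $T^{*0,q}M'|_X=T^{*0,q}X\oplus I^{0,q}T^*M'$ to write $\tilde P^\star\tilde P(I-Q)u=(\ddbar\rho)^\wedge Su$, and then recover $S$ from the elliptic first-order operator $A=\ddbar\rho^{\wedge,\star}(\tilde P^\star\tilde P)^{-1}(\ddbar\rho)^\wedge$ acting on $T^{*0,q-1}X$. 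The symbol computations are correct.

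Two small points. First, $(\tilde P^\star\tilde P)^{-1}$ is a genuine inverse, not merely a parametrix: $\tilde P$ is injective by construction (this is exactly why $\Box^{(q)}_f$ was replaced by $\tilde\Box^{(q)}_f$), and Remark~\ref{r-gue171013} records this. So the identity \eqref{e-gue171013x} is exact. Second, your passage from the equation $AS=\ddbar\rho^{\wedge,\star}$ to ``$S$ is a classical $\Psi$DO of order $-1$'' is slightly elliptical: you first \emph{define} $S$ via $Q$ (so it is a priori only $L^2$-bounded), and then identify it with $B\,\ddbar\rho^{\wedge,\star}$ for a parametrix $B$ of $A$. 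The clean way to close this is the usual bootstrap: from $AS=\ddbar\rho^{\wedge,\star}$ and $BA=I+R$ with $R$ smoothing one gets $S=B\,\ddbar\rho^{\wedge,\star}-RS$; since $S$ is $L^2$-bounded and $R$ has smooth kernel, $RS$ is smoothing, hence $S$ is indeed classical of order $-1$. With that remark your argument is complete and matches the reference.
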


\begin{remark}\label{r-gue171013}
It is well-known that the operator
\[\tilde P^\star\tilde P:C^\infty(X,T^{*0,q}M')\To C^\infty(X,T^{*0,q}M')\]
is a classical elliptic pseudodifferential operator of order $-1$ and invertible since $\tilde P$ is
injective~(see Boutet de Monvel~\cite{B71}). Moreover, the operator
\[(\tilde P^\star\tilde P)^{-1}:C^\infty(X,T^{*0,q}M')\To C^\infty(X,T^{*0,q}M')\]
is a classical elliptic pseudodifferential operator of order $1$.
\end{remark}

We consider the following operator
\begin{equation}\label{e-gue171011pmI}
\ddbar_{\beta}=Q\gamma\ddbar \tilde P:\Omega^{0,q}(X)\rightarrow\Omega^{0,q+1}(X).
\end{equation}
The operator $\ddbar_\beta$ was introduced by the first author in~\cite{H08}. It is straightforward to see that
\begin{equation}\label{e-gue171030aII}
\mbox{$T\circ Q=Q\circ T$ on $\Omega^{0,q}(X)$}.
\end{equation}
From \eqref{e-gue171030aII} and
Lemma~\ref{comu0d}, we deduce that
\begin{equation}\label{e-gue171012b}
T\circ \ddbar_{\beta}= \ddbar_{\beta}\circ T\ \ \mbox{on $\Omega^{0,q}(X)$}.
\end{equation}
Let $\ddbar^\dagger_\beta:\Omega^{0,q+1}(X)\To\Omega^{0,q}(X)$ be the formal adjoint with respect to $[\,\cdot\,|\,\cdot\,]$. It is not difficult to see that
\begin{equation}\label{e-gue171012bI}
T\circ \ddbar^\dagger_{\beta}= \ddbar^\dagger_{\beta}\circ T\ \ \mbox{on $\Omega^{0,q}(X)$}.
\end{equation}
Set
\begin{equation}\label{e-gue171012bII}
\Box^{(q)}_{\beta}=\ddbar_{\beta}^{\dagger}\ddbar_{\beta}+\ddbar_{\beta}\ddbar_{\beta}^{\dagger}:\Omega^{0,q}(X)\To\Omega^{0,q}(X).
\end{equation}
From \eqref{e-gue171012b} and \eqref{e-gue171012bI}, we deduce that
\begin{equation}\label{e-gue171012bIII}
T\circ\Box^{(q)}_{\beta}= \Box^{(q)}_{\beta}\circ T\ \ \mbox{on $\Omega^{0,q}(X)$}.
\end{equation}
We write $\Box^{(q)}_{\beta,m}$ to denote the restriction of $\Box^{(q)}_{\beta}$ on $\Omega^{0,q}_m(X)$.

We pause and recall the tangential Cauchy-Riemann operator and Kohn Laplacian.
For $x\in X$, let
$\pi^{0,q}_x:T^{*0,q}_xM'\To T^{*0,q}_xX$
be the orthogonal projection map with respect to $\langle\,\cdot\,|\,\cdot\,\rangle$. The tangential Cauchy-Riemann operator is given by
\[\ddbar_b:=\pi^{0,q+1}\circ d:\Omega^{0,q}(X)\To\Omega^{0,q+1}(X).\]
Let $\ddbar^\star_b$ be the formal adjoint of $\ddbar_b$ with respect to $(\,\cdot\,|\,\cdot\,)_X$, that is
$(\,\ddbar_{b}f\,|\,\,h)_X=(f\,|\,\ddbar^\star_bh\,)_X$, $f\in\Omega^{0,q}(X)$, $h\in\Omega^{0,q+1}(X)$. The Kohn Laplacian is given by
\begin{equation}\label{e-gue171011pmy}
\Box^{(q)}_b:=\ddbar_b\,\ddbar^\star_b+\ddbar^\star_b\,\ddbar_b:\Omega^{0,q}(X)\To\Omega^{0,q}(X).
\end{equation}
It is well-known that (see~\cite{Hsiao14} and~\cite{HL15})
\begin{equation}\label{e-gue171012}
\begin{split}
T\circ\ddbar_b=\ddbar_b\circ T\ \ \mbox{on $\Omega^{0,q}(X)$},\\
T\circ\ddbar^\star_b=\ddbar^\star_b\circ T\ \ \mbox{on $\Omega^{0,q}(X)$},\\
T\circ\Box^{(q)}_b=\Box^{(q)}_b\circ T\ \ \mbox{on $\Omega^{0,q}(X)$}.
\end{split}
\end{equation}
We write $\Box^{(q)}_{b,m}$ to denote the restriction of $\Box^{(q)}_{b}$ to $\Omega^{0,q}_m(X)$.
We come back to our situation. We recall the following (see Lemma 5.1, equation (5.3) and Lemma 5.2 in the second part of~\cite{H08})

\begin{proposition}\label{t-gue171012}
We have that $\ddbar_\beta$ and $\ddbar^\dagger_\beta$ are classical pseudodifferential operators of order $1$,
\begin{equation}\label{e-gue171012a}
\ddbar_{\beta}\circ \ddbar_{\beta}=0\ \ \mbox{on $\Omega^{0,q}(X)$}
\end{equation}
and
\begin{equation}\label{e-gue171012aI}
\begin{split}
&\mbox{$\ddbar_{\beta}=\ddbar_{b}$+lower order terms},\\
&\mbox{$\ddbar_{\beta}^{\dagger}=\gamma\ddbar^{\star}_{f}\tilde P=\ddbar^{\star}_{b}$+lower order terms}.
\end{split}
\end{equation}
\end{proposition}

\subsection{Reduction to boundary}
We introduce some notations. Let
\[\triangle_X:=-\frac{1}{2}(dd^\star+d^\star d):C^\infty(X,\Lambda^q(\Complex T^*X))\To C^\infty(X,\Lambda^q(\Complex T^*X))\]
be the real Laplacian on $X$, where $d: C^\infty(X,\Lambda^q(\Complex T^*X))\To C^\infty(X,\Lambda^{q+1}(\Complex T^*X))$ is the standard derivative
and $d^\star: C^\infty(X,\Lambda^{q+1}(\Complex T^*X))\To C^\infty(X,\Lambda^{q}(\Complex T^*X))$ is the formal adjoint of $d$ with respect to $(\,\cdot\,|\,\cdot\,)_X$. We extend $-\triangle_X$ to $L^2$ space:
\[-\triangle_X: {\rm Dom\,}(-\triangle_X)\subset L^2(X,\Lambda^q(\Complex T^*X))\To L^2(X,\Lambda^q(\Complex T^*X)),\]
where ${\rm Dom\,}(-\triangle_X)=\set{u\in L^2(X,\Lambda^q(\Complex T^*X));\, -\triangle_Xu\in L^2(X,\Lambda^q(\Complex T^*X))}$.
Then, $-\triangle_X$  is a non-negative operator. Let $\sqrt{-\triangle_X}$ be the square root of $-\triangle_X$.
Then, $\sqrt{-\triangle_X}$ is a non-negative operator, has $L^2$ closed range, ${\rm Ker\,}\sqrt{-\triangle_X}$ is a finite dimensional subspace of $C^\infty(X,\Lambda^q(\Complex T^*X))$. Moreover, it is easy to see that
\begin{equation}\label{e-gue171012r}
T\circ\sqrt{-\triangle_X}=\sqrt{-\triangle_X}\circ T\ \ \mbox{on $C^\infty(X,\Lambda^q(\Complex T^*X))$}.
\end{equation}
Let $G: L^2(X,\Lambda^q(\Complex T^*X))\To{\rm Dom\,}\sqrt{-\triangle_X}$ be the partial inverse of $\sqrt{-\triangle_X}$ and let $H: L^2(X,\Lambda^q(\Complex T^*X))\To{\rm Ker\,}\sqrt{-\triangle_X}$ be the orthogonal projection. We have
\begin{equation}\label{e-gue171012rb}
\begin{split}
&\sqrt{-\triangle_X}G+H=I\ \ \mbox{on $L^2(X,\Lambda^q(\Complex T^*X))$},\\
&G\sqrt{-\triangle_X}+H=I\ \ \mbox{on ${\rm Dom\,}\sqrt{-\triangle_X}$}.
\end{split}
\end{equation}
Note that $G$ is a classical pseudodifferentail operator of order $-1$ and $H$ is a smoothing operator.


For $x\in X$, put
\begin{equation} \label{e-gue171012t}
I^{0,q}T^*_xM'=\set{u\in T^{*0,q}_xM';\, u=(\ddbar\rho)^\wedge g,\ g\in T^{*0,q-1}_xM'}
\end{equation}
and let $I^{0,q}T^*M'$ be the vector bundle over $X$ with fiber $I^{0,q}T^*_xM'$, $x\in X$. Note that for every $x\in M'$,
$I^{0,q}T^*_x(M)$ is orthogonal to $T^{*0,q}_xX$.
We recall the following (see Proposition 4.2 of Part II in~\cite{H08})

\begin{proposition} \label{t-gue171012t}
The operator
\[\gamma(\ddbar\rho)^{\wedge}\ddbar^\star_f\Td P(\Td P^\star\Td P)^{-1}: C^\infty(X, I^{0,q}T^*M')\To C^\infty(X,I^{0,q}T^*M')\]
is a classical pseudodifferential operator of order two,
\begin{equation} \label{e-gue171012tII}
\gamma(\ddbar\rho)^{\wedge}\ddbar^\star_f\Td P(\Td P^\star\Td P)^{-1}=(iT-\sqrt{-\triangle_X})\sqrt{-\triangle_X}+R,
\end{equation}
where the reminder term $R$ has order $\leq 1$.
\end{proposition}

For every $m\in\mathbb Z$, put $C^\infty_m(X, I^{0,q}T^*M'):=\set{u\in C^\infty(X, I^{0,q}T^*M');\, Tu=imu}$. Note that $I^{0,q}T^*M'$ is a $S^1$-invariant vector bundle over $X$. It is not difficult to see that
\begin{equation}\label{e-gue171012u}
T\circ \gamma(\ddbar\rho)^{\wedge}\ddbar^\star_f\Td P(\Td P^\star\Td P)^{-1}=\gamma(\ddbar\rho)^{\wedge}\ddbar^\star_f\Td P(\Td P^\star\Td P)^{-1}\circ T\ \ \mbox{on $C^\infty(X, I^{0,q}T^*M')$}
\end{equation}
and hence
\begin{equation}\label{e-gue171012uI}
\gamma(\ddbar\rho)^{\wedge}\ddbar^\star_f\Td P(\Td P^\star\Td P)^{-1}: C^\infty_m(X, I^{0,q}T^*M')\To C^\infty_m(X, I^{0,q}T^*M'),\ \ \forall m\in\mathbb Z.
\end{equation}

We can now prove

\begin{lemma} \label{t-gue171012ta}
There is a $\hat m_0\in\mathbb N$ such that for every $m\geq\hat m_0$, $m\in\mathbb N$, the operator
\[\gamma(\ddbar\rho)^{\wedge}\ddbar^\star_f\Td P(\Td P^\star\Td P)^{-1}: C^\infty_m(X, I^{0,q}T^*M')\To C^\infty_m(X,I^{0,q}T^*M')\]
is injective.
\end{lemma}

Note that Lemma \ref{t-gue171012ta} is the key point why $m$ is positive in \eqref{e-gue171026I}.
\begin{proof}
Since ${\rm Ker\,}\sqrt{-\triangle_X}$ is a finite dimensional subspace of $C^\infty(X,\Lambda^q(\Complex T^*M'))$, there is a $m_1\in\mathbb N$, such that for every $m\geq m_1$, $m\in\mathbb N$, we have
\begin{equation}\label{e-gue171012e}
{\rm Ker\,}\sqrt{-\triangle_X}\bigcap C^\infty_m(X,\Lambda^q(\Complex T^*M'))=\set{0},
\end{equation}
where $C^\infty_m(X,\Lambda^q(\Complex T^*M'))=\set{u\in C^\infty(X,\Lambda^q(\Complex T^*M'));\, Tu=imu}$. Let $G, H$ be as in \eqref{e-gue171012rb}. From \eqref{e-gue171012r}, it is not difficult to see that
\[H: C^\infty_m(X,\Lambda^q(\Complex T^*M'))\To C^\infty_m(X,\Lambda^q(\Complex T^*M')),\ \ \forall m\in\mathbb Z.\]
From this observation and \eqref{e-gue171012e}, we conclude that
\begin{equation}\label{e-gue171012eI}
{\rm Ker\,}\sqrt{-\triangle_X}\perp C^\infty_m(X,\Lambda^q(\Complex T^*M')),\ \ \forall m\in\mathbb Z,\ \ m\geq m_1.
\end{equation}

Let $u\in C^\infty_m(X,I^{0,q}T^*M')$, $m\geq m_1$. From \eqref{e-gue171012tII}, \eqref{e-gue171012eI} and \eqref{e-gue171012rb}, we have
\begin{equation}\label{e-gue171012eIII}
\begin{split}
&\gamma(\ddbar\rho)^{\wedge}\ddbar^\star_f\Td P(\Td P^\star\Td P)^{-1}u\\
&=(iT-\sqrt{-\triangle_X})(\sqrt{-\triangle_X}u)+R\circ G\circ(\sqrt{-\triangle_X}u).
\end{split}
\end{equation}
Since $R\circ G$ is a classical pseudodifferential operator of order $0$, there is a constant $C>0$ independent of $m$ and $u$ such that
\begin{equation}\label{e-gue171012s}
\norm{R\circ G\circ(\sqrt{-\triangle_X}u)}_X\leq C\norm{\sqrt{-\triangle_X}u}_X.
\end{equation}
Since $T\sqrt{-\triangle_X}u=im\sqrt{-\triangle_X}u$ and $\sqrt{-\triangle_X}$ is a non-negative operator, we deduce that
\begin{equation}\label{e-gue171012sI}
\norm{(iT-\sqrt{-\triangle_X})(\sqrt{-\triangle_X}u)}_X\geq m\norm{\sqrt{-\triangle_X}u}_X.
\end{equation}
From \eqref{e-gue171012eIII}, \eqref{e-gue171012s}, \eqref{e-gue171012sI} and \eqref{e-gue171012eI} we conclude that there is a $m_2\in\mathbb N$ with $m_2>m_1\in\mathbb N$, such that for every $u\in C^\infty_m(X,I^{0,q}T^*M')$, $m\geq m_2$, if
\[\gamma(\ddbar\rho)^{\wedge}\ddbar^\star_f\Td P(\Td P^\star\Td P)^{-1}u=0,\]
then $\sqrt{-\triangle_X}u=0$ and hence $u=0$. The theorem follows.
\end{proof}

Put ${\rm Ker\,}\Box^{(q)}_{\beta,m}:=\set{u\in\Omega^{0,q}_m(X);\, \Box^{(q)}_{\beta,m}u=0}$. As Kohn Laplacian case, $\Box^{(q)}_{\beta,m}$ is a transversally elliptic operator and we have ${\rm dim\,}{\rm Ker\,}\Box^{(q)}_{\beta,m}<+\infty$ (see Section 3 in~\cite{CHT}) .
We can now prove the main result of this section

\begin{theorem}\label{t-gue171013}
There is a $\tilde m_0\in\mathbb N$, such that for all $m\geq\tilde m_0$, $m\in\mathbb N$, we have
\[H^{q}_{m}(\ol M)\cong{\rm Ker\,}\Box^{(q)}_{\beta,m}.\]
\end{theorem}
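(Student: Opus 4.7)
The plan is to exhibit the isomorphism via the boundary restriction $\Phi(u):=\gamma u$ from ${\rm Ker\,}\Box^{(q)}_{m}$ to ${\rm Ker\,}\Box^{(q)}_{\beta,m}$, with inverse the Poisson extension $\Psi(v):=\tilde P v$. I would choose $\tilde m_0$ large enough that Theorem~\ref{t-gue171011}, Theorem~\ref{t-gue171011I} and Theorem~\ref{t-gue171012ta} apply at degrees $q-1,q,q+1$ simultaneously. Two facts will be used repeatedly: (i) the inner product $[\,\cdot\,|\,\cdot\,]$ is positive definite (since $\tilde F^{(j)}$ is injective for $m\geq\tilde m_0$), so $\Box^{(q)}_{\beta,m}v=0$ is equivalent to the pair $\ddbar_{\beta}v=0$ and $\ddbar^{\dagger}_{\beta}v=0$; (ii) for such $m$, any $\Box^{(j)}_{f}$-harmonic form in $\Omega^{0,j}_{m}(\ol M)$ is $\tilde\Box^{(j)}_{f}$-harmonic (by Theorem~\ref{t-gue171011}) and therefore, by injectivity of $\tilde F^{(j)}$, equals the Poisson extension of its boundary trace.

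For $u\in{\rm Ker\,}\Box^{(q)}_{m}$, the description \eqref{Neum1} yields $\gamma(\ddbar\rho)^{\wedge,\star}u=0$, hence $\gamma u\in\Omega^{0,q}_{m}(X)$, and fact (ii) at degree $q$ gives $u=\tilde P(\gamma u)$, which immediately proves that $\Phi$ is injective. Using \eqref{e-gue171012aI} one computes $\ddbar_{\beta}(\gamma u)=Q\gamma\ddbar u=0$ and $\ddbar^{\dagger}_{\beta}(\gamma u)=\gamma\ddbar^{\star}_{f}u=0$, so by (i), $\Phi(u)\in{\rm Ker\,}\Box^{(q)}_{\beta,m}$. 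For the reverse direction, set $u:=\tilde P v$ with $v\in{\rm Ker\,}\Box^{(q)}_{\beta,m}$. Then $u\in\Omega^{0,q}_{m}(\ol M)$ by \eqref{e-gue171030aI}, $\Box^{(q)}_{f}u=0$ by fact (ii) combined with \eqref{e-gue171011IId}, and $\gamma(\ddbar\rho)^{\wedge,\star}u=(\ddbar\rho)^{\wedge,\star}v=0$ since $v\in T^{*0,q}X$. The identity $\Box^{(q-1)}_{f}\ddbar^{\star}_{f}=\ddbar^{\star}_{f}\Box^{(q)}_{f}$ together with fact (ii) at degree $q-1$ now gives $\ddbar^{\star}_{f}u=\tilde P_{q-1}(\gamma\ddbar^{\star}_{f}u)=\tilde P_{q-1}(\ddbar^{\dagger}_{\beta}v)=0$ via \eqref{e-gue171012aI} (this step is vacuous when $q=0$).

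The expected main obstacle is showing $\ddbar u=0$. Writing $w:=\gamma\ddbar u$, fact (ii) at degree $q+1$ gives $\ddbar u=\tilde P_{q+1}w$, and combining $\ddbar^{\star}_{f}u=0$ with $\Box^{(q)}_{f}u=0$ produces $\ddbar^{\star}_{f}\ddbar u=0$, whence
\[
\gamma(\ddbar\rho)^{\wedge}\ddbar^{\star}_{f}\tilde P_{q+1}w=\gamma(\ddbar\rho)^{\wedge}\ddbar^{\star}_{f}\ddbar u=0.
\]
The key algebraic step uses the normal-form description \eqref{e-gue171013x}: since $\ddbar_{\beta}v=Qw=0$, one has $w=(I-Q)w=(\tilde P^{\star}_{q+1}\tilde P_{q+1})^{-1}(\ddbar\rho)^{\wedge}Sw$, so
\[
(\tilde P^{\star}_{q+1}\tilde P_{q+1})w=(\ddbar\rho)^{\wedge}Sw\in C^{\infty}_{m}(X,I^{0,q+1}T^{*}M').
\]
Substituting $w=(\tilde P^{\star}_{q+1}\tilde P_{q+1})^{-1}\bigl[(\tilde P^{\star}_{q+1}\tilde P_{q+1})w\bigr]$ into the first display and invoking Theorem~\ref{t-gue171012ta} at degree $q+1$ — which guarantees injectivity of $\gamma(\ddbar\rho)^{\wedge}\ddbar^{\star}_{f}\tilde P_{q+1}(\tilde P^{\star}_{q+1}\tilde P_{q+1})^{-1}$ on the $I$-type subspace — forces $(\tilde P^{\star}_{q+1}\tilde P_{q+1})w=0$; Remark~\ref{r-gue171013} then gives $w=0$ and hence $\ddbar u=0$, completing $u\in{\rm Ker\,}\Box^{(q)}_{m}$ by \eqref{Neum1}. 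This is the delicate point, where the algebraic information encoded in $\ker Q$ by Lemma~\ref{l-gue171013} must be matched with the injectivity provided by Theorem~\ref{t-gue171012ta}.
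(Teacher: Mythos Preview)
Your proof is correct and follows essentially the same route as the paper: both set up the mutually inverse maps $u\mapsto\gamma u$ and $v\mapsto\tilde P v$ between ${\rm Ker\,}\Box^{(q)}_{m}$ and ${\rm Ker\,}\Box^{(q)}_{\beta,m}$, and both reduce the hard surjectivity step to the vanishing of $w:=\gamma\ddbar\tilde P v$ by combining $Qw=0$ with the formula \eqref{e-gue171013x} and the injectivity of Theorem~\ref{t-gue171012ta}. Your bookkeeping differs only cosmetically: where the paper first derives $\gamma\ddbar^\star_f\tilde P(I-Q)w=0$ and then wedges with $\ddbar\rho$, you first obtain $\ddbar^\star_f u=0$ in the interior (via fact (ii) at degree $q-1$), deduce $\ddbar^\star_f\ddbar u=0$, and then rewrite $w=(\tilde P^\star\tilde P)^{-1}\bigl[(\tilde P^\star\tilde P)w\bigr]$ with $(\tilde P^\star\tilde P)w\in I^{0,q+1}T^*M'$ before invoking injectivity. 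You are also more explicit than the paper about the need to apply Theorem~\ref{t-gue171011} and the injectivity of $\tilde F^{(j)}$ at the neighboring degrees $q\pm1$ (your fact (ii)), a step the paper uses silently when identifying $\ddbar\tilde P v$ and $\ddbar^\star_f\tilde P v$ with the Poisson extensions of their traces.
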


\begin{proof}
We assume that $m\geq m_0$, where $m_0\in\mathbb N$ is as in Lemma~\ref{t-gue171011I}. Thus, \eqref{e-gue171011IId} hold.
Put
\[W=\{u\in\Omega^{0,q}_{m}(\ol M);\,
\ddbar u=0, \ddbar^{\star}_{f}u=0, \gamma\ddbar\rho^{\wedge,\star}u=0,
\gamma\ddbar\rho^{\wedge,\star}\ddbar u=0  \}.\]
By \eqref{Neum1} and the isomorphism \eqref{iso170918m}, we only need to show that ${\rm Ker\,}\Box^{(q)}_{\beta, m}\cong W$.
We consider the map
\begin{equation}\label{e-gue171013a}
\begin{split}
F: W&\rightarrow{\rm Ker\,}\Box^{(q)}_{\beta, m},\\
u&\mapsto v=\gamma u. \\
\end{split}
\end{equation}
We now show that the map $F$ is well-defined. Let $u\in W$. Since $\gamma\ddbar\rho^{\wedge,\star}u=0$, we have
$v:=\gamma u\in C^{\infty}_{m}(X,T^{*0,q}X)$. Since $\Box^{(q)}_{f}u=0$, we have $u=\tilde P\gamma u$ and hence
$\ddbar_{\beta}v=Q\gamma\ddbar \tilde P\gamma u=Q\gamma\ddbar u=0$. From \eqref{e-gue171012aI}, we have
\[\ddbar^\dagger_\beta v=\gamma\ddbar^\star_f\tilde Pv=\gamma\ddbar^\star_fu=0.\]
We have proved that $v:=\gamma u\in{\rm Ker\,}\Box^{(q)}_\beta$ and the map $F$ is well-defined.

Let $u\in W$. If $v:=F(u)=\gamma u=0$, then $u=\tilde Pv=0$. Hence the map is injective. Now, we prove that the map $F$ is surjective. Fix $v\in{\rm Ker\,}\Box^{(q)}_{\beta, m}$. We are going to prove that $\tilde Pv\in W$ if $m\geq\sup\set{m_0, \hat m_0}$, where $\hat m_0\in\mathbb N$ is as in Lemma~\ref{t-gue171012ta}.
Since $v\in\Omega^{0,q}_{m}(X)$, we have
\begin{equation}\label{e-gue171013pm}
\gamma\ddbar\rho^{\wedge,\star}\tilde Pv=\ddbar\rho^{\wedge,\star}v=0.
\end{equation}
Since $\Box^{(q)}_{\beta,m}v=0$, we have
\begin{equation}\label{e-gue171013p}
\ddbar_{\beta}v=Q\gamma\ddbar \tilde Pv=0
\end{equation}
and
\begin{equation}\label{e-gue171013pI}
\ddbar^{\dagger}_{\beta}v=\gamma\ddbar^{\star}_{f}\tilde Pv=0.
\end{equation}
Combining \eqref{e-gue171013p}, \eqref{e-gue171013pI} with $\gamma(\ddbar\,\ddbar^\star_f+\ddbar^\star_f\,\ddbar)\tilde Pv=0$, we have
\[\gamma\ddbar^\star_f\tilde P\gamma\ddbar\tilde Pv=-\gamma\ddbar\tilde P\gamma\ddbar^\star_f\tilde Pv=0\]
and
\begin{equation} \label{e-gue171013pII}
\gamma\ddbar^\star_f\tilde P(I-Q)\gamma\ddbar\tilde Pv=\gamma\ddbar^\star_f\tilde P\gamma\ddbar\tilde Pv
-\gamma\ddbar^\star_f\tilde PQ\gamma\ddbar\tilde Pv=0.
\end{equation}
Combining \eqref{e-gue171013pII} with \eqref{e-gue171013x}, we get
\[\gamma\ddbar^\star_f\tilde P(I-Q)\gamma\ddbar\tilde Pv=\gamma\ddbar^\star_f\tilde P(\tilde P^\star\tilde P)^{-1}(\ddbar\rho)^\wedge S\gamma\ddbar \tilde Pv= 0.\]
Thus,
\begin{equation}\label{e-gue171013pIII}
\gamma(\ddbar\rho)^\wedge\ddbar^\star_f\tilde P(\tilde P^\star\tilde P)^{-1}(\ddbar\rho)^\wedge S\gamma\ddbar \tilde Pv= 0.
\end{equation}
In view of Lemma~\ref{t-gue171012ta}, we see that for every $m\geq\hat m_0$, $m\in\mathbb N$, the operator
\[\gamma(\ddbar\rho)^{\wedge}\ddbar^\star_f\Td P(\Td P^\star\Td P)^{-1}: C^\infty_m(X, I^{0,q}T^*M')\To C^\infty_m(X,I^{0,q}T^*M')\]
is injective. From this observation, \eqref{e-gue171013x} and \eqref{e-gue171013pIII}, we conclude that if $m\geq\hat m_0$, we have
$(\ddbar\rho)^\wedge S\gamma\ddbar \tilde Pv=0$ and
\begin{equation}\label{e-gue171013mI}
(I-Q)\gamma\ddbar \tilde Pv=(\tilde P^\star\tilde P)^{-1}(\ddbar\rho)^\wedge S\gamma\ddbar \tilde Pv=0.
\end{equation}
From \eqref{e-gue171013p} and \eqref{e-gue171013mI}, we deduce that
\begin{equation}\label{e-gue171013mII}
\gamma\ddbar\tilde Pv=0.
\end{equation}
From \eqref{e-gue171013pm}, \eqref{e-gue171013pI} and \eqref{e-gue171013mII}, we deduce that $\tilde Pv\in W$ and hence $F$ is surjective. The theorem follows.
\end{proof}

In view of Theorem~\ref{t-gue171013}, the study of the space $H^q_m(\ol M)$ is equivalent to the study of the space ${\rm Ker\,}\Box^{(q)}_{\beta,m}$.
The operator $\Box^{(q)}_{\beta,m}$ is similar to Kohn Laplacian $\Box^{(q)}_{b,m}$. The big difference is that $\Box^{(q)}_{\beta,m}$ is self-adjoint with respect to $[\,\cdot\,|\,\cdot\,]$, but $\Box^{(q)}_{b,m}$ is self-adjoint with respect to $(\,\cdot\,|\,\cdot\,)_X$. In order to apply our analysis about $\Box^{(q)}_{b,m}$ (see~\cite{HL15}), we will introduce another operator $\hat\Box^{(q)}_{\beta,m}$ which is similar to $\Box^{(q)}_{\beta,m}$ but self-adjoint with respect to $(\,\cdot\,|\,\cdot\,)_X$.

Let $\tilde Q: L^2(X, T^{*0,q}M')\To L^2_{(0,q)}(X)$ be the orthogonal projection with respect to $(\,\cdot\,|\,\cdot\,)_X$. In view of Remark~\ref{r-gue171013}, we see that the operator $\tilde Q\tilde P^\star\tilde P:\Omega^{0,q}(X)\To\Omega^{0,q}(X)$
is a classical elliptic pseudodifferential operator of order $-1$ and invertible and the operator
$(\tilde Q\tilde P^\star\tilde P)^{-1}:\Omega^{0,q}(X)\To\Omega^{0,q}(X)$
is a classical elliptic pseudodifferential operator of order $1$. Let $(\tilde Q\tilde P^\star\tilde P)^{\frac{1}{2}} :\Omega^{0,q}(X)\To\Omega^{0,q}(X)$
be the square root of $\tilde Q\tilde P^\star\tilde P$ and let
$(\tilde Q\tilde P^\star\tilde P)^{-\frac{1}{2}} : \Omega^{0,q}(X)\To\Omega^{0,q}(X)$
be the square root of $(\tilde Q\tilde P^\star\tilde P)^{-1}$. Then, $(\tilde Q\tilde P^\star\tilde P)^{\frac{1}{2}}$ is a classical elliptic pseudodifferential operator of order $-\frac{1}{2} $ and $(\tilde Q\tilde P^\star\tilde P)^{-\frac{1}{2}}$ is a classical elliptic pseudodifferential operator of order $\frac{1}{2}$. Let
\begin{equation}\label{e-gue171014}
\hat\ddbar_{\beta}:=(\tilde Q\tilde P^{\star}\tilde P)^{\frac{1}{2}}\ddbar_{\beta}(\tilde Q\tilde P^{\star}\tilde P)^{-\frac{1}{2}}: \Omega^{0, q}(X)\rightarrow\Omega^{0,q+1}(X)
\end{equation}
and let $\hat\ddbar_{\beta}^{\star}: \Omega^{0,q+1}(X)\To\Omega^{0,q}(X)$ be the formal adjoint of $\hat\ddbar_\beta$ with respect to $(\,\cdot\,|\,\cdot\,)_X$. By direct calculation, we can check that
\begin{equation}\label{e-gue171014I}
\hat\ddbar_{\beta}^{\star}=(\tilde Q\tilde P^{\star}\tilde P)^{\frac{1}{2}}\ddbar_{\beta}^{\dagger}(\tilde Q\tilde P^{\star}\tilde P)^{-\frac{1}{2}}.
\end{equation}
Moreover, it is easy to see that
\begin{equation}\label{e-gue171014II}
\begin{split}
&T\circ\hat\ddbar_{\beta}=\hat\ddbar_{\beta}\circ T\ \ \mbox{on $\Omega^{0,q}(X)$},\\
&T\circ\hat\ddbar^\star_{\beta}=\hat\ddbar^\star_{\beta}\circ T\ \ \mbox{on $\Omega^{0,q}(X)$}.
\end{split}
\end{equation}
Set
\begin{equation}\label{e-gue171014III}
\hat\Box^{(q)}_{\beta}=\hat\ddbar_{\beta}^{\star}\,\hat\ddbar_{\beta}+\hat\ddbar_{\beta}\,\hat\ddbar_{\beta}^{\star}:\Omega^{0,q}(X)\To\Omega^{0,q}(X).
\end{equation}
From \eqref{e-gue171014II}, we have
\begin{equation}\label{e-gue171014a}
T\circ\hat\Box^{(q)}_{\beta}=\hat\Box^{(q)}_{\beta}\circ T\ \ \mbox{on $\Omega^{0,q}(X)$}.
\end{equation}
We write $\hat\Box^{(q)}_{\beta,m}$ to denote the restriction of $\hat\Box^{(q)}_{\beta}$ to $\Omega^{0,q}_m(X)$.

From Proposition~\ref{t-gue171012}, \eqref{e-gue171014} and \eqref{e-gue171014I}, we deduce that

\begin{proposition}\label{t-gue171014}
We have $\hat\ddbar_\beta$ and $\hat\ddbar^\star_\beta$ are classical pseudodifferential operators of order $1$,
\begin{equation}\label{e-gue171014aI}
\hat\ddbar_{\beta}\circ\hat\ddbar_{\beta}=0\ \ \mbox{on $\Omega^{0,q}(X)$}
\end{equation}
and
\begin{equation}\label{e-gue171014aII}
\begin{split}
&\mbox{$\hat\ddbar_{\beta}=\ddbar_{b}$+lower order terms},\\
&\mbox{$\hat\ddbar_{\beta}^{\star}=\ddbar^{\star}_{b}$+lower order terms}.
\end{split}
\end{equation}
\end{proposition}

Fix $m\in\mathbb Z$, put ${\rm Ker\,}\hat\Box^{(q)}_{\beta,m}:=\set{u\in\Omega^{0,q}_m(X);\, \hat\Box^{(q)}_{\beta,m}u=0}$. It is clear that the map
\begin{equation}\label{e-gue171014t}
\begin{split}
\psi: {\rm Ker\,}\Box^{(q)}_{\beta,m}&\rightarrow{\rm \Ker\,}\hat\Box^{(q)}_{\beta,m},\\
u&\mapsto(\tilde Q\tilde P^{\star}\tilde P)^{\frac{1}{2}}u
\end{split}
\end{equation}
is an isomorphism. Hence, ${\rm Ker\,}\hat\Box^{(q)}_{\beta,m}\cong{\rm Ker\,}\Box^{(q)}_{\beta,m}$, for every $m\in\mathbb Z$. From this observation and Theorem~\ref{t-gue171013}, we deduce that

\begin{theorem}\label{t-gue171014pm}
There is a $\tilde m_0\in\mathbb N$, such that for all $m\geq\tilde m_0$, $m\in\mathbb N$, we have
\[H^{q}_{m}(\ol M)\cong{\rm Ker\,}\hat\Box^{(q)}_{\beta,m}.\]
\end{theorem}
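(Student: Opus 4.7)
The plan is to reduce the statement to Theorem~\ref{t-gue171013} by showing that the conjugation $\psi$ in \eqref{e-gue171014t} is a well-defined isomorphism ${\rm Ker\,}\Box^{(q)}_{\beta,m}\xrightarrow{\cong}{\rm Ker\,}\hat\Box^{(q)}_{\beta,m}$ for every $m\in\mathbb Z$. Once this is established for all $m$, choosing $\tilde m_0$ as in Theorem~\ref{t-gue171013} and composing the two isomorphisms yields the claim.

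The key computation is an intertwining identity. Using the definitions \eqref{e-gue171014} and \eqref{e-gue171014I}, a direct manipulation gives
\begin{equation*}
\hat\ddbar_\beta^{\star}\,\hat\ddbar_\beta
=(\tilde Q\tilde P^{\star}\tilde P)^{\frac{1}{2}}\,\ddbar_\beta^{\dagger}\,\ddbar_\beta\,(\tilde Q\tilde P^{\star}\tilde P)^{-\frac{1}{2}},
\qquad
\hat\ddbar_\beta\,\hat\ddbar_\beta^{\star}
=(\tilde Q\tilde P^{\star}\tilde P)^{\frac{1}{2}}\,\ddbar_\beta\,\ddbar_\beta^{\dagger}\,(\tilde Q\tilde P^{\star}\tilde P)^{-\frac{1}{2}},
\end{equation*}
so that
\begin{equation*}
\hat\Box^{(q)}_\beta
=(\tilde Q\tilde P^{\star}\tilde P)^{\frac{1}{2}}\,\Box^{(q)}_\beta\,(\tilde Q\tilde P^{\star}\tilde P)^{-\frac{1}{2}}
\quad\mbox{on $\Omega^{0,q}(X)$}.
\end{equation*}
Since $(\tilde Q\tilde P^{\star}\tilde P)^{\pm\frac{1}{2}}$ are invertible classical pseudodifferential operators, this identity shows that $u\in{\rm Ker\,}\Box^{(q)}_\beta$ if and only if $(\tilde Q\tilde P^{\star}\tilde P)^{\frac{1}{2}}u\in{\rm Ker\,}\hat\Box^{(q)}_\beta$.

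The next point to check is that $\psi$ respects the $S^1$-grading, i.e. sends $\Omega^{0,q}_m(X)$ into itself. From Lemma~\ref{comu0d} and \eqref{e-gue171030aII} we know that $T$ commutes with $\tilde P$, $\tilde P^\star$ and $\tilde Q$, hence with $\tilde Q\tilde P^\star\tilde P$. By functional calculus $T$ also commutes with $(\tilde Q\tilde P^\star\tilde P)^{\pm\frac{1}{2}}$, so $\psi: \Omega^{0,q}_m(X)\to\Omega^{0,q}_m(X)$ is well-defined with inverse given by multiplication by $(\tilde Q\tilde P^\star\tilde P)^{-\frac{1}{2}}$. Combining this with the intertwining identity gives the isomorphism ${\rm Ker\,}\Box^{(q)}_{\beta,m}\cong{\rm Ker\,}\hat\Box^{(q)}_{\beta,m}$ for every $m\in\mathbb Z$.

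Taking $\tilde m_0$ to be the constant furnished by Theorem~\ref{t-gue171013} and using that theorem in conjunction with $\psi$ completes the proof. I do not expect a substantive obstacle here: the only delicate points are the two bookkeeping checks (the intertwining identity and the commutation of $T$ with the pseudodifferential square root), both of which are immediate from the preceding theorems and the functional calculus on the positive self-adjoint operator $\tilde Q\tilde P^\star\tilde P$.
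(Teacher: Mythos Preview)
Your proposal is correct and follows exactly the paper's approach: the paper simply asserts that the map $\psi$ in \eqref{e-gue171014t} is an isomorphism (``It is clear that\ldots''), and then invokes Theorem~\ref{t-gue171013}. You have merely unpacked the ``clear'' step by writing out the conjugation identity $\hat\Box^{(q)}_\beta=(\tilde Q\tilde P^\star\tilde P)^{1/2}\,\Box^{(q)}_\beta\,(\tilde Q\tilde P^\star\tilde P)^{-1/2}$ and checking that $(\tilde Q\tilde P^\star\tilde P)^{\pm 1/2}$ commute with $T$; one tiny remark is that \eqref{e-gue171030aII} concerns $Q$ rather than $\tilde Q$, but $T\tilde Q=\tilde QT$ follows directly from the $S^1$-invariance of the metric and of $\rho$.
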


\subsection{Technical lemmas for $\hat\Box^{(q)}_\beta$}\label{s-gue200131yyd}
In the last part of this section, we state several technial lemmas for $\hat\Box^{(q)}_\beta$. These lemmas will be needed in the proof of Morse inequalities in Section \ref{s-gue171021}.

We can apply Kohn's $L^2$ estimates to $\hat\Box^{(q)}_\beta$ (see~\cite[Theorem 8.4.2]{CS01}) and get
\begin{proposition}\label{t-gue171016}
For every $s\in\mathbb N_0$, there is a constant $C_s>0$ such that
\begin{equation}\label{e-gue171016j}
\norm{u}_{s+1,X}\leq C_s\Bigr(\norm{\hat\Box^{(q)}_\beta u}_{s,X}+\norm{Tu}_{s,X}+\norm{u}_X\Bigr),\ \ \forall u\in\Omega^{0,q}(X),
\end{equation}
where $\norm{\cdot}_{s,X}$ denotes the usual Sobolev norm of order $s$ on $X$.
\end{proposition}

We need

\begin{lemma}\label{l-gue171016}
Let $P: \Omega^{0,q}(X)\To\Omega^{0,q}(X)$ be a classical pseudodifferential operator of order one. For every $k\in\mathbb N$, we have
\begin{equation}\label{e-gue171016}
(\hat\Box^{(q)}_\beta)^k\circ P=\sum^{k+1}_{j=1}T_j\circ(\hat\Box^{(q)}_\beta)^{k+1-j},
\end{equation}
where $T_j: \Omega^{0,q}(X)\To\Omega^{0,q}(X)$ is a classical pseudodifferential operator of order $j$, $j=1,2,\ldots,k+1$.
\end{lemma}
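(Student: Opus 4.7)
The plan is a straightforward induction on $k$, exploiting two standard facts about the algebra of classical pseudodifferential operators on the compact manifold $X$: (i) the composition of classical pseudodifferential operators of orders $a$ and $b$ is a classical pseudodifferential operator of order $a+b$, and (ii) the commutator of such operators has order $a+b-1$. By Theorem~\ref{t-gue171014}, $\hat\ddbar_\beta$ and $\hat\ddbar^\star_\beta$ are classical pseudodifferential operators of order $1$ on $\Omega^{0,\bullet}(X)$, so $\hat\Box^{(q)}_\beta=\hat\ddbar^\star_\beta\,\hat\ddbar_\beta+\hat\ddbar_\beta\,\hat\ddbar^\star_\beta$ is a classical pseudodifferential operator of order $2$. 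In particular the claimed order balance on the right-hand side of \eqref{e-gue171016} (order $j+2(k+1-j)=2k+2-j$, which is $\leq 2k+1$) is consistent with the order $2k+1$ of the left-hand side.

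For the base case $k=1$, I would simply write
\[
\hat\Box^{(q)}_\beta\circ P=P\circ\hat\Box^{(q)}_\beta+[\hat\Box^{(q)}_\beta,P],
\]
and set $T_1:=P$ (order $1$) and $T_2:=[\hat\Box^{(q)}_\beta,P]$ (order $2+1-1=2$).

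For the inductive step, assume \eqref{e-gue171016} holds for some $k\geq 1$ with operators $T_1,\ldots,T_{k+1}$ of the stated orders. Then
\[
(\hat\Box^{(q)}_\beta)^{k+1}\circ P
=\hat\Box^{(q)}_\beta\circ\sum_{j=1}^{k+1}T_j\circ(\hat\Box^{(q)}_\beta)^{k+1-j}
=\sum_{j=1}^{k+1}\bigl(T_j\,\hat\Box^{(q)}_\beta+[\hat\Box^{(q)}_\beta,T_j]\bigr)\circ(\hat\Box^{(q)}_\beta)^{k+1-j}.
\]
Collecting terms by the power of $\hat\Box^{(q)}_\beta$ appearing on the right, set
\[
\widetilde T_1:=T_1,\qquad \widetilde T_j:=T_j+[\hat\Box^{(q)}_\beta,T_{j-1}]\ (2\leq j\leq k+1),\qquad \widetilde T_{k+2}:=[\hat\Box^{(q)}_\beta,T_{k+1}].
\]
Using (ii), $[\hat\Box^{(q)}_\beta,T_{j-1}]$ is a classical pseudodifferential operator of order $2+(j-1)-1=j$, so each $\widetilde T_j$ has order $j$, and
\[
(\hat\Box^{(q)}_\beta)^{k+1}\circ P=\sum_{j=1}^{k+2}\widetilde T_j\circ(\hat\Box^{(q)}_\beta)^{k+2-j},
\]
which is precisely \eqref{e-gue171016} at level $k+1$.

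There is no real obstacle; the proof is purely algebraic once one invokes the order arithmetic of the pseudodifferential calculus. The only care needed is the reindexing in the inductive step so that the coefficients of a given power of $\hat\Box^{(q)}_\beta$ are grouped correctly, and the verification that the newly produced commutators sit in the correct order slot.
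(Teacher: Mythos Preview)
Your proof is correct and follows the same induction as the paper's own argument, with identical base case and the same regrouping $\widetilde T_j=T_j+[\hat\Box^{(q)}_\beta,T_{j-1}]$ in the inductive step. One small refinement: your fact (ii) as stated is not automatic for operators acting on sections of a vector bundle, since the top-order symbol of $[A,B]$ is the \emph{matrix} commutator $\sigma(A)\sigma(B)-\sigma(B)\sigma(A)$; the paper handles this by observing that the principal symbol of $\hat\Box^{(q)}_\beta$ is scalar (it agrees with that of $\Box^{(q)}_b$ by \eqref{e-gue171014aII}), which is what forces the order drop in $[\hat\Box^{(q)}_\beta,T_{j-1}]$.
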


\begin{proof}
We have $\hat\Box^{(q)}_\beta\circ P=P\circ\hat\Box^{(q)}_\beta+[P, \hat\Box^{(q)}_\beta]$. Since the principal symbol of $\hat\Box^{(q)}_\beta$ is scalar,
$[P, \hat\Box^{(q)}_\beta] :\Omega^{0,q}(X)\To\Omega^{0,q}(X)$ is a pseudodifferential operator of order $2$. Thus, \eqref{e-gue171016} holds for $k=1$. Assume that
\eqref{e-gue171016} holds for $k_0\in\mathbb N$. We want to show that \eqref{e-gue171016} holds for $k_0+1$. By the induction assumption, we have
\begin{equation}\label{e-gue171016r}
(\hat\Box^{(q)}_\beta)^{k_0}\circ P=\sum^{k_0+1}_{j=1}T_j\circ(\hat\Box^{(q)}_\beta)^{k_0+1-j},
\end{equation}
where $T_j: \Omega^{0,q}(X)\To\Omega^{0,q}(X)$ is a classical pseudodifferential operator of order $j$. From \eqref{e-gue171016r}, we have
\begin{equation}\label{e-gue171016rI}
\begin{split}
(\hat\Box^{(q)}_\beta)^{k_0+1}\circ P&=(\hat\Box^{(q)}_\beta)\circ\Bigr(\sum^{k_0+1}_{j=1}T_j\circ(\hat\Box^{(q)}_\beta)^{k+1-j})\\
&=\sum^{k_0+1}_{j=1}T_j\circ(\hat\Box^{(q)}_\beta)^{k_0+2-j}+\sum^{k_0+1}_{j=1}[\hat\Box^{(q)}_\beta, T_j]\circ (\hat\Box^{(q)}_\beta)^{k_0+1-j}\\
&=T_1\circ(\hat\Box^{(q)}_\beta)^{k_0+1}+\sum^{k_0+1}_{j=2}\Bigr(T_j+[\hat\Box^{(q)}_\beta, T_{j-1}]\Bigr)\circ(\hat\Box^{(q)}_\beta)^{k_0+2-j}+[\hat\Box^{(q)}_\beta, T_{k_0+1}].
\end{split}
\end{equation}
Note that $T_j+[\hat\Box^{(q)}_\beta, T_{j-1}]: \Omega^{0,q}(X)\To\Omega^{0,q}(X)$ is a classical pseudodifferential operator of order $j$. From this observation and \eqref{e-gue171016rI}, we get \eqref{e-gue171016} for $k=k_0+1$. By the induction assumption, \eqref{e-gue171016} follows.
\end{proof}

We can now prove

\begin{lemma}\label{t-gue171019}
Fix $m\in\mathbb Z$. For every $k\in\mathbb N$, there is a constant $C_k>0$ independent of $m$ such that
\begin{equation}\label{e-gue171019}
\norm{(\frac{1}{m}\Box^{(q)}_b)^ku}_X\leq C_k\Bigr(\sum^{k}_{\ell=0}\norm{(\frac{1}{m}\hat\Box^{(q)}_\beta)^\ell u}_X\Bigr),\ \ \forall u\in\Omega^{0,q}_m(X).
\end{equation}
\end{lemma}

\begin{proof}
We have
\begin{equation}\label{e-gue171019I}
\Box^{(q)}_b=\hat\Box^{(q)}_\beta+P,
\end{equation}
where $P: \Omega^{0,q}(X)\To\Omega^{0,q}(X)$ is a classical pseudodifferential operator of order one. From \eqref{e-gue171019I} and \eqref{e-gue171016j}, for $u\in\Omega^{0,q}_m(X)$, we have
\begin{equation}\label{e-gue171019II}
\begin{split}
\norm{(\frac{1}{m}\Box^{(q)}_b)u}_X&\leq \norm{(\frac{1}{m}\hat\Box^{(q)}_\beta)u}_X+\frac{1}{m}\norm{Pu}_X\\
&\leq C_1\Bigr( \norm{(\frac{1}{m}\hat\Box^{(q)}_\beta)u}_X+\frac{1}{m}\norm{u}_{1,X}\Bigr)\\
&\leq C_2\Bigr(\norm{(\frac{1}{m}\hat\Box^{(q)}_\beta)u}_X+\frac{1}{m}\norm{Tu}_{X}+\frac{1}{m}\norm{u}_X\Bigr)\\
&\leq C_3\Bigr(\norm{(\frac{1}{m}\hat\Box^{(q)}_\beta)u}_X+\norm{u}_{X}\Bigr),
\end{split}
\end{equation}
where $C_1>0$, $C_2>0$, $C_3>0$ are constants independent of $m$ and $u$. From \eqref{e-gue171019II}, we see that \eqref{e-gue171019} holds for $k=1$. We assume that \eqref{e-gue171019} holds for $k=k_0\in\mathbb N$. We are going to prove that \eqref{e-gue171019} holds for $k=k_0+1$. Fix $u\in\Omega^{0,q}_m(X)$. By the induction assumption, we have
\begin{equation}\label{e-gue171019III}
\norm{(\frac{1}{m}\Box^{(q)}_b)^{k_0+1}u}_X\leq C\Bigr(\sum^{k_0}_{\ell=0}\norm{(\frac{1}{m}\hat\Box^{(q)}_\beta)^\ell(\frac{1}{m}\Box^{(q)}_bu)}_X\Bigr),
\end{equation}
where $C>0$ is a constant independent of $m$ and $u$. Fix $\ell\in\set{0,1,\ldots,k_0}$. From \eqref{e-gue171016} and \eqref{e-gue171019I}, we have
\begin{equation}\label{e-gue171019a}
\begin{split}
(\frac{1}{m}\hat\Box^{(q)}_\beta)^\ell(\frac{1}{m}\Box^{(q)}_bu)&=\frac{1}{m^{\ell+1}}(\hat\Box^{(q)}_\beta)^\ell(\hat\Box^{(q)}_\beta+P)u\\
&=\frac{1}{m^{\ell+1}}(\hat\Box^{(q)}_\beta)^{\ell+1}u+\frac{1}{m^{\ell+1}}\sum^{\ell+1}_{j=1}T_j\circ(\hat\Box^{(q)}_\beta)^{\ell+1-j}u,
\end{split}
\end{equation}
where $T_j: \Omega^{0,q}(X)\To\Omega^{0,q}(X)$ is a classical pseudodifferential operator of order $j$, $j=1,\ldots,\ell+1$. From \eqref{e-gue171016j}, for every $j=1,\ldots,\ell+1$, we have
\begin{equation}\label{e-gue171019aI}
\begin{split}
&\norm{T_j\circ(\hat\Box^{(q)}_\beta)^{\ell+1-j}u}_X\\
&\leq C_1\norm{(\hat\Box^{(q)}_\beta)^{\ell+1-j}u}_{j,X}\\
&\leq C_2\Bigr(\norm{(\hat\Box^{(q)}_\beta)^{\ell+2-j}u}_{j-1,X}+m\norm{(\hat\Box^{(q)}_\beta)^{\ell+1-j}u}_{j-1,X}\Bigr)\\
&\leq C_3\Bigr(\norm{(\hat\Box^{(q)}_\beta)^{\ell+3-j}u}_{j-2,X}+m\norm{(\hat\Box^{(q)}_\beta)^{\ell+2-j}u}_{j-2,X}+m^2\norm{(\hat\Box^{(q)}_\beta)^{\ell+1-j}u}_{j-2,X}\Bigr)\\
&\leq C_4\Bigr(\sum^{\ell+1}_{k=j}m^k\norm{(\hat\Box^{(q)}_\beta)^{\ell+1-k}u}_{X}\Bigr),
\end{split}
\end{equation}
where $C_1>0$, $C_2>0$, $C_3>0$, $C_4>0$ are constants independent of $m$ and $u$.
From \eqref{e-gue171019a} and \eqref{e-gue171019aI}, we get \eqref{e-gue171019} for $k=k_0+1$. The theorem follows.
\end{proof}

We need

\begin{lemma}\label{t-gue171019I}
Fix $m\in\mathbb Z$. For every $k\in\mathbb N$, there is a constant $\hat C_k>0$ independent of $m$ such that
\begin{equation}\label{e-gue171019y}
\norm{(\frac{1}{m}\Box^{(q)}_b)^ku}^2_X\leq\hat C_k\Bigr(\sum^{2k-1}_{\ell=0}\norm{(\frac{1}{m}\hat\Box^{(q)}_\beta)^\ell u}_X\Bigr)
\Bigr(\norm{(\frac{1}{m}\hat\Box^{(q)}_\beta)u}_X+\frac{1}{\sqrt{m}}\norm{u}_X\Bigr),\ \ \forall u\in\Omega^{0,q}_m(X).
\end{equation}
\end{lemma}

\begin{proof}
We first claim that there is a constant $C>0$ independent of $m$ such that
\begin{equation}\label{e-gue171019pmy}
\norm{(\frac{1}{m}\Box^{(q)}_b)u}_X\leq C\Bigr(\norm{(\frac{1}{m}\hat\Box^{(q)}_\beta)u}_X+\frac{1}{\sqrt{m}}\norm{u}_X\Bigr),\ \ \forall u\in\Omega^{0,q}_m(X).
\end{equation}
From \eqref{e-gue171014aI}, we have $\ddbar_b=\hat\ddbar_\beta+r^{(q)}_1:\Omega^{0,q}(X)\To\Omega^{0,q+1}(X)$, $\ddbar^\star_b=\hat\ddbar^\star_\beta+r^{(q+1)}_2: \Omega^{0,q+1}(X)\To\Omega^{0,q}(X)$, where $r^{(q)}_1$ and $r^{(q+1)}_2$ are psudodifferential operators of order $0$. We have
\begin{equation}\label{e-gue171014pmyI}
\begin{split}
\Box^{(q)}_b&=\ddbar_b\,\ddbar^\star_b+\ddbar^\star_b\,\ddbar_b=(\hat\ddbar_\beta+r^{(q-1)}_1)(\hat\ddbar^\star_\beta+r^{(q)}_2)+(\hat\ddbar^\star_\beta+r^{(q+1)}_2)(\hat\ddbar_\beta+r^{(q)}_1)\\
&=\hat\Box^{(q)}_\beta+r^{(q-1)}_1\circ\hat\ddbar^\star_\beta+\hat\ddbar_\beta\circ r^{(q)}_2+
r^{(q+1)}_2\circ\hat\ddbar_\beta+\hat\ddbar^\star_\beta\circ r^{(q)}_1+r^{(q+1)}_2\circ r^{(q)}_1+r^{(q-1)}_1\circ r^{(q)}_2.
\end{split}
\end{equation}
Let $u\in\Omega^{0,q}_m(X)$. It is not difficult to check that
\begin{equation}\label{e-gue171018z}
\begin{split}
&\norm{(r^{(q-1)}_1\circ\hat\ddbar^\star_\beta)u}^2_X+\norm{(r^{(q+1)}_2\circ\hat\ddbar_\beta)u}^2_X+\norm{(r^{(q+1)}_2\circ r^{(q)}_1+r^{(q-1)}_1\circ r^{(q)}_2)u}^2_X\\
&\leq C\Bigr(\norm{\hat\ddbar^\star_\beta u}^2_X+\norm{\hat\ddbar_\beta u}^2_X+\norm{u^2}_X\Bigr)\\
&\leq C_1\Bigr(\norm{\hat\Box^{(q)}_\beta u}^2_X+\norm{u}^2_X\Bigr),
\end{split}
\end{equation}
where $C>0$, $C_1>0$ are constants independent of $m$ and $u$. We have
\begin{equation}\label{e-gue171018zI}
\begin{split}
&\norm{(\hat\ddbar_\beta\circ r^{(q)}_2)u}^2_X\leq \norm{(\hat\ddbar_\beta\circ r^{(q)}_2)u}^2_X+\norm{(\hat\ddbar^\star_\beta\circ r^{(q)}_2)u}^2_X\\
&\leq (\,(\hat\Box^{(q-1)}_\beta\circ r^{(q)}_2)u\,|\,r^{(q)}_2u\,)_X.\end{split}
\end{equation}
Now,
\begin{equation}\label{e-gue171018zII}
\begin{split}
&(\,(\hat\Box^{(q-1)}_\beta\circ r^{(q)}_2)u\,|\,r^{(q)}_2u\,)_X\\
&=(\,(r^{(q)}_2\circ\hat\Box^{(q)}_\beta)u\,|\,r^{(q)}_2u\,)_X+((\hat\Box^{(q-1)}_\beta\circ r^{(q)}_2-r^{(q)}_2\circ\hat\Box^{(q)}_\beta)u\,|\,r^{(q)}_2u\,)_X.
\end{split}
\end{equation}
Since the principal symbol of $\hat\Box^{(q)}_\beta$ is scalar, we deduce that the operator
\[\hat\Box^{(q-1)}_\beta\circ r^{(q)}_2-r^{(q)}_2\circ\hat\Box^{(q)}_\beta:\Omega^{0,q}(X)\To\Omega^{0,q-1}(X)\]
is a pseudodifferential operator of order $1$. From this observation, \eqref{e-gue171018zI}, \eqref{e-gue171018zII} and \eqref{e-gue171016j}, we have
\begin{equation}\label{e-gue171018zIII}
\begin{split}
\norm{(\hat\ddbar_\beta\circ r^{(q)}_2)u}^2_X&\leq C\Bigr(\norm{\hat\Box^{(q)}_\beta u}^2_X+\norm{u}^2_X+\norm{u}_{1,X}\norm{u}_X\Bigr)\\
&\leq C_1\Bigr(\norm{\hat\Box^{(q)}_\beta u}^2_X+\norm{u}^2_X+\norm{Tu}_{X}\norm{u}_X\Bigr)\\
&\leq C_2\Bigr(\norm{\hat\Box^{(q)}_\beta u}^2_X+\norm{u}^2_X+m\norm{u}^2_X\Bigr),
\end{split}
\end{equation}
where $C>0$, $C_1>0$ and $C_2>0$ are constants independent of $m$ and $u$. We can repeat the procedure \eqref{e-gue171018zIII} with minor change and get
\begin{equation}\label{e-gue171018za}
\norm{(\hat\ddbar^\star_\beta\circ r^{(q)}_1)u}^2_X\leq C_0\Bigr(\norm{\hat\Box^{(q)}_\beta u}^2_X+\norm{u}^2_X+m\norm{u}^2_X\Bigr),
\end{equation}
where $C_0>0$ is a constant independent of $m$ and $u$. From \eqref{e-gue171014pmyI}, \eqref{e-gue171018z}, \eqref{e-gue171018zIII} and \eqref{e-gue171018za},
we get \eqref{e-gue171019pmy}.

From \eqref{e-gue171019}, \eqref{e-gue171019pmy}, for $u\in\Omega^{0,q}_m(X)$, we have
\[\begin{split}
\norm{(\frac{1}{m}\Box^{(q)}_b)^ku}^2_X&=(\,(\frac{1}{m}\Box^{(q)}_b)^ku\,|\,(\frac{1}{m}\Box^{(q)}_b)^ku\,)_X\\
&=(\,(\frac{1}{m}\Box^{(q)}_b)^{2k-1}u\,|\,(\frac{1}{m}\Box^{(q)}_b)u\,)_X\\
&\leq\norm{(\frac{1}{m}\Box^{(q)}_b)^{2k-1}u}_X\norm{(\frac{1}{m}\Box^{(q)}_b)u}\\
&\leq\hat C_k\Bigr(\sum^{2k-1}_{\ell=0}\norm{(\frac{1}{m}\hat\Box^{(q)}_\beta)^\ell)u}_X\Bigr)
\Bigr(\norm{(\frac{1}{m}\hat\Box^{(q)}_\beta)u}_X+\frac{1}{\sqrt{m}}\norm{u}_X\Bigr),
\end{split}\]
where $\hat C_k>0$ is a constant independent of $m$ and $u$. We get \eqref{e-gue171019y}.
\end{proof}

We can repeat the proof of Lemma~\ref{t-gue171019I} and deduce

\begin{lemma}\label{t-gue171019II}
Fix $m\in\mathbb Z$. For every $k\in\mathbb N$, there is a constant $\Td C_k>0$ independent of $m$ such that
\begin{equation}\label{e-gue171019yI}
\norm{(\frac{1}{m}\hat\Box^{(q)}_\beta)^ku}^2_X\leq\Td C_k\Bigr(\sum^{2k-1}_{\ell=0}\norm{(\frac{1}{m}\Box^{(q)}_b)^\ell u}_X\Bigr)
\Bigr(\norm{(\frac{1}{m}\Box^{(q)}_b)u}_X+\frac{1}{\sqrt{m}}\norm{u}_X\Bigr),\ \ \forall u\in\Omega^{0,q}_m(X).
\end{equation}
\end{lemma}

\section{Index theorem} \label{s-gue171023}
In this section, we are devoted to proving Theorem ~\ref{t-gue171002}. It should be pointed out that in this section, we do not need any of the results stated in Section~\ref{s-gue200131yyd}.
Fix $m\in\mathbb Z$.
Let $\hat\ddbar_\beta:\Omega^{0,q}_m(X)\rightarrow\Omega^{0,q+1}_m(X)$ be the operator given by \eqref{e-gue171014}. Since $\hat\ddbar_\beta^2=0$,
we have $\hat\ddbar_\beta$-complex:
\begin{equation}\label{complexbm}
\cdot\cdot\cdot\rightarrow\Omega^{0,q-1}_{m}(X)\xrightarrow{\hat\ddbar_\beta}\Omega^{0,q}_{m}(X)
\xrightarrow{\hat\ddbar_\beta}\Omega^{0,q+1}_{m}(X)\rightarrow\cdot\cdot\cdot
\end{equation}
The $m$-th Fourier component of $\hat\ddbar_\beta$-cohomology group is given by
\begin{equation}\label{e-gue171023y}
\hat H^{q}_{\beta,m}(X):=\frac{{\rm Ker\,}\hat\ddbar_\beta: \Omega_m^{0, q}(X)\rightarrow\Omega_m^{0, q+1}(X)}{{\rm Im\,}\hat\ddbar_\beta: \Omega_m^{0, q-1}(X)\rightarrow\Omega_m^{0, q}(X)}.
\end{equation}
We can repeat the proof of Theorem 3.7 in~\cite{CHT} and deduce that
\begin{equation}\label{e-gue171023qp}
\hat H^{q}_{\beta,m}(X)\cong{\rm Ker\,}\hat\Box^{(q)}_{\beta,m},\ \ \forall q=0,1,\ldots,n-1,\ \ \forall m\in\mathbb Z.
\end{equation}

Let $\ddbar_b:\Omega^{0,q}_m(X)\rightarrow\Omega^{0,q+1}_m(X)$ be the tangential Cauchy-Riemann operator. We have $\ddbar_b$-complex:
\begin{equation}\label{complexbmb}
\cdot\cdot\cdot\rightarrow\Omega^{0,q-1}_{m}(X)\xrightarrow{\ddbar_b}\Omega^{0,q}_{m}(X)
\xrightarrow{\ddbar_b}\Omega^{0,q+1}_{m}(X)\rightarrow\cdot\cdot\cdot
\end{equation}
The $m$-th Fourier component of Kohn-Rossi cohomology group is given by
\begin{equation}\label{e-gue171023ya}
H^{q}_{b,m}(X):=\frac{{\rm Ker\,}\ddbar_b: \Omega_m^{0, q}(X)\rightarrow\Omega_m^{0, q+1}(X)}{{\rm Im\,}\ddbar_b: \Omega_m^{0, q-1}(X)\rightarrow\Omega_m^{0, q}(X)}.
\end{equation}
From \eqref{e-gue171014aII}, we know that $\hat\ddbar_{\beta}=\ddbar_{b}$+lower order terms. Since the index is homotopy invariant (see Theorem 4.7 in~\cite{CHT}), we deduce that
\begin{equation}\label{e-gue171023yaI}
\sum^{n-1}_{j=0}(-1)^j{\rm dim\,}H^{j}_{b,m}(X)=\sum^{n-1}_{j=0}(-1)^j{\rm dim\,}\hat H^{j}_{\beta,m}(X),\ \ \forall m\in\mathbb Z.
\end{equation}
From \eqref{e-gue171023yaI}, \eqref{e-gue171023qp} and Theorem~\ref{t-gue171014pm}, we conclude that for $m\gg1$, we have
\begin{equation}\label{e-gue171023yaII}
\sum^{n-1}_{j=0}(-1)^j{\rm dim\,}H^{j}_{m}(\ol M)=\sum^{n-1}_{j=0}(-1)^j{\rm dim\,}H^{j}_{b,m}(X).
\end{equation}
The formula for $\sum^{n-1}_{j=0}(-1)^j{\rm dim\,}H^{j}_{b,m}(X)$ was obtained by~\cite{CHT}. What follows is just a repetition of the results of ~\cite{CHT}.


\begin{definition}\label{d-gue50508d}
Let $D\subset X$ be an open set and $u\in C^\infty(D)$.
We say that $u$ is {\it rigid} if $Tu=0$, $u$ is Cauchy-Riemann (CR for short)
if $\overline\partial_bu=0$ and $u$ is a rigid CR function if $\overline\partial_bu=0$ and $Tu=0$.
\end{definition}

\begin{definition}
\label{d-gue150508dI} Let $F$ be a complex vector bundle of rank $r$ over $X$. We say
that $F$ is {\it rigid} (resp. CR) if $X$ can be covered by open subsets $U_j$ with
trivializing frames $\{f^1_j,f^2_j,\dots,f^r_j\}$ such that the corresponding transition functions
are rigid (resp. CR) (in the sense of the
preceding definition).
\end{definition}

\begin{definition}\label{d-gue180810}
Let $F$ be a complex vector bundle of rank $r$ over $X$. Fix open cover $(U_j)^N_{j=1}$ of $X$ and a family $\set{f^1_j,f^2_j,\dots,f^r_j}^N_{j=1}$ of trivializing frames $\set{f^1_j,f^2_j,\dots,f^r_j}$
on each $U_j$ such that the entries of the transition matrices between different frames $\set{f^1_j,f^2_j,\dots,f^r_j}$ are rigid (CR).
For any frame $\set{\hat f^1,\hat f^2,\dots,\hat f^r}$ of $F$ define on an open set $D$ of $X$, we say that $\set{\hat f_1,\hat f_2,\dots,\hat f_r}$ is rigid (CR) with respect to $\set{f^1_j,f^2_j,\dots,f^r_j}^N_{j=1}$ if all the entires of the transition matrices between $\set{\hat f^1,\hat f^2,\dots,\hat f^r}$ and $\set{f^1_j,f^2_j,\dots,f^r_j}$ are annihilated by $T$, for every $j=1,\ldots,N$, with $D\bigcap U_j\neq\emptyset$.
\end{definition}

Let $F$ be a rigid (CR) vector bundle over $X$. In this work, we fix open cover $(U_j)^N_{j=1}$ of $X$ and  a family $\set{f^1_j,f^2_j,\dots,f^r_j}^N_{j=1}$ of trivializing frames $\set{f^1_j,f^2_j,\dots,f^r_j}$ on each $U_j$ such that the entries of the transition matrices between different frames $\set{f^1_j,f^2_j,\dots,f^r_j}$ are rigid (CR). We say that a frame $f$ is rigid (CR) if $f$ is rigid (CR) with respect to $\set{f^1_j,f^2_j,\dots,f^r_j}^N_{j=1}$.
We can define the operator $T$ on $\Omega^{0,q}(X,F)$.
Indeed, every $u\in\Omega^{0,q}(X,F)$ can be written on $U_j$ as
$u=\sum u_\ell\otimes f^\ell_j$ and we set $Tu=\sum Tu_\ell\otimes f^\ell_j$.
Then $Tu$ is well defined as element of $\Omega^{0,q}(X,F)$,
since the entries of the transition matrices between different frames $\set{f^1_j,f^2_j,\dots,f^r_j}$
are annihilated by $T$.

Let $X$ be a compact CR manifold  with a locally free transversal CR $S^1$ action.
In this work, we say that a trivializing frame $f$ of $T^{1,0}X$ is rigid if $f$ is rigid with respect to BRT frames (see Lemma~\ref{t-gue150514} for the meaning of BRT frames).  By using duality, we can also define BRT frames for the bundle $\oplus^{2n-1}_{r=1}\Lambda^r(\Complex T^*X)$ and we say that a trivializing frame $f$ of $\oplus^{2n-1}_{r=1}\Lambda^r(\Complex T^*X)$ is rigid if $f$ is rigid with respect to BRT frames.

For $r=0,1,2,\ldots,2n-2$, put $\Omega^r_{0}(X)=\left\{u\in
\oplus_{p+q=r}\Omega^{p,q}(X);\, Tu=0\right\}$ and set $\Omega^%
\bullet_{0}(X)=\oplus_{r=0}^{2n-2}\Omega^r_{0}(X)$. Since $Td=dT$, we have $d$-complex:
\begin{equation*}
d:\cdots\rightarrow\Omega^{r-1}_{0}(X)\rightarrow\Omega^{r}_{0}(X)%
\rightarrow\Omega^{r+1}_{0}(X)\rightarrow\cdots
\end{equation*}
Define the $r$-th tangential de Rham cohomology group:
\begin{equation*}
\mathcal{H}%
^r_{b,0}(X):=\frac{\mathrm{Ker\,}d:\Omega^{r}_{0}(X)\rightarrow%
\Omega^{r+1}_{0}(X)}{\mathrm{Im\,}d:\Omega^{r-1}_{0}(X)\rightarrow%
\Omega^{r}_{0}(X)}.
\end{equation*}
Put $\mathcal{H}^\bullet_{b,0}(X)=\oplus_{r=0}^{2n-2}%
\mathcal{H}^r_{b,0}(X)$.

Let a complex vector bundle $F$ over $X$ of rank $r$ be {\it rigid} as in Definition
\ref{d-gue150508dI}. It was shown
in \cite[Theorem~2.12]{CHT} that there exists a connection $\nabla$ on $F$ such
that for any rigid local frame $f=(f_1,f_2,\ldots,f_r)$ of $F$ on an open
set $D\subset X$, the connection matrix $\theta(\nabla,f)=\left(\theta_{j,k}%
\right)^r_{j,k=1}$ satisfies
\begin{equation*}
\theta_{j,k}\in\Omega^1_{0}(D),
\end{equation*}
for $j,k=1,\ldots,r$. We call $\nabla$ as such a {\it rigid connection} on $F$. Let
\[\Theta(\nabla,F)\in C^\infty(X,\Lambda^2(\mathbb{C }T^*X)\otimes\mathrm{End\,%
}(F))\]
be the associated {\it tangential curvature}.

Let $h(z)=\sum^\infty_{j=0}a_jz^j$ be a
real power series on $z\in\mathbb{C}$. Set
\begin{equation*}
H(\Theta(\nabla,F))=\mathrm{Tr\,}\Bigr(h\bigr(\frac{i}{2\pi}\Theta(\nabla,F)%
\bigr)\Bigr).
\end{equation*}
It is clear that $H(\Theta(\nabla,F))\in\Omega^{*}_{0}(X)$ and is known that $H(\Theta(\nabla,F))\in\Omega^{*}_{0}(X)$ is a closed differential form and the tangential de Rham cohomology class
\begin{equation*}
[H(\Theta(\nabla,F))]\in\mathcal{H}^\bullet_{b,0}(X)
\end{equation*}
does not depend on the choice of rigid connections $\nabla$, cf. \cite[Theorem 2.6, Theorem 2.7]{CHT}.
For  $h(z)=e^{z}$ put
\begin{equation}  \label{e-gue160607}
\mathrm{ch_b\,}(\nabla,F):=H(\Theta(\nabla,F))\in\Omega^\bullet_{0}(X),
\end{equation}
and for
$h(z)=\log (\frac{z}{1-e^{-z}})$ set
\begin{equation}  \label{e-gue150607I}
\mathrm{Td_b\,}(\nabla,F):=e^{H(\Theta(\nabla,F))}\in\Omega^\bullet_{0}(X).
\end{equation}

We can now introduce tangential Todd
class and tangential Chern character.

\begin{definition}\label{d-gue150516}
The {\it tangential Chern character} of $F$ is given by
\begin{equation*}
\mathrm{ch_b\,}(F):=[\mathrm{ch_b\,}(\nabla,F)]\in\mathcal{H}%
^\bullet_{b,0}(X)
\end{equation*}
and the {\it tangential Todd class} of $F$ is given by
\begin{equation*}
\mathrm{Td_b\,}(F)=[\mathrm{Td_b\,}(\nabla,F)]\in\mathcal{H}%
^\bullet_{b,0}(X).
\end{equation*}
\end{definition}



Baouendi-Rothschild-Treves~\cite{BRT85} proved that $T^{1,0}X$ is a rigid
complex vector bundle over $X$. The tangential Todd class of
$T^{1,0}X$ is thus well defined.

In~\cite{CHT}, it was shown that for every $m\in\mathbb Z$,
\begin{equation}\label{e-gue171023qqb}
\sum_{j=0}^{n-1}(-1)^j{\rm dim\,}H^{q}_{b,m}(X)=\frac{1}{2\pi}\int_X \mathrm{Td_b\,}(T^{1,0}X) \wedge e^{-m\frac{d\omega_0}{2\pi}} \wedge(-\omega_0).
\end{equation}
From \eqref{e-gue171023qqb} and \eqref{e-gue171023yaII} we have the following Index theorem for $\overline\partial_m$.
\begin{theorem}\label{t-gue1710021}
	With the notations and assumptions above, there is a $m_0>0$ such that for every $m\in\mathbb Z$ with $m\geq m_0$, we have
	\begin{equation}\label{e-gue1710021}
	\sum_{j=0}^{n-1}(-1)^j{\rm dim\,}H^{j}_{m}(\ol M)=\frac{1}{2\pi}\int_X \mathrm{Td_b\,}(T^{1,0}X) \wedge e^{-m\frac{d\omega_0}{2\pi}} \wedge(-\omega_0),
	\end{equation}
	where $\mathrm{Td_b\,}(T^{1,0}X)$ denotes the \emph{tangential Todd class} of $T^{1,0}X$ (see Definition~\ref{d-gue150516}).
\end{theorem}

\section{The scaling technique}\label{s-gue171019}

The main goal of the rest of this paper is to prove Theorem~\ref{t-gue170930}, that is, to get Morse inequalities for $H^q_m(\ol M)$ when $m\To+\infty$. 
 In view of Theorem~\ref{t-gue171014pm}, we need to estimate ${\rm dim\,}{\rm Ker\,}\hat\Box^{(q)}_{\beta,m}$ when $m\To+\infty$. Let $u\in{\rm Ker\,}\hat\Box^{(q)}_{\beta,m}$, $\norm{u}_X=1$. From Lemma~\ref{t-gue171019I}, we see that $\norm{(\frac{1}{m}\Box^{(q)}_b)^ku}^2_X\leq C_k$, for every $k\in\mathbb N$, where $C_k>0$ is a constant independent of $m$ and $u$.  From this observation, we see that we can apply  the scaling technique for $\Box^{(q)}_b$ used in 
 ~\cite [Section 1.4]{HL15} to estimate pointwise norm of $u\in {\rm Ker\,}\hat\Box^{(q)}_{\beta,m}$ and study  ${\rm dim\,}{\rm Ker\,}\hat\Box^{(q)}_{\beta,m}$ when $m\To+\infty$ (see Section~\ref{s-gue171021}). For the convenience of the reader, 
 in this section, we will recall the scaling technique used in~\cite [Section 1.4]{HL15}.
 
We need the following result due to Baouendi-Rothschild-Treves~\cite{BRT85}.

\begin{lemma}\label{t-gue150514}
For every point $x_0\in X$, we can find local coordinates $x=(x_1,\cdots,x_{2n-1})=(z,\theta)=(z_1,\cdots,z_{n-1},\theta), z_j=x_{2j-1}+ix_{2j},j=1,\cdots,n-1, x_{2n-1}=\theta$, defined in some small neighborhood $D=\{(z, \theta)\in\Complex^{n-1}\times\Real;\, \abs{z}<\delta, -\varepsilon_0<\theta<\varepsilon_0\}$ of $x_0$, $\delta>0$, $0<\varepsilon_0<\pi$, such that $(z(x_0),\theta(x_0))=(0,0)$ and
\begin{equation}\label{e-can}
\begin{split}
&T=\frac{\partial}{\partial\theta},\\
&Z_j=\frac{\partial}{\partial z_j}+i\frac{\partial\varphi}{\partial z_j}(z)\frac{\partial}{\partial\theta},\ \ j=1,\cdots,n-1,
\end{split}
\end{equation}
where $Z_j(x), j=1,\cdots, n-1$, form a basis of $T_x^{1,0}X$, for each $x\in D$ and $\varphi(z)\in C^\infty(D,\mathbb R)$ independent of $\theta$.  We call $(D,x=(z,\theta),\varphi)$ BRT trivialization, $\set{Z_j}^{n-1}_{j=1}$ BRT frame and we call $x=(z,\theta)$ canonical coordinates.
\end{lemma}

\begin{remark}\label{r-gue171021}
It is well-known that (see Lemma 1.17 in~\cite{HL15}) if $x_0\in X_{{\rm reg\,}}$, the canonical coordinates $(z,\theta)$ introduced in Lemma~\ref{t-gue150514} can be defined on  $D=\{(z, \theta)\in\Complex^{n-1}\times\Real;\, \abs{z}<\delta, -\pi<\theta<\pi\}$, for some $\delta>0$. Recall that $X_{{\rm reg\,}}$ is given by \eqref{e-gue171021y}.
\end{remark}

Now, we fix $x_0\in X$. Let $(D,x=(z,\theta),\varphi)$ be a BRT trivialization such that $x(x_0)=0$, where $D=\{(z, \theta)\in\Complex^{n-1}\times\Real;\,
\abs{z}<\delta, -\varepsilon_0<\theta<\varepsilon_0\}$, $\delta>0$, $0<\varepsilon_0<\pi$. It is easy to see that we can take $\varphi$ and $(z, \theta)$ so that
\begin{equation}\label{e-gue171020}
\begin{split}
&\varphi(z)=\sum\limits_{j=1}^{n-1}\lambda_j|z_j|^2+O(|z|^3), \ \ \forall (z, \theta)\in D,\\
&\langle\,\frac{\pr}{\pr z_j}\,|\,\frac{\pr}{\pr z_k}\,\rangle=\delta_{j,k}+O(\abs{z}),\ \ j, k=1,\ldots,n-1,\ \ \forall (z,\theta)\in D,
\end{split}
\end{equation}
where $\{\lambda_j\}_{j=1}^{n-1}$ are the eigenvalues of $\mathcal{L}_{x_0}$ with respect to the given $S^1$-invariant Hermitian metric $\langle\,\cdot\,|\,\cdot\,\rangle$ on $X$. Then the volume form with respect to the $S^1$-invariant Hermitian metric $\langle\,\cdot\,|\,\cdot\,\rangle$ on $X$ is $dv_X=\lambda(z)dv(z)d\theta$ where $\lambda(z)$ is a positive smooth function on $D$ and $dv(z)=2^{n-1}dx_1\wedge\cdots\wedge dx_{2n-2}$. Let $\{e^j(z)\}_{j=1}^{n-1}$ be an orthonormal frame of $T^{*0,1}X$ over $D$ such that
\begin{equation}\label{e-gue171021a}
e^j(z)=d\ol z_j+O(|z|),\ \ j=1,\ldots,n-1,
\end{equation}
and let $\{\ol L_j\}_{j=1}^{n-1}\subset T^{0,1}X$ be the dual frame of $\{e^j(z)\}_{j=1}^{n-1}$. Thus
\[L_j(z)=\frac{\partial}{\partial z_j}+O(|z|),\ \ j=1,\ldots,n-1.\]
We will always identify $D$ with an open subset of $\mathbb C^{n-1}\times\mathbb R$. Put $\tilde D=\{z\in\mathbb C^{n-1};\, |z|<\delta\}.$ Then $\varphi(z)$ can be treated as a real smooth function on $\tilde D$. Let $\Omega^{0, q}(\tilde D)$ be the space of smooth $(0,q)$ forms on $\tilde D$ and let $\Omega_0^{0, q}(\tilde D)$ be the subspace of $\Omega^{0, q}(\tilde D)$ whose elements have compact support in $\tilde D$.

For $r>0$, let
$\tilde D_r=\{z\in\mathbb C^{n-1};\, |z|<r\}$. Here $|z|<r$ means that $|z_j|<r, \forall j=1, \cdots, n-1$. For $m\in\mathbb N$, let $F_m$ be the scaling map: $F_m(z)=(\frac{z_1}{\sqrt m},\ldots, \frac{z_{n-1}}{\sqrt m}), z\in \tilde D_{\log m}$. From now on, we
assume $m$ is sufficiently large such that $F_m(\tilde D_{\log m})\Subset \tilde D$. We define the scaled bundle $F_m^*T^{*0, q} \tilde D$ on $\tilde D_{\log m}$ to be the bundle whose fiber at
$z\in \tilde D_{\log m}$ is
\begin{equation}\label{e-gue171020q}
F_m^*T^{*0,q}\tilde D|_{z}=\left\{\sum\nolimits_{|J|=q}^\prime a_Je^J\left(\frac{z}{\sqrt m}\right);\, a_J\in\mathbb C, |J|=q, J~\text{strictly increasing}\right\},
\end{equation}
where for multiindex $J=(j_1,\ldots,j_q)$, $e^J:=e^{j_1}\wedge e^{j_2}\wedge\cdots\wedge e^{j_q}$.
We take a Hermitian metric $\langle\,\cdot\,|\,\cdot\,\rangle_{F_m^*}$ on $F_m^*T^{*0,q}\tilde D$ so that at each point $z\in \tilde D_{\log m}$,
\begin{equation}\label{e-gue171020qI}
\left\{e^J\left(\frac{z}{\sqrt m}\right);\, |J|=q, J~\text{strictly increasing}\right\}
\end{equation}
is an orthonormal frame for $F_m^*T^{*0,q}\tilde D$ on $\tilde D_{\log m}$.
Let $F_m^*\Omega^{0,q}(\tilde D_r)$ denote the space of smooth sections of $F_m^*T^{*0,q} \tilde D$ over $\tilde D_r$ and let $F_m^*\Omega^{0,q}_0(\tilde D_r)$ be the subspace of $F_m^*\Omega^{0,q}(\tilde D_r)$ whose elements have compact support in $\tilde D_r$. Here $r<\log m$. Given $f\in\Omega^{0,q}(\tilde D)$.
We write $f=\sideset{}{'}\sum\nolimits_{|J|=q}f_Je^J$, where $\sideset{}{'}\sum$ means that
the summation is performed only over strictly increasing multiindices. We define the scaled form $F_m^*f\in F_m^*\Omega^{0,q}(\tilde D_{\log m})$ by
\begin{equation}\label{j8}
F_m^*f=\sideset{}{'}\sum\nolimits_{|J|=q}f_J\left(\frac{z}{\sqrt m}\right)e^J\left(\frac{z}{\sqrt m}\right),\ \  z\in\tilde D_{\log m}.
\end{equation}
For brevity, we denote $F_m^*f$ by $f(\frac{z}{\sqrt m})$.

Let $\overline\partial: \Omega^{0, q}(\tilde D)\rightarrow\Omega^{0, q+1}(\tilde D)$ be the Cauchy-Riemann operator. Then there exists a scaled differential operator $\overline\partial_{(m)}: F_m^*\Omega^{0,q}(\tilde D_{\log m})\rightarrow F_m^*\Omega^{0,q+1}(\tilde D_{\log m})$ such that
\begin{equation}\label{l1}
\overline\partial_{(m)}F_m^* f=\frac{1}{\sqrt{m}}F_m^*(\overline\partial f),\ \ \forall f\in\Omega^{0,q}(F_m(\tilde D_{\log m})).
\end{equation}
Let $(\,\cdot\,|\,\cdot\,)_{2mF_m^*\varphi}$ be the weighted inner product on the space $F_m^*\Omega^{0,q}_0(\tilde D_{\log m})$ defined as follows
\begin{equation}\label{e-gue171020k}
(\,f\,|\,g\,)_{2mF_m^*\varphi}=\int_{\tilde D_{\log m}}\langle\,f\,|\,g\,\rangle_{F_m^*}e^{-2mF_m^*\varphi}\lambda(\frac{z}{\sqrt m})dv(z).
\end{equation}
Let $\overline\partial^\star_{(m)}: F_m^*\Omega^{0,q+1}(\tilde D_{\log m})\rightarrow F_m^*\Omega^{0,q}(\tilde D_{\log m})$
be the formal adjoint of $\overline\partial_{(m)}$ with respect to $(\,\cdot\,|\,\cdot\,)_{2mF_m^*\varphi}$.
We now define the scaled complex Laplacian $\Box^{(q)}_{(m)}:F_m^*\Omega^{0,q}(\tilde D_{\log m})\rightarrow F_m^*\Omega^{0,q}(\tilde D_{\log m})$ which is given by
$
\Box^{(q)}_{(m)}=\overline\partial_{(m)}^\star\overline\partial_{(m)}
+\overline\partial_{(m)}\overline\partial_{(m)}^\star.
$
We have the G\r{a}rding's inequality as follows.

\begin{proposition}\label{k2}
For every $r>0$ with $\tilde D_{2r}\subset \tilde D_{\log m}$ and $s\in\mathbb N_0$, there is a constant $C_{r, s}>0$ independent of $m$ and the point $x_0$ such that
\begin{equation}\label{e-gue171020z}
\|u\|^2_{2mF_m^*\varphi, s+2, \tilde D_{r}}\leq C_{s, r}\left(\|u\|^2_{2mF_m^*\varphi, 0,\tilde D_{2r}}+\|\Box^{(q)}_{(m)}u\|^2_{2mF_m^*\varphi, s, \tilde D_{2r}}\right)
\end{equation}
for all $ u\in F_m^*\Omega^{0, q}(\tilde D_{\log m}),$ where $\|u\|_{2mF_m^*\varphi, s, \tilde D_{r}}$ is the weighted Sobolev norm of order $s$ with respect to the weight function $2mF_m^*\varphi$ which is given by
\begin{equation}\label{e-gue171020zI}
\|u\|^2_{2mF_m^*\varphi, s, \tilde D_r}=\sum^\prime\nolimits_{\alpha\in\mathbb N_0^{2n-2}, |\alpha|\leq s, |J|=q}\int_{\tilde D_r}|\partial^{\alpha}_xu_J|^2e^{-2mF_m^*\varphi}\lambda(\frac{z}{\sqrt{m}})dv(z),
\end{equation}
where $u=\sum_{|J|=q}^\prime u_Je^J(\frac{z}{\sqrt m})\in F_m^*\Omega^{0, q}(\tilde D_{\log m})$.
\end{proposition}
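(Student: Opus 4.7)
The proof is a G\r{a}rding inequality for the second-order elliptic operator $\Box^{(q)}_{(m)}$; the only substantive content beyond the classical argument is the uniformity of $C_{s,r}$ in both $m$ and the base point $x_0$. The plan is to verify (i) the coefficients of $\Box^{(q)}_{(m)}$ are bounded in $C^\infty(\tilde D_{2r})$ uniformly in $m$ and $x_0$, (ii) $\Box^{(q)}_{(m)}$ is uniformly elliptic on $\tilde D_{2r}$, and (iii) the weight $e^{-2mF_m^*\varphi}\lambda(z/\sqrt m)$ is bounded above and below by positive constants on $\tilde D_{2r}$ uniformly in $m,x_0$; the desired estimate then follows from the standard interior elliptic estimate iterated over a nested chain of domains between $\tilde D_r$ and $\tilde D_{2r}$.

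For (iii), note that $2mF_m^*\varphi(z)=2\sum_{j=1}^{n-1}\lambda_j(x_0)|z_j|^2+O(|z|^3/\sqrt m)$ by (\ref{e-gue171020}); on $\tilde D_{2r}$ both this quantity and $\lambda(z/\sqrt m)$ are $C^\infty$-bounded, yielding uniform equivalence of $\|\cdot\|_{2mF_m^*\varphi,s,\tilde D_r}$ with the standard Sobolev norm of order $s$. For (i), the chain rule together with (\ref{l1}) and (\ref{e-gue171021a}) yields
\[
\bar L_j^{(m)}=\frac{\partial}{\partial\bar z_j}+\frac{1}{\sqrt m}A_j^{(m)},\qquad L_j^{(m)}=\frac{\partial}{\partial z_j}+\frac{1}{\sqrt m}B_j^{(m)},
\]
where $A_j^{(m)}, B_j^{(m)}$ are first-order operators whose coefficients arise from composing smooth data on $X$ with $F_m$; all derivatives are bounded in $C^\infty(\tilde D_{2r})$ uniformly in $m$ and $x_0$, the latter uniformity coming from compactness of $X$. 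The adjoint $\bar\partial_{(m)}^{\,\star}$ picks up additional coefficients involving $\nabla(2mF_m^*\varphi)=4\lambda_j(x_0)z_j+O(|z|^2/\sqrt m)$ and $\nabla\log\lambda(z/\sqrt m)$, both $C^\infty$-bounded uniformly in $m,x_0$. Assembling these gives a uniform $C^\infty$-bound on all coefficients of $\Box^{(q)}_{(m)}$.

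For (ii), the principal symbol of $\Box^{(q)}_{(m)}$ equals $\tfrac{1}{2}|\xi|^2$ times the identity on $F_m^*T^{*0,q}\tilde D$ plus a perturbation of size $O(m^{-1/2})$ on $\tilde D_{2r}$, so for $m$ larger than some $m_1=m_1(r)$ it is bounded below by $c|\xi|^2$ with $c>0$ independent of $m$ and $x_0$; the finitely many small values $m\leq m_1$ can be absorbed into $C_{s,r}$.

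With (i)--(iii) in place, the argument is standard. For $s=0$, choose $\chi\in C^\infty_0(\tilde D_{2r})$ with $\chi\equiv 1$ on $\tilde D_r$; applying the classical interior elliptic estimate to $\chi u$ and using $\Box^{(q)}_{(m)}(\chi u)=\chi\,\Box^{(q)}_{(m)}u+[\Box^{(q)}_{(m)},\chi]u$ with $[\Box^{(q)}_{(m)},\chi]$ a first-order operator of uniformly bounded coefficients supported in $\tilde D_{2r}\setminus\tilde D_r$, one obtains
\[
\|u\|^2_{2mF_m^*\varphi,2,\tilde D_r}\leq C\Bigr(\|u\|^2_{2mF_m^*\varphi,1,\tilde D_{2r}}+\|\Box^{(q)}_{(m)}u\|^2_{2mF_m^*\varphi,0,\tilde D_{2r}}\Bigr),
\]
and iterating over a nested family $\tilde D_r\subset\tilde D_{r_1}\subset\cdots\subset\tilde D_{2r}$ absorbs the $H^1$ term, giving (\ref{e-gue171020z}) for $s=0$. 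The case of general $s$ follows by induction: differentiating $\Box^{(q)}_{(m)}u=v$ yields $\Box^{(q)}_{(m)}(\partial^\alpha u)=\partial^\alpha v+[\Box^{(q)}_{(m)},\partial^\alpha]u$ with a commutator of uniformly bounded coefficients, reducing to the base case on a slightly larger cutoff. The main obstacle throughout is precisely the bookkeeping needed to force every implicit constant to be uniform in $m$ and $x_0$; this is made tractable by the explicit form of $\Box^{(q)}_{(m)}$ above together with compactness of $X$.
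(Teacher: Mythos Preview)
The paper does not give a proof of this proposition; it is stated as part of the scaling technique recalled from \cite[Section~1.4]{HL15}, and the text moves directly to Theorem~\ref{t-gue171020w}. Your proposal supplies exactly the argument that underlies the cited reference: once one checks that on the fixed ball $\tilde D_{2r}$ the weight $e^{-2mF_m^*\varphi}\lambda(z/\sqrt m)$ is uniformly comparable to $1$, that the coefficients of $\Box^{(q)}_{(m)}$ are $C^\infty$-bounded uniformly in $m$ and $x_0$, and that the principal symbol is uniformly elliptic, the estimate is the classical interior G\r{a}rding inequality iterated over nested cutoffs. Your handling of each of these points is correct; in particular the uniformity in $x_0$ via compactness of $X$ (bounding the eigenvalues $\lambda_j(x_0)$, the density $\lambda$, and the Taylor remainders in \eqref{e-gue171020} and \eqref{e-gue171021a}) is the essential observation, and you identify it. One small remark: your treatment of the finitely many small $m$ is legitimate because the hypothesis $\tilde D_{2r}\subset\tilde D_{\log m}$ already forces $m>e^{2r}$, so only finitely many integers remain below your $m_1(r)$, and for each of them compactness of $X$ still gives a uniform (in $x_0$) elliptic estimate.
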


For $p\in X$, let $\det\mathcal{L}_p=\mu_1\cdots\mu_{n-1}$, where $\mu_j$, $j=1,\ldots,n-1$, are the eigenvalues of $\mathcal{L}_p$ with respect to $\langle\,\cdot\,|\,\cdot\,\rangle$.
We recall the following (see the proof of Theorem 2.1 in~\cite{HL15})

\begin{lemma}\label{t-gue171020w}
Let
\[h_m=\sideset{}{'}\sum_{\abs{J}=q}h_{m,J}(z)e^J(\frac{z}{\sqrt{m}})\in F^*_m\Omega^{0,q}(\tilde D_{\log m})\]
with $\norm{h_m}_{2mF_m^*\varphi, 0,\tilde D_{\log m}}\leq 1$, $m=1,2,3,\ldots$. Assume that
\[\lim_{m\To+\infty}\norm{\Box^{(q)}_{(m)}h_m}_{2mF_m^*\varphi, 0,\tilde D_{\log m}}=0\] and for every $k\in\mathbb N$, there is a constant $C_k>0$ independent of $m$ such that
\[\norm{(\Box^{(q)}_{(m)})^kh_m}_{2mF_m^*\varphi, 0,\tilde D_{\log m}}\leq C_k,\ \ \forall m=1,2,3,\ldots.\]
Then, for every strictly increasing multiindex $J$, $\abs{J}=q$, we have
\[\limsup_{m\To+\infty}\abs{h_{m,J}(0)}^2\leq\frac{1}{\pi^{n-1}}\mathrm 1_{X(q)}(x_0)|\det \mathcal{L}_{x_0}|\delta_{-}(J),\]
where $\delta_-(J)=1$ if $\lambda_j<0$, for every $j\in J$ and $\delta_-(J)=0$ otherwise. Here $\lambda_j$, $j=1,\ldots,n-1$, are the eigenvalues of  $\mathcal{L}_{x_0}$ with respect to
$\langle\,\cdot\,|\,\cdot\,\rangle$. Recall that $X(q)$ is given by \eqref{e-gue170930I}.
\end{lemma}


\section{Holomorphic Morse inequalities on complex manifolds with boundary}\label{s-gue171021}
In this section, we will prove Theorem~\ref{t-gue170930}.
\subsection{Proof of weak Morse inequalities}
In view of Theorem~\ref{t-gue171014pm}, we know that there is a $\hat m_0>0$ such that for all $m\geq\hat m_0$,
${\rm Ker\,}\hat\Box^{(q)}_{\beta,m}\cong H^q(\ol M)$. From now on, we assume that $m\geq\hat m_0$.

Let $\{f_{1},...,f_{d_{m}}\}$ be an orthonormal basis of ${\rm Ker\,}\hat\Box^{(q)}_{\beta,m}$.
Set $\hat \Pi_{\beta,m}^{(q)}(x)=\sum_{j=1}^{d_{m}}|f_{j}(x)|^{2}$.
Then
\begin{equation}\label{e-gue171022ka}
{\rm dim\,}H^q(\ol M)={\rm dim\,}{\rm Ker\,}\hat\Box^{(q)}_{\beta,m}=\int_{X}\hat \Pi_{\beta,m}^{(q)}(x)dv_{X}.
\end{equation}
Now, fix $x_0\in X$ and let $(D,x=(z,\theta),\varphi)$ be a BRT trivialization such that $x(x_0)=0$, where
\begin{equation}\label{e-gue171021t}
D=\{(z, \theta)\in\Complex^{n-1}\times\Real;\,
\abs{z}<\delta, -\varepsilon_0<\theta<\varepsilon_0\},\ \ \delta>0,\ \ 0<\varepsilon_0\leq\pi.
\end{equation}
We take $\varphi$ and $z$ so that \eqref{e-gue171020} hold.
 Let $\{e^j(z)\}_{j=1}^{n-1}$ be an orthonormal frame of $T^{*0,1}X$ over $D$ such that \eqref{e-gue171021a} hold. We will use the same notations as in Section~\ref{s-gue171019}.
For any $\alpha\in{\rm Ker\,}\hat\Box^{(q)}_{\beta,m}$, put $\alpha=\sideset{}{'}\sum_{\abs{J}=q}\alpha_Je^J$. For any strictly multiindex $J$, $\abs{J}=q$, put
\begin{equation}\label{e-gue171021b}
S_{m,J}^{(q)}(x)=\sup_{\alpha\in{\rm ker\,}\hat\Box^{(q)}_{\beta,m},\|\alpha\|_X=1}|\alpha_{J}|^{2}.
\end{equation}
The following is well-known (see Lemma 2.1 in~\cite{HM12} )

\begin{lemma}\label{l-gue171021}
We have
$\hat\Pi_{\beta,m}^{(q)}(x_0)=\sum_{|J|=q}^{'}S_{m,J}^{(q)}(x_0)$.
\end{lemma}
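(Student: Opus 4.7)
The plan is to expand the Bergman-type kernel $\hat\Pi^{(q)}_{\beta,m}$ in the orthonormal frame $\{e^J\}$ and then identify each diagonal component with the extremal quantity $S^{(q)}_{m,J}$ via a Cauchy--Schwarz/extremal argument.

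First I would fix the orthonormal basis $\{f_1,\ldots,f_{d_m}\}$ of $\mathrm{Ker}\,\hat\Box^{(q)}_{\beta,m}$ and write each $f_j=\sideset{}{'}\sum_{|J|=q}f_{j,J}e^J$ on $D$. Since $\{e^J(x_0)\}$ is an orthonormal frame of $T^{*0,q}_{x_0}X$, the pointwise norm splits as $|f_j(x_0)|^2=\sideset{}{'}\sum_{|J|=q}|f_{j,J}(x_0)|^2$, so
\begin{equation*}
\hat\Pi^{(q)}_{\beta,m}(x_0)=\sum_{j=1}^{d_m}|f_j(x_0)|^2=\sideset{}{'}\sum_{|J|=q}\Bigl(\sum_{j=1}^{d_m}|f_{j,J}(x_0)|^2\Bigr).
\end{equation*}
It therefore suffices to prove, for each strictly increasing multiindex $J$ with $|J|=q$, the identity $S^{(q)}_{m,J}(x_0)=\sum_{j=1}^{d_m}|f_{j,J}(x_0)|^2$.

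For the inequality $S^{(q)}_{m,J}(x_0)\le \sum_j|f_{j,J}(x_0)|^2$: any $\alpha\in\mathrm{Ker}\,\hat\Box^{(q)}_{\beta,m}$ with $\|\alpha\|_X=1$ can be expanded as $\alpha=\sum_{j=1}^{d_m}c_jf_j$ with $\sum_j|c_j|^2=1$, so $\alpha_J(x_0)=\sum_j c_jf_{j,J}(x_0)$ and the Cauchy--Schwarz inequality gives $|\alpha_J(x_0)|^2\le\sum_j|f_{j,J}(x_0)|^2$. For the reverse inequality, set $A:=\sum_j|f_{j,J}(x_0)|^2$. If $A=0$ there is nothing to prove; otherwise choose $c_j:=\overline{f_{j,J}(x_0)}/\sqrt{A}$ so that $\sum_j|c_j|^2=1$ and the corresponding $\alpha=\sum_j c_jf_j$ satisfies $\alpha_J(x_0)=\sqrt{A}$, hence $|\alpha_J(x_0)|^2=A$. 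This shows $S^{(q)}_{m,J}(x_0)\ge A$, and combining with the opposite bound gives the claimed identity. Summing over $J$ yields the lemma.

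There is essentially no obstacle here; the statement is a standard extremal characterization of a reproducing-kernel diagonal, analogous to well-known formulas for Bergman kernels of holomorphic $L^2$ spaces. The only mild subtlety is the trivial case $A=0$, which must be handled separately as above, and the need to observe that the expansion coefficients $c_j$ produce an element of $\mathrm{Ker}\,\hat\Box^{(q)}_{\beta,m}$, which is immediate since the kernel is a linear subspace.
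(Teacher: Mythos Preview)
Your proof is correct and is precisely the standard extremal/Cauchy--Schwarz argument for the diagonal of a reproducing kernel; the paper does not write out a proof but refers to Lemma~2.1 in \cite{HM12}, where exactly this argument appears.
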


We can now prove

\begin{theorem}\label{Morsea}
There exists a constant $C>0$ independent of $x_0$ and $m$ such that
\begin{equation}\label{e-gue171021bI}
m^{-(n-1)}\hat\Pi_{\beta,m}^{(q)}(x_0)\leq C.
\end{equation}
If $x_0\in X_{{\rm reg\,}}$, we have
\begin{equation}\label{e-gue171021bII}
\limsup_{m\rightarrow\infty}m^{-(n-1)}\hat\Pi_{\beta,m}^{(q)}(x_0)\leq\frac{1}{2\pi^{n}}|\det\mathcal{L}_{x_0}|\cdotp
\mathrm{1}_{X(q)}(x_0),
\end{equation}
where $X_{{\rm reg\,}}$ is given by \eqref{e-gue171021y}.
\end{theorem}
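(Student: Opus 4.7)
The plan is to adapt the localization-and-scaling scheme of~\cite{HL15} to the pseudodifferential Laplacian $\hat\Box^{(q)}_\beta$, the transfer of a priori estimates between $\Box^{(q)}_b$ and $\hat\Box^{(q)}_\beta$ being provided by Theorem~\ref{t-gue171019I}. By Lemma~\ref{l-gue171021} it is enough to prove both statements for each component $S^{(q)}_{m,J}(x_0)$, $\abs{J}=q$ strictly increasing; on $X(q)$ the unique such $J$ with $\delta_-(J)=1$ (the one indexing the negative eigenvalues of $\mathcal L_{x_0}$) already produces the factor $\abs{\det\mathcal L_{x_0}}$ in \eqref{e-gue171021bII}.

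Fix $\alpha\in{\rm Ker\,}\hat\Box^{(q)}_{\beta,m}$ with $\norm{\alpha}_X=1$ and a BRT trivialization $(D,x=(z,\theta),\varphi)$ around $x_0$ with $x(x_0)=0$ satisfying \eqref{e-gue171020}--\eqref{e-gue171021a}. Since $T=\pr/\pr\theta$ by \eqref{e-can}, $T\alpha=im\alpha$ gives $\alpha|_D=e^{im\theta}\tilde\alpha(z)$ with $\tilde\alpha=\sideset{}{'}\sum_{\abs{J}=q}\tilde\alpha_J(z)e^J(z)$. Setting $\beta:=e^{m\varphi}\tilde\alpha$, a direct computation based on \eqref{e-can} shows $\ddbar_b\alpha=e^{im\theta}e^{-m\varphi}\ddbar\beta$; hence in the BRT chart $\ddbar_b$ acting on $\alpha$ corresponds to ordinary $\ddbar$ on $\beta$ paired with the weighted inner product of weight $e^{-2m\varphi}\lambda(z)\,dv(z)$ on $\tilde D$, and $\norm{\alpha|_D}_X^2$ equals the $\theta$-length $2\varepsilon_0$ times the weighted norm of $\beta$ on $\tilde D$. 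Setting $u_m:=\sqrt{2\varepsilon_0}\,m^{-(n-1)/2}F_m^*\beta$, the change of variables $w=z/\sqrt m$ and the cancellation $\abs{\beta}^2 e^{-2m\varphi}=\abs{\tilde\alpha}^2$ give $\norm{u_m}_{2mF_m^*\varphi,0,\tilde D_{\log m}}\leq 1$ and $\abs{u_{m,J}(0)}^2=2\varepsilon_0\,m^{-(n-1)}\abs{\alpha_J(x_0)}^2$.

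By \eqref{l1} and the correspondence above, $(\Box^{(q)}_{(m)})^k u_m$ is controlled in the scaled weighted norm by the quantities $\norm{(\tfrac{1}{m}\Box^{(q)}_b)^j\alpha}_X$ for $j\leq k$. Theorem~\ref{t-gue171019I} applied with $u=\alpha$, using $\hat\Box^{(q)}_\beta\alpha=0$, gives $\norm{(\tfrac{1}{m}\Box^{(q)}_b)^k\alpha}_X\leq\Td C_k\,m^{-1/4}$ for every $k\in\mathbb N$, providing at once the decay $\norm{\Box^{(q)}_{(m)}u_m}_{2mF_m^*\varphi,0,\tilde D_{\log m}}\to 0$ and the uniform bounds $\norm{(\Box^{(q)}_{(m)})^k u_m}_{2mF_m^*\varphi,0,\tilde D_{\log m}}\leq C_k$. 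Theorem~\ref{t-gue171020w} then yields $\limsup_m\abs{u_{m,J}(0)}^2\leq\pi^{-(n-1)}\mathrm{1}_{X(q)}(x_0)\abs{\det\mathcal L_{x_0}}\delta_-(J)$; dividing by $2\varepsilon_0$ and passing to a near-maximizer $\alpha_m$ of $S^{(q)}_{m,J}(x_0)$ gives the corresponding $\limsup$ for $m^{-(n-1)}S^{(q)}_{m,J}(x_0)$. For \eqref{e-gue171021bII} we use Remark~\ref{r-gue171021}: on $X_{{\rm reg\,}}$ we take $\varepsilon_0=\pi$, producing the sharp constant $\pi^{-(n-1)}/(2\pi)=(2\pi^n)^{-1}$; for \eqref{e-gue171021bI}, compactness of $X$ and a uniform choice of BRT charts yield a uniform lower bound on $\varepsilon_0$ and hence a uniform constant $C$.

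The main obstacle is establishing the uniform norm dominance $\norm{(\Box^{(q)}_{(m)})^k u_m}_{2mF_m^*\varphi,0,\tilde D_{\log m}}\leq C_k'\sum_{j=0}^{k}\norm{(\tfrac{1}{m}\Box^{(q)}_b)^j\alpha}_X$ with constants independent of both $m$ and $\alpha$. This requires the careful transfer of $\Box^{(q)}_b$ through the conjugation $\alpha\mapsto\beta=e^{m\varphi-im\theta}\alpha$, which produces commutators with both the fast-oscillating factor $e^{im\theta}$ and the Gaussian factor $e^{m\varphi}$, and then through the scaling $F_m^*$ with its accompanying factor $1/m$ on $\ddbar$; the essential point is that the $m^{-1/4}$ decay supplied by Theorem~\ref{t-gue171019I} is strong enough to absorb all the resulting error terms without losing the uniform-in-$m$ control that Theorem~\ref{t-gue171020w} requires.
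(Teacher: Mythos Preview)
Your approach is essentially the same as the paper's: localize in a BRT chart, write $\alpha=e^{im\theta}\tilde\alpha(z)$, conjugate by $e^{m\varphi}$, rescale by $F_m$, transfer the a~priori control from $\hat\Box^{(q)}_\beta$ to $\Box^{(q)}_b$ via Theorem~\ref{t-gue171019I}, and then invoke Theorem~\ref{t-gue171020w} with $\varepsilon_0=\pi$ on $X_{\rm reg}$ for the sharp constant.

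One small point on \eqref{e-gue171021bI}: Theorem~\ref{t-gue171020w} is a $\limsup$ statement, so it does not by itself yield a bound valid for \emph{every} $m$; your appeal to compactness and a uniform lower bound on $\varepsilon_0$ addresses uniformity in $x_0$ but not in $m$. The paper closes this by feeding the uniform higher-order bounds $\norm{(\Box^{(q)}_{(m)})^k u_m}\leq C_k$ (which you already have) directly into the iterated G\r{a}rding inequality of Proposition~\ref{k2} and Sobolev embedding, giving $|u_{m,J}(0)|\leq\hat C$ for all $m$ without passing to a limit. This is a one-line fix with the ingredients you have assembled.
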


\begin{proof}
Let $u\in{\rm Ker\,}\hat\Box^{(q)}_{\beta,m}$ with $\norm{u}_X=1$. On $D$,
write $u(z,\theta)=\hat u(z)e^{im\theta}$, $\hat u(z)\in\Omega^{0,q}(\tilde D)$, where $\tilde D=\set{z\in\Complex^{n-1};\, \abs{z}<\delta}$. Put
\begin{equation}\label{e-gue171021p}
\begin{split}
&v(z)=\hat u(z)e^{m\varphi},\\
&v^{(m)}=m^{-\frac{n-1}{2}}v(\frac{z}{\sqrt{m}}).
\end{split}
\end{equation}
It is easy to see that
\begin{equation}\label{e-gue171021pa}
\norm{v^{(m)}}^2_{2mF^*_m\varphi, 0,\tilde D_{\log m}}\leq\frac{1}{2\delta},
\end{equation}
where $\delta>0$ is as in \eqref{e-gue171021t}.
For every $k\in\mathbb N$, put
\begin{equation}\label{e-gue171021pI}
g_{k,m}(z):=e^{-im\theta}e^{m\varphi(z)}(\frac{1}{m}\Box_{b,m}^{(q)})^ku\in\Omega^{0,q}(\tilde D).
\end{equation}
From Lemma 2.11 in~\cite{HL15}, it is not difficult to see that for every $k\in\mathbb N$, we have
\begin{equation}\label{e-gue171021pII}
(\Box^{(q)}_{(m)})^kv^{(m)}=m^{-\frac{n-1}{2}}g_{k,m}(\frac{z}{\sqrt{m}}).
\end{equation}
From \eqref{e-gue171021pII}, it is straightforward to see that for every $k\in\mathbb N$, we have
\begin{equation}\label{e-gue171021pIII}
\norm{(\Box^{(q)}_{(m)})^kv^{(m)}}^2_{2mF^*_m\varphi, 0,\tilde D_{\log m}}\leq\frac{1}{2\delta}\norm{(\frac{1}{m}\Box_{b,m}^{(q)})^ku}^2_X.
\end{equation}
From \eqref{e-gue171021pIII}, \eqref{e-gue171019y} and note that $\hat\Box^{(q)}_\beta u=0$, we deduce that there is a constant $C_k>0$ independent of $m$, $u$ and the point $x_0$ such that
\begin{equation}\label{e-gue171021g}
\norm{(\Box^{(q)}_{(m)})^kv^{(m)}}_{2mF^*_m\varphi, 0,\tilde D_{\log m}}\leq C_k.
\end{equation}
Fix $r, r'<\log m$. Then by Proposition~\ref{k2}, \eqref{e-gue171021pa} and \eqref{e-gue171021g}, we have

\begin{equation}\label{e-gue171021gI}
\begin{split}
\|v^{(m)}\|_{2mF_{m}^{\ast}\varphi,s+2,D_{r}}&\leq C_{r,s}
\bigl(\|\Box_{(m)}^{(q)}v^{(m)}\|_{2mF_{m}^{\ast}\varphi,s,D_{r'}}+
\|v^{(m)}\|_{2mF_{m}^{\ast}\varphi,0,D_{r'}} \bigr)\\
&\leq C_{r,s}\sum_{j=0}^{s}\|(\Box_{(m)}^{(q)})^{j}v^{(m)}\|_{2mF_{m}^{\ast}\varphi,0,D_{r'}}\\
&\leq \tilde C_{r,s},
\end{split}
\end{equation}
where $C_{r,s}>0$ and $\tilde C_{r,s}>0$ are constants independent of $m$, $u$ and the point $x_0$.
From \eqref{e-gue171021gI} and by Sobolev embedding theorem, we have
\begin{equation}\label{e-gue171021gII}
m^{-(n-1)}|u(0)|^{2}=\abs{v^{(m)}(0)}^2\leq \|v^{(m)}\|_{2mF_{m}^{\ast}\varphi,n+2,D_{r}}\leq\hat C
\end{equation}
where $\hat C>0$ is a constant independent of $m$, $u$ and the point $x_0$. From \eqref{e-gue171021gII} and Lemma~\ref{l-gue171021},
we get the conclusion of the first part of the theorem.

Fix $J_0$ with $|J_0|=q$, $J_0=\{j_{1},...,j_{q}\}$, $j_{1}<\cdots<j_{q}$. By definition, there is a sequence
\[u_{m_\ell}=\sideset{}{'}\sum_{\abs{J}=q}u_{m_\ell,J}e^J\in{\rm Ker\,}\hat\Box^{(q)}_{\beta,m}\]
with $\norm{u_{m_\ell}}_X=1$, $\hat m_0\leq m_1<m_2<\cdots$, such that
\begin{equation}\label{e-gue171021s}
\limsup_{m\rightarrow\infty}m^{-(n-1)}S^{(q)}_{m,J_0}(x_{0})=
\lim_{\ell\rightarrow\infty}m_{\ell}^{-(n-1)}|u_{m_{\ell},J_0}(x_{0})|^{2}.
\end{equation}
On $D$,
write $u_{m_\ell}(z,\theta)=\hat u_{m_\ell}(z)e^{im_\ell\theta}=\sideset{}{'}\sum_{\abs{J}=q}e^{im_\ell\theta}\hat u_{m_\ell,J}e^J$, $\hat u_{m_\ell}(z)\in\Omega^{0,q}(\tilde D)$.
Put
\begin{equation}\label{e-gue171021ab}
\begin{split}
v_{m_\ell}(z)&=\hat u_{m_\ell}(z)e^{m_\ell\varphi}=\sideset{}{'}\sum_{\abs{J}=q}v_{m_\ell,J}e^J,\\
v^{(m_\ell)}&=m_\ell^{-\frac{n-1}{2}}v_{m_\ell}(\frac{z}{\sqrt{m_\ell}})=\sideset{}{'}\sum_{\abs{J}=q}m_\ell^{-\frac{n-1}{2}}v_{m_\ell,J}(\frac{z}{\sqrt{m_\ell}})e^J(\frac{z}{\sqrt{m_\ell}})\\
&=\sideset{}{'}\sum_{\abs{J}=q}v^{(m_\ell)}_J(z)e^J(\frac{z}{\sqrt{m_\ell}}).
\end{split}
\end{equation}
Assume that $x_0\in X_{{\rm reg\,}}$. In view of Remark~\ref{r-gue171021}, we can take $\delta>0$ in \eqref{e-gue171021t} to be $\pi$ and we have
\begin{equation}\label{e-gue171021abI}
\norm{v^{(m_\ell)}}^2_{2m_\ell F^*_{m_\ell}\varphi, 0,\tilde D_{\log m_\ell}}\leq\frac{1}{2\pi}. \
\end{equation}
As \eqref{e-gue171021pIII}, for every $k\in\mathbb N$, we have
\begin{equation}\label{e-gue171021abII}
\norm{(\Box^{(q)}_{(m_\ell)})^kv^{(m_\ell)}}_{2m_\ell F^*_{m_\ell}\varphi, 0,\tilde D_{\log m_\ell}}\leq\norm{(\frac{1}{m_\ell}\Box_{b,m_\ell}^{(q)})^ku_{m_\ell}}_X,\ \ \forall m_\ell.
\end{equation}
From \eqref{e-gue171021abII}, \eqref{e-gue171019y} and note that $\hat\Box^{(q)}_\beta u_{m_\ell}=0$, we have
\begin{equation}\label{e-gue171021d}
\lim_{\ell\To+\infty}\norm{\Box^{(q)}_{(m_\ell)}v^{(m_\ell)}}_{2m_\ell F_{m_\ell}^*\varphi, 0,\tilde D_{\log m_\ell}}=0,
\end{equation}
and for every $k\in\mathbb N$, there is a constant $C_k>0$ independent of $m_\ell$ such that
\begin{equation}\label{e-gue171021dI}
\norm{\bigl(\Box^{(q)}_{(m_\ell)}\bigr)^k v^{(m_\ell)}}_{2m_\ell F_{m_\ell}^*\varphi, 0,\tilde D_{\log m_\ell}}\leq C_k,\ \ \forall m_\ell.
\end{equation}
From \eqref{e-gue171021abI}, \eqref{e-gue171021d}, \eqref{e-gue171021dI} and Lemma~\ref{t-gue171020w}, we deduce that
\begin{equation}\label{e-gue171022}
\limsup_{m\rightarrow\infty}m^{-(n-1)}S^{(q)}_{m,J_0}(x_{0})=
\lim_{\ell\To+\infty}\abs{v^{(m_\ell)}_{J_0}(x_0)}^2\leq \frac{1}{2\pi^n}\mathrm 1_{X(q)}(x_0)|\det \mathcal{L}_{x_0}|\delta_{-}(J).
\end{equation}
From \eqref{e-gue171022} and Lemma~\ref{l-gue171021}, we
deduce that
\begin{equation}\label{e-gue171022k}
\begin{split}
&\limsup_{m\rightarrow\infty}m^{-(n-1)}\hat\Pi_{\beta,m}^{(q)}(x_0)\\
&\leq\sideset{}{'}\sum_{\abs{J}=q}\limsup_{m\rightarrow\infty}m^{-(n-1)}S^{(q)}_{m,J}(x_{0})\\
&\leq\sideset{}{'}\sum_{\abs{J}=q}\frac{1}{2\pi^n}\mathrm 1_{X(q)}(x_0)|\det \mathcal{L}_{x_0}|\delta_{-}(J)\\
&\leq \frac{1}{2\pi^n}\mathrm 1_{X(q)}(x_0)|\det \mathcal{L}_{x_0}|.
\end{split}
\end{equation}
From \eqref{e-gue171022k}, we get \eqref{e-gue171021bII}.
\end{proof}



From Theorem~\ref{Morsea}, \eqref{e-gue171022ka} and Fatou's Lemma, we get weak Morse inequalities \eqref{e-gue170930II}.

\subsection{Proof of strong Morse inequalities}\label{s-gue200123}
In the rest of this section, we will prove \eqref{e-gue170930III} and complete the proof of Theorem~\ref{t-gue170930}. We first introduce some notations.
For every $m\in\mathbb Z$, we extend $\hat\Box^{(q)}_{\beta, m}$ to $L^2_{(0,q),m}(X)$ by
\begin{equation}\label{j2}
\hat\Box^{(q)}_{\beta, m}:{\rm Dom}\hat\Box^{(q)}_{\beta,m}\subset L^2_{(0,q),m}(X)\rightarrow L^2_{(0,q),m}(X),
\end{equation}
where ${\rm Dom}\Box^{(q)}_{\beta,m}=\{u\in L^2_{(0,q),m}(X);\, \hat\Box^{(q)}_{\beta, m}u\in L^2_{(0,q),m}(X)~~\text{in the sense of distributions}\}$.
The following is well-known (see Section 3 in~\cite{CHT})

\begin{lemma}\label{gI}
Fix $m\in\mathbb Z$. Then, $\hat\Box^{(q)}_{\beta, m}: {\rm Dom\,}\hat\Box^{(q)}_{\beta, m}\subset L^2_{(0,q), m}(X)\rightarrow L^2_{(0,q), m}(X)$ is a self-adjoint operator, the spectrum of $\hat\Box^{(q)}_{\beta, m}$ denoted by
${\rm Spec\,}\hat\Box^{(q)}_{\beta, m}$ is a discrete subset of $[0,\infty)$. For every $\lambda\in{\rm Spec\,}\hat\Box^{(q)}_{\beta, m}$, $\lambda$ is an eigenvalue of $\hat\Box^{(q)}_{\beta,m}$ and  the eigenspace 
\begin{equation}\label{h}
\hat H^q_{\beta, m,\lambda}(X)=\set{u\in {\rm Dom\,}\hat\Box^{(q)}_{\beta, m};\, \hat\Box^{(q)}_{\beta, m}u=\lambda u}
\end{equation}
is finite dimensional with $\hat H^q_{\beta, m,\lambda}(X)\subset\Omega^{0,q}_m(X)$.
\end{lemma}

For every $\lambda>0$, put $\hat H^q_{\beta,m,\leq\lambda}(X):=\oplus_{\mu\in{\rm Spec\,}\hat\Box^{(q)}_{\beta,m}, 0\leq\mu\leq\lambda}\hat H^q_{\beta,m,\mu}(X)$. Let $\{g_{1},...,g_{a_{m}}\}$ be an orthonormal basis of $\hat H^q_{\beta,m,\leq\lambda}(X)$.
Set $\hat \Pi_{\beta,m,\leq\lambda}^{(q)}(x)=\sum_{j=1}^{a_{m}}|g_{j}(x)|^{2}$. We are going to get asymptotic leading term for $\hat \Pi_{\beta,m,\leq m\nu_m}^{(q)}(x)$, where $\nu_m$ is some sequence with $\lim_{m\To+\infty}\nu_m=0$. We need the following which is well-known (see the proof of Proposition 2.13 in \cite{HL15}).

\begin{proposition}\label{nn}
For any $x_0\in X(q)\cap X_{\rm reg}$, there exists a sequence $\alpha_m\in\Omega^{0, q}_m(X)$ such that

\begin{enumerate}
  \item $\lim\limits_{m\rightarrow\infty}m^{-(n-1)}|\alpha_m(x_0)|^2=\frac{1}{2\pi^n}|\det \mathcal L_{x_0}|.$
  \item $\lim_{m\rightarrow\infty}\|\alpha_m\|^2_X=1.$
  \item $\lim_{m\rightarrow\infty}\left\|\left(m^{-1}\Box^{(q)}_{b, m}\right)^k\alpha_m\right\|_X=0, \forall k\in\mathbb N.$
  \item $\text{There exists a sequence}~ \delta_m~\text{independent of}~x_0~\text{with}~\delta_m\rightarrow
0~\text{such that}\\
\norm{m^{-1}\Box^{(q)}_{b, m}\alpha_m}_X\leq\delta_m$.
\end{enumerate}
\end{proposition}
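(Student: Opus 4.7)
The plan is to produce $\alpha_m$ as an explicit Gaussian peak at $x_0$ in BRT coordinates, in the style of~\cite{HL15}, and verify the four properties using the scaling technique of Section~\ref{s-gue171019} together with the pseudodifferential comparison in Theorem~\ref{t-gue171019I}. Since $x_0\in X_{{\rm reg\,}}$, by Remark~\ref{r-gue171021} we pick a BRT trivialization $(D,(z,\theta),\varphi)$ with $D=\{|z|<\delta\}\times(-\pi,\pi)$ and $\varphi(z)=\sum_{j=1}^{n-1}\lambda_j|z_j|^2+O(|z|^3)$, where $\lambda_1,\ldots,\lambda_{n-1}$ are the eigenvalues of $\mathcal{L}_{x_0}$; since $x_0\in X(q)$, after relabeling we may assume $\lambda_1,\ldots,\lambda_q<0<\lambda_{q+1},\ldots,\lambda_{n-1}$, and we set $J_0=(1,\ldots,q)$ and $\varphi_0(w):=\sum_j\lambda_j|w_j|^2$. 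The model weighted Kohn Laplacian $\Box^{(q)}_{\varphi_0}$ on $\mathbb{C}^{n-1}$ with weight $e^{-2\varphi_0}$ has, on $(0,q)$-forms, a one-dimensional kernel in the $\bar e^{J_0}$ component, spanned by an explicit Schwartz-class form $\psi_0$; pick the normalization of $\psi_0$ so that the asymptotics below produce (1) and (2).

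Fix a cutoff $\chi\in C^\infty_c(\tilde D)$ with $\chi\equiv 1$ near $0$ and define
\begin{equation*}
\alpha_m(z,\theta):=\chi(z)\,e^{im\theta}\,m^{(n-1)/2}\,\psi_0(\sqrt{m}\,z)\,e^{m\varphi_0(z)-m\varphi(z)},
\end{equation*}
extended by zero outside $D$. Then $T\alpha_m=im\,\alpha_m$, so $\alpha_m\in\Omega^{0,q}_m(X)$. Property (1) follows from $|\alpha_m(x_0)|^2=m^{n-1}|\psi_0(0)|^2$. For (2), a change of variable $w=\sqrt{m}\,z$ together with the expansion $m(\varphi(z)-\varphi_0(z))=O(m|z|^3)=O(|w|^3/\sqrt{m})$ on the support of $\chi$ converts $\|\alpha_m\|_X^2$ into $2\pi\int|\psi_0(w)|^2e^{-2\varphi_0(w)}\lambda(w/\sqrt{m})dv(w)(1+o(1))$, and dominated convergence with our choice of normalization gives $\|\alpha_m\|_X^2\to 1$ and the constant $\frac{1}{2\pi^n}|\det\mathcal{L}_{x_0}|$ in (1) after an explicit Gaussian integration.

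The substantive step is (3) and (4). By Theorem~\ref{t-gue171019I} applied to $u=\alpha_m$, it suffices to show that $\|(m^{-1}\hat\Box^{(q)}_\beta)^\ell\alpha_m\|_X$ is bounded in $m$ for each $\ell$ and that $\|m^{-1}\hat\Box^{(q)}_\beta\alpha_m\|_X\to 0$ uniformly in $x_0$. By Theorem~\ref{t-gue171014}, $\hat\ddbar_\beta$ and $\hat\ddbar_\beta^\star$ differ from $\ddbar_b$ and $\ddbar_b^\star$ by classical pseudodifferential operators of order zero; after the BRT substitution $v=\hat u\,e^{m\varphi}$ used in the proof of Theorem~\ref{Morsea} and the Section~\ref{s-gue171019} rescaling, $\alpha_m$ pulls back to the sequence $v^{(m)}(w)=\chi(w/\sqrt{m})\psi_0(w)$, which converges to $\psi_0$ in $C^\infty_{{\rm loc\,}}(\mathbb{C}^{n-1})$, while $m^{-1}\Box^{(q)}_b$ (respectively $m^{-1}\hat\Box^{(q)}_\beta$) pulls back to $\Box^{(q)}_{(m)}+O(m^{-1/2})$. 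Since $\Box^{(q)}_{(m)}\to\Box^{(q)}_{\varphi_0}$ on smooth forms of Schwartz decay and $\Box^{(q)}_{\varphi_0}\psi_0=0$, a Taylor expansion in $1/\sqrt{m}$ of the symbols combined with the decay of $\psi_0$ on $\{|w|\le\log m\}$ yields $\|\Box^{(q)}_{(m)}v^{(m)}\|_{2mF^*_m\varphi,0,\tilde D_{\log m}}=O(m^{-1/2}(\log m)^N)$ for some $N$; iterating the same analysis controls the higher powers. The main obstacle is the uniformity of the constants in $x_0$, which follows from the smooth dependence of $(\varphi,\bar e^{J_0},\lambda,\{\lambda_j\})$ on the base point, so a single sequence $\delta_m\to 0$ works for all $x_0$ in any fixed compact subset of $X(q)\cap X_{{\rm reg\,}}$.
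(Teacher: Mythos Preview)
Your Gaussian-peak construction in BRT coordinates is the right idea and is precisely the approach of~\cite[Proposition~2.13]{HL15}, to which the paper defers for this statement. Properties (a) and (b) follow as you indicate.

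The verification of (c) and (d), however, takes an unnecessary detour that introduces a genuine gap. You invoke Theorem~\ref{t-gue171019I} to reduce to bounds on $(m^{-1}\hat\Box^{(q)}_\beta)^\ell\alpha_m$, and then try to obtain those via the BRT/scaling machinery. But the scaling identities of Section~\ref{s-gue171019} and equations \eqref{e-gue171021pI}--\eqref{e-gue171021pII} are available precisely because $\Box^{(q)}_{b}$ is a \emph{differential} operator: the conjugation $u\mapsto e^{m\varphi}e^{-im\theta}u$ and the rescaling $F_m^*$ turn $m^{-1}\Box^{(q)}_{b,m}$ exactly into $\Box^{(q)}_{(m)}$. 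The operator $\hat\Box^{(q)}_\beta$ is genuinely pseudodifferential (it involves $(\tilde Q\tilde P^\star\tilde P)^{\pm 1/2}$) and does not localize under this procedure; the assertion that ``$m^{-1}\hat\Box^{(q)}_\beta$ pulls back to $\Box^{(q)}_{(m)}+O(m^{-1/2})$'' is not justified and is in fact the hard direction you are trying to avoid.

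The fix is simply to drop the detour. Apply the scaling identity directly to $\Box^{(q)}_{b,m}$: with $v^{(m)}=\chi(\cdot/\sqrt m)\psi_0$ you get $\|(m^{-1}\Box^{(q)}_{b,m})^k\alpha_m\|_X^2\le(2\pi)^{-1}\|(\Box^{(q)}_{(m)})^k v^{(m)}\|^2_{2mF_m^*\varphi,0,\tilde D_{\log m}}$ exactly as in \eqref{e-gue171021pIII}, and your analysis of $\Box^{(q)}_{(m)}v^{(m)}\to\Box^{(q)}_{\varphi_0}\psi_0=0$ then yields (c) and (d) with no reference to $\hat\Box^{(q)}_\beta$. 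This is the route taken in~\cite{HL15}, and it is also the logical order in the paper: Proposition~\ref{nn} is established for $\Box^{(q)}_{b}$, and only afterwards is Theorem~\ref{t-gue171019II} used to transfer the conclusion to $\hat\Box^{(q)}_\beta$ in Proposition~\ref{nn1}. Finally, note that (d) demands $\delta_m$ independent of $x_0$ over all of $X(q)\cap X_{{\rm reg\,}}$, not just over compact subsets; you should check that the error terms from the Taylor expansion of $\varphi$ and the frame are controlled by quantities depending only on global $C^k$ bounds for the metric and the defining function.
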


From Proposition~\ref{nn} and \eqref{e-gue171019yI}, we get

\begin{proposition}\label{nn1}
For any $x_0\in X(q)\cap X_{\rm reg}$, let $\alpha_m$ be given as in Proposition \ref{nn}. Then
\begin{enumerate}
  \item $\lim_{m\rightarrow\infty}\left\|\left(m^{-1}\hat\Box^{(q)}_{\beta, m}\right)^k\alpha_m\right\|_X=0, \forall k\in\mathbb N.$
  \item $\text{There exists a sequence}~ \tilde\delta_m~\text{independent of}~x_0~\text{with}~\tilde \delta_m\rightarrow
0~\text{such that}\\
\left(m^{-1}\hat\Box^{(q)}_{\beta, m}\alpha_m\big|\alpha_m\right)_X\leq\tilde\delta_m.$
\end{enumerate}
\end{proposition}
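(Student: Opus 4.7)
The plan is to deduce both statements directly from Theorem~\ref{t-gue171019II}, which is the key comparison estimate bounding powers of $m^{-1}\hat\Box^{(q)}_{\beta}$ against powers of $m^{-1}\Box^{(q)}_b$. Proposition~\ref{nn} already supplies everything we need about the right-hand side of Theorem~\ref{t-gue171019II} applied to $\alpha_m$; the work reduces to bookkeeping. Note $\alpha_m \in \Omega^{0,q}_m(X)$, so Theorem~\ref{t-gue171019II} applies verbatim.

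For part~(1), fix $k\in\mathbb N$ and apply Theorem~\ref{t-gue171019II} with $u = \alpha_m$:
\[
\norm{(m^{-1}\hat\Box^{(q)}_{\beta,m})^k\alpha_m}^2_X \leq \Td C_k\Bigl(\sum^{2k-1}_{\ell=0}\norm{(m^{-1}\Box^{(q)}_{b,m})^\ell\alpha_m}_X\Bigr)\Bigl(\norm{m^{-1}\Box^{(q)}_{b,m}\alpha_m}_X+\tfrac{1}{\sqrt{m}}\norm{\alpha_m}_X\Bigr).
\]
By items~(2) and~(3) of Proposition~\ref{nn}, the first factor stays bounded as $m\to\infty$ (each $\ell\geq 1$ term tends to $0$, while the $\ell=0$ term tends to $1$); by items~(2) and~(3) (or just~(4)), the second factor tends to $0$. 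Hence the left-hand side tends to $0$, which is~(1).

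For part~(2), first observe that $\hat\Box^{(q)}_{\beta,m}$ is non-negative and (formally) self-adjoint with respect to $(\cdot|\cdot)_X$, so by Cauchy--Schwarz
\[
(m^{-1}\hat\Box^{(q)}_{\beta,m}\alpha_m\mid\alpha_m)_X \leq \norm{m^{-1}\hat\Box^{(q)}_{\beta,m}\alpha_m}_X\,\norm{\alpha_m}_X.
\]
Apply Theorem~\ref{t-gue171019II} with $k=1$:
\[
\norm{m^{-1}\hat\Box^{(q)}_{\beta,m}\alpha_m}^2_X \leq \Td C_1\bigl(\norm{\alpha_m}_X+\norm{m^{-1}\Box^{(q)}_{b,m}\alpha_m}_X\bigr)\bigl(\norm{m^{-1}\Box^{(q)}_{b,m}\alpha_m}_X+\tfrac{1}{\sqrt m}\norm{\alpha_m}_X\bigr).
\]
The uniform-in-$x_0$ control of $\norm{m^{-1}\Box^{(q)}_{b,m}\alpha_m}_X$ by the $x_0$-free sequence $\delta_m\to 0$ in item~(4) of Proposition~\ref{nn}, together with a uniform bound $\norm{\alpha_m}_X\leq C$ (available from the explicit scaled-model construction of $\alpha_m$; one can even take $C=2$ for $m$ large), shows that the right-hand side is $\leq C'(\delta_m + m^{-1/2})$ with $C'$ independent of $x_0$. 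Combining with the Cauchy--Schwarz bound produces the desired $\tilde\delta_m$.

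The only delicate point is ensuring the uniformity of $\tilde\delta_m$ in the base point $x_0$ for part~(2); this is exactly where invoking item~(4) of Proposition~\ref{nn} (rather than merely~(3)) is essential, and where one must record that the uniform upper bound on $\norm{\alpha_m}_X$ is a by-product of the model form construction in~\cite{HL15}. Apart from that, the proof is a two-line application of Theorem~\ref{t-gue171019II}.
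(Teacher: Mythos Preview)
Your proposal is correct and follows exactly the route the paper takes: the paper's entire proof is the single sentence ``From Proposition~\ref{nn} and \eqref{e-gue171019yI}, we get\ldots'', and you have simply unpacked that citation. Your identification of the one genuinely delicate point---the uniform-in-$x_0$ bound on $\norm{\alpha_m}_X$ needed for part~(b), obtained from the explicit model construction in \cite{HL15}---is accurate and is something the paper leaves implicit.
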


Now we are in a position to prove the following local strong Morse inequalities.

\begin{theorem}\label{o}
For any sequence $\nu_m>0$ with $\nu_m\rightarrow0$ as $m\rightarrow+\infty$, there exists a constant $C>0$ independent of $m$ and $x\in X$ such that
\begin{equation}\label{e-gue170919}
m^{-(n-1)}\hat\Pi^{q}_{\beta, m, \leq m\nu_m}(x)\leq C,\ \ \forall m\in\mathbb N,\ \ \forall x\in X.
\end{equation}

Moreover, there is a sequence $\tilde\delta_m>0$ with $\tilde\delta_m\rightarrow0$ as $m\rightarrow\infty$, such that for any sequence $\nu_m>0$ with $\lim_{m\To+\infty}\nu_m=0$ and $\lim\limits_{m\rightarrow+\infty}\frac{\tilde \delta_m}{\nu_m}=0$, we have
\begin{equation}\label{p}
\lim\limits_{m\rightarrow+\infty}m^{-(n-1)}\hat\Pi^{q}_{\beta, m, \leq m\nu_m}(x)=\frac{1}{2\pi^n}|\det\mathcal L_x|\cdot1_{X(q)}(x),\ \  \forall x\in X_{{\rm\, reg}}.
\end{equation}
\end{theorem}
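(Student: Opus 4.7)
The plan is to split the theorem into the uniform upper bound \eqref{e-gue170919} and the limit \eqref{p}, and in both halves to reuse the scaling machinery developed for ${\rm Ker\,}\hat\Box^{(q)}_{\beta,m}$ in Theorems~\ref{Morsea} and \ref{Morse2}, with ``$u\in{\rm Ker\,}\hat\Box^{(q)}_{\beta,m}$'' replaced by ``$u\in\hat H^q_{\beta,m,\leq m\nu_m}(X)$''. The key observation that makes this adaptation painless is that, by the spectral theorem for $\hat\Box^{(q)}_{\beta,m}$ (Theorem~\ref{gI}), any $u\in\hat H^q_{\beta,m,\leq m\nu_m}(X)$ with $\|u\|_X\leq 1$ satisfies $\|(m^{-1}\hat\Box^{(q)}_{\beta,m})^k u\|_X \leq \nu_m^k \leq 1$ for every $k\in\mathbb N_0$ once $m$ is large. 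Theorem~\ref{t-gue171019} then gives uniform bounds on every $\|(m^{-1}\Box^{(q)}_{b,m})^k u\|_X$, which is exactly the input the scaling argument consumes at the step \eqref{e-gue171021g}.

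For the uniform bound \eqref{e-gue170919}, I introduce $S^{(q)}_{m,J,\leq m\nu_m}(x) := \sup\{|\alpha_J(x)|^2 : \alpha\in\hat H^q_{\beta,m,\leq m\nu_m}(X),\,\|\alpha\|_X=1\}$. An obvious variant of Lemma~\ref{l-gue171021} gives
\[
\hat\Pi^{(q)}_{\beta,m,\leq m\nu_m}(x) = \sideset{}{'}\sum_{|J|=q} S^{(q)}_{m,J,\leq m\nu_m}(x),
\]
so it suffices to bound each $m^{-(n-1)}S^{(q)}_{m,J,\leq m\nu_m}(x_0)$, which is achieved by repeating the scaling step \eqref{e-gue171021p}--\eqref{e-gue171021gII} applied to a near-maximizer. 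For the limsup half of \eqref{p} I use the same scaling at $x_0\in X_{{\rm reg}}$, where Remark~\ref{r-gue171021} permits $\delta=\pi$ and hence $\|v^{(m)}\|^2_{2mF_m^*\varphi,0,\tilde D_{\log m}}\leq\tfrac{1}{2\pi}$; together with Theorem~\ref{t-gue171020w} this gives
\[
\limsup_{m\to\infty} m^{-(n-1)}S^{(q)}_{m,J,\leq m\nu_m}(x_0) \leq \tfrac{1}{2\pi^n}\mathrm{1}_{X(q)}(x_0)|\det\mathcal L_{x_0}|\delta_-(J),
\]
and summing over $J$ delivers the matching upper bound in \eqref{p}, which collapses to $0$ when $x_0\in X_{{\rm reg}}\setminus X(q)$.

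For the liminf half of \eqref{p}, fix $x_0\in X(q)\cap X_{{\rm reg}}$, choose $\alpha_m$ from Proposition~\ref{nn1} with its associated sequence $\tilde\delta_m\to 0$, and let $P_m$ be the orthogonal projection onto $\hat H^q_{\beta,m,\leq m\nu_m}(X)$. The spectral gap gives $(\hat\Box^{(q)}_{\beta,m}(I-P_m)\alpha_m\,|\,(I-P_m)\alpha_m)_X \geq m\nu_m\|(I-P_m)\alpha_m\|_X^2$, which combined with Proposition~\ref{nn1}(2) yields $\|(I-P_m)\alpha_m\|_X^2 \leq \tilde\delta_m/\nu_m \to 0$, so $\|P_m\alpha_m\|_X\to 1$. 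Since $P_m\alpha_m\in\hat H^q_{\beta,m,\leq m\nu_m}(X)$, the reproducing inequality
\[
|P_m\alpha_m(x_0)|^2 \leq \hat\Pi^{(q)}_{\beta,m,\leq m\nu_m}(x_0)\,\|P_m\alpha_m\|_X^2
\]
reduces the liminf to showing $m^{-(n-1)}|P_m\alpha_m(x_0)|^2 \to \tfrac{1}{2\pi^n}|\det\mathcal L_{x_0}|$.

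The main obstacle is precisely this last pointwise statement: the $L^2$ smallness of $\gamma_m := (I-P_m)\alpha_m$ does not a priori control $\gamma_m(x_0)$. The remedy is to feed $\gamma_m$ itself into the scaling machine. By Proposition~\ref{nn1}(1) together with $\|(m^{-1}\hat\Box^{(q)}_{\beta,m})^k P_m\alpha_m\|_X \leq \nu_m^k$, every norm $\|(m^{-1}\hat\Box^{(q)}_{\beta,m})^k\gamma_m\|_X$ tends to zero, and Theorem~\ref{t-gue171019} upgrades this to $\|(m^{-1}\Box^{(q)}_{b,m})^k\gamma_m\|_X\to 0$ for every $k\geq 0$ (including $k=0$, which is just the $L^2$ smallness). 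Setting $v^{(m)}_\gamma := m^{-(n-1)/2}e^{m\varphi}\hat\gamma_m(z/\sqrt m)$ in a BRT chart at $x_0$, the analogue of \eqref{e-gue171021pIII} forces $\|(\Box^{(q)}_{(m)})^k v^{(m)}_\gamma\|_{2mF_m^*\varphi,0,\tilde D_{\log m}}\to 0$ for every $k$, so Proposition~\ref{k2} drives every scaled Sobolev norm on a fixed disc to zero and Sobolev embedding yields $v^{(m)}_\gamma(0)\to 0$, i.e.\ $m^{-(n-1)}|\gamma_m(x_0)|^2\to 0$. Combined with Proposition~\ref{nn}(1), this gives $m^{-(n-1)}|P_m\alpha_m(x_0)|^2 \to \tfrac{1}{2\pi^n}|\det\mathcal L_{x_0}|$, completing the proof.
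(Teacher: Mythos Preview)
Your proposal is correct and follows essentially the same route as the paper: the uniform bound and the $\limsup$ are obtained by rerunning the scaling argument of Theorem~\ref{Morsea} with the input $\|(m^{-1}\hat\Box^{(q)}_{\beta,m})^k u\|_X\le\nu_m^k$ for $u\in\hat H^q_{\beta,m,\le m\nu_m}(X)$, and the $\liminf$ comes from splitting the peak section $\alpha_m$ of Proposition~\ref{nn} as $P_m\alpha_m+(I-P_m)\alpha_m$, using the spectral gap and Proposition~\ref{nn1} to kill $\|(I-P_m)\alpha_m\|_X$, and then feeding $(I-P_m)\alpha_m$ into the scaling machine to kill it pointwise. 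The only difference is cosmetic: for the pointwise smallness of $\gamma_m=(I-P_m)\alpha_m$ you invoke Theorem~\ref{t-gue171019} (after first showing every $\|(m^{-1}\hat\Box^{(q)}_{\beta,m})^\ell\gamma_m\|_X\to0$), whereas the paper invokes the factorized estimate of Theorem~\ref{t-gue171019I} directly on $\alpha_{m,2}$; both routes are valid and yield the same conclusion.
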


\begin{proof}
The proof of \eqref{e-gue170919} is essentially the same as the proof of \eqref{e-gue171021bI}. We only need to prove \eqref{p}. Fix $x_0\in X_{{\rm reg\,}}$. We can repeat the proof of \eqref{e-gue171021bII} with minor change and get that for any sequence $\nu_m>0$ with $\nu_m\rightarrow0$ as $m\rightarrow+\infty$, we have
\begin{equation}\label{e-gue171021bIIq}
\limsup_{m\rightarrow\infty}m^{-(n-1)}\hat\Pi^{(q)}_{\beta,m,\leq m\nu_m}(x_0)\leq\frac{1}{2\pi^{n}}|\det\mathcal{L}_{x_0}|\cdotp
\mathrm{1}_{X(q)}(x_0).
\end{equation}
Hence, we only need to consider $x_0\in X(q)$.  Now, assume that $x_0\in X(q)$.
Let $\tilde\delta_m>0$ be a sequence as in Proposition~\ref{nn1} and let $\nu_m$ be any sequence with $\lim_{m+\infty}\nu_m=0$ and $\lim\limits_{m\rightarrow+\infty}\frac{\tilde \delta_m}{\nu_m}=0$.
Denote by $\hat H_{\beta,m,>m\nu_{m}}^{q}(X)$ the linear span of $\hat H_{\beta,m,\lambda}^{q}(X)$ with $\lambda>m\nu_{m}$. It is clear that
\[L_{(0,q),m}^{2}(X)=\hat H_{\beta,m,\leq m\nu_{m}}^{q}(X)\oplus
\ol{\hat H_{\beta,m,> m\nu_{m}}^{q}(X)},\]
where $\ol{\hat H_{\beta,m,> m\nu_{m}}^{q}(X)}$ denotes the $L^2$ closure of $\hat H_{\beta,m,> m\nu_{m}}^{q}(X)$. Let $\alpha_m\in\Omega^{0,q}_m(X)$ be as in Proposition~\ref{nn}. We have $\alpha_{m}=\alpha_{m, 1}+\alpha_{m, 2}$, where
$\alpha_{m, 1}\in\hat H_{\beta,m,\leq m\nu_{m}}^{q}(X)$, $\alpha_{m, 2}\in \ol{\hat H_{\beta, m ,> m\nu_{m}}^{q}(X)}$. By property (b) in Proposition~\ref{nn1}, we have
\begin{equation}\label{e-gue171022w}
\begin{split}
(\,\alpha_{m, 2}\,|\,\alpha_{m, 2}\,)_X&\leq \frac{1}{m\nu_{m}}(\,\hat\Box_{\beta,m}^{(q)}\alpha_{m, 2}\,|\,\alpha_{m, 2}\,)_X\\
&\leq\frac{1}{m\nu_{m}}(\,\hat\Box_{\beta,m}^{(q)}\alpha_{m}\,|\,\alpha_{m}\,)_X\leq \frac{\tilde \delta_{m}}{\nu_{m}}\rightarrow 0.
\end{split}
\end{equation}
Thus
\begin{equation}\label{e-gue171102cw}
\|\alpha_{m,2}\|_X\rightarrow 0
\end{equation}
and hence
\begin{equation}\label{e-gue171022v}
\|\alpha_{m,1}\|_X\rightarrow 1.
\end{equation}
Now, we claim that
\begin{equation}\label{e-gue171022vI}
m^{-(n-1)}|\alpha_{m, 2}(x_0)|^{2}\rightarrow 0.
\end{equation}
Let $(D,x=(z,\theta),\varphi)$ be a BRT trivialization such that $x(x_0)=0$ and \eqref{e-gue171020} hold, where
\[
D=\{(z, \theta)\in\Complex^{n-1}\times\Real;\,
\abs{z}<\delta, -\pi<\theta<\pi\},\ \ \delta>0.\]
Let $\alpha_{m, 2}=\hat\alpha_{m, 2}(z)e^{im\theta}$, $\beta_{m, 2}=\hat\alpha_{m, 2}(z)e^{m\varphi(z)}$.
Then
\begin{equation}\label{e-gue171022vII}
\lim_{m\rightarrow\infty}m^{-(n-1)}|\alpha_{m, 2}(0)|^{2}=
\lim_{m\rightarrow\infty}m^{-(n-1)}|\hat\alpha_{m, 2}(0)|^{2}=
\lim_{m\rightarrow\infty}m^{-(n-1)}|\beta_{m, 2}(0)|^{2}.
\end{equation}
We write
$\beta_{(m), 2}(z)=m^{-\frac{n-1}{2}}\beta_{m,2}(\frac{z}{\sqrt{m}})$.
By \eqref{e-gue171020z} and Sobolev embedding theorem, we have
\begin{equation}\label{e-gue171022vIII}
\begin{split}
&m^{-(n-1)}|\alpha_{m, 2}(0)|^{2}=|\beta_{(m), 2}(0)|^{2}\\
&\leq C\bigl(\|\beta_{(m),2}(z)\|_{2mF_{m}^{\ast}\varphi,0,D_{r}}^{2} +
\sum^{n+1}_{j=1}\|(\Box_{(m)}^{(q)})^j\beta_{(m), 2}(z)\|_{2mF_{m}^{\ast}\varphi,0,D_{r}}^{2}\bigr),
\end{split}
\end{equation}
where $r>0$ and $C>0$ are constants independent of $m$. Since $\|\alpha_{m,2}\|_X\rightarrow 0$, we have
\begin{equation}\label{e-gue171022vaI}
\|\beta_{(m),2}(z)\|_{2mF_{m}^{\ast}\varphi,D_{r}}^{2} \rightarrow 0.
\end{equation}
Moreover, as in the proof of Theorem~\ref{Morsea}, it is straightforward to check that for every $k\in\mathbb N$, we have
\begin{equation}\label{e-gue171022vaII}
\|(\Box_{(m)}^{(q)})^k\beta_{(m), 2}(z)\|_{2mF_{m}^{\ast}\varphi,0,D_{r}}^{2}\leq\frac{1}{2\pi}\norm{(\frac{1}{m}\Box^{(q)}_b)^k\alpha_{m,2}}_X.
\end{equation}
Fix $k\in\mathbb N$. From \eqref{e-gue171022vaII}, property (a) in Proposition~\ref{nn1}, \eqref{e-gue171019y} and \eqref{e-gue171102cw}, we have
\begin{equation}\label{e-gue171022vaIII}
\begin{split}
&\|(\Box_{(m)}^{(q)})^k\beta_{(m), 2}(z)\|_{2mF_{m}^{\ast}\varphi,0,D_{r}}^{2}\\
&\leq\frac{1}{2\pi}\norm{(\frac{1}{m}\Box^{(q)}_b)^k\alpha_{m,2}}_X\\
&\leq C_k\Bigr(\sum^{2k-1}_{\ell=0}\norm{(\frac{1}{m}\hat\Box^{(q)}_\beta)^\ell\alpha_{m,2}}_X\Bigr)
\Bigr(\norm{(\frac{1}{m}\hat\Box^{(q)}_\beta)\alpha_{m,2}}_X+\frac{1}{\sqrt{m}}\norm{\alpha_{m,2}}_X\Bigr)\\
&\leq C_k\Bigr(\sum^{2k-1}_{\ell=0}\norm{(\frac{1}{m}\hat\Box^{(q)}_\beta)^\ell\alpha_m}_X\Bigr)
\Bigr(\norm{(\frac{1}{m}\hat\Box^{(q)}_\beta)\alpha_m}_X+\frac{1}{\sqrt{m}}\norm{\alpha_{m,2}}_X\Bigr)\\
&\To0\ \ \mbox{as $m\To+\infty$},
\end{split}
\end{equation}
where $C_k>0$ is a constant independent of $m$.
From \eqref{e-gue171022vaIII}, \eqref{e-gue171022vaI} and \eqref{e-gue171022vIII}, the claim \eqref{e-gue171022vI} follows.

From \eqref{e-gue171022vI} and property (a) in Proposition~\ref{nn}, we deduce that
\begin{equation}\label{e-gue171022vw}
m^{-(n-1)}|\alpha_{m, 1}(x_0)|^{2}\rightarrow\frac{1}{2\pi^{n}}|\det \mathcal{L}_{x_0}|.
\end{equation}
Since
\begin{equation}\label{e-gue171022vwI}
m^{-(n-1)}\hat\Pi_{\beta,m,\leq m\nu_{m}}^{(q)}(x_0)\geq
m^{-(n-1)}\frac{|\alpha_{m, 1}(x_0)|^{2}}{\|\alpha_{m, 1}\|^{2}_X},
\end{equation}
and note that $\|\alpha_{m,1}\|_X\rightarrow 1$, we conclude that
\begin{equation}\label{e-gue171022vwII}
\liminf_{m\rightarrow\infty}m^{-(n-1)}\hat\Pi_{\beta,m,\leq m\nu_{m}}^{(q)}(x_0)
\geq\frac{1}{2\pi^{n}}|\det \mathcal{L}_{x_0}|.
\end{equation}
From \eqref{e-gue171022vwII} and \eqref{e-gue171021bIIq}, we get \eqref{p}.
\end{proof}

Let $\nu_m$ be a sequence with $\lim_{m\To+\infty}\nu_m=0$ and $\lim\limits_{m\rightarrow+\infty}\frac{\tilde \delta_m}{\nu_m}=0$, where $\tilde\delta_m$ is a sequence as in Theorem~\ref{o}.
By integrating Theorem~\ref{o} and using the Lebesgue Dominated Convergence Theorem, we obtain
\begin{equation}\label{e-gue171023}
{\rm dim\,}\hat H^q_{\beta, m, \leq m\nu_m}(X)=\frac{m^{n-1}}{2\pi^n}\int_{X(q)}|\det\mathcal L_x|dv_X(x)+o(m^{n-1}), ~\text{as}~m\rightarrow\infty.
\end{equation}
From \eqref{e-gue171023} and by
applying the algebraic argument in Lemma 3.2.12 in~\cite{MM07} and~\cite{Ma96}, we conclude that for every $q=0,1,2,\ldots,n-1$, we have
\begin{equation}\label{e-gue171023I}
\sum_{j=0}^q(-1)^{q-j}{\rm dim\,}{\rm Ker\,}\hat\Box^{(j)}_{\beta,m}\leq\frac{m^{n-1}}{2\pi^n}\sum_{j=0}^q(-1)^{q-j}\int_{X(j)}|\det\mathcal L_x|dv_X(x)+o(m^{n-1}).
\end{equation}
From \eqref{e-gue171023I} and Theorem~\ref{t-gue171014pm}, we get \eqref{e-gue170930III}. The proof of Theorem~\ref{t-gue170930} is complete.

We conclude this section with the following
\begin{proof}[Proof of Theorem \ref{t-gue2001}]
We recall the notations used in Section $4$. Consider the operator
\begin{equation*}
\Box_{-}^{(q)}:=(\ddbar\rho)^{\wedge,\star}\gamma\ddbar\tilde P: \Omega^{0,q}(X)\rightarrow
\Omega^{0,q}(X).
\end{equation*}
From Proposition $4.1$ in Part II of \cite{H08}, we see that 
\begin{equation}\label{e-201}
\Box_{-}^{(q)}=\frac{1}{2}(iT+\sqrt{-\Delta_X})+L_0,
\end{equation}
where $L_0$ is a classical pseudodifferential operator of order zero.
There is a constant $C_0>0$ such that
\begin{equation}\label{e-202}
\|L_0v\|_X\leq C_0\|v\|_X, \forall v\in \Omega^{0,q}(X).
\end{equation}
Let $m_1>2\max\{C_0, m_0\}$, where $m_0$ is as in Lemma \ref{t-gue171011I}.
Fix $m\in\mathbb{Z}, m<0, |m|\geq m_1$. Let $u\in\Ker\Box_m^{(q)}$. We have 
$\Box_{f}^{(q)}u=\Box_{m}^{(q)}u=0$.
Then by \eqref{e-gue171011II} we see that $u=\tilde P\gamma u$. Let $v=\gamma u$.
Then
\begin{equation}\label{e-203}
\Box_{-}^{(q)}v=(\ddbar\rho)^{\wedge,\star}\gamma\ddbar\tilde P\gamma u=(\ddbar\rho)^{\wedge,\star}\gamma\ddbar u=0.
\end{equation}
From \eqref{e-201}, \eqref{e-202} and \eqref{e-203}, we get
\begin{equation}\label{e-204}
\begin{split}
\frac{1}{2}(iT+\sqrt{-\Delta_X})v&=-L_0 v,\\
\abs{\frac{1}{2}(iTv|v)_X+\frac{1}{2}(\sqrt{-\Delta_X}v|v)_X}&\leq C_0\|v\|_X^2.
\end{split}
\end{equation}
Note that $iTv=-mv$, $(\sqrt{-\Delta_X}v|v)_X\geq 0$ and 
\[\abs{\frac{1}{2}(iTv|v)_X+\frac{1}{2}(\sqrt{-\Delta_X}v|v)_X}=\frac{1}{2}(iTv|v)_X+\frac{1}{2}(\sqrt{-\Delta_X}v|v)_X\geq\frac{|m|}{2}\|v\|^2_X.\]
From this observation and \eqref{e-204} we get
\begin{equation}\label{e-205}
\frac{|m|}{2}\|v\|^2_X\leq C_0\|v\|^2_X.
\end{equation}
Since $|m|>2C_0$, we have $\|v\|^2_X=0$ and hence $u=\tilde Pv=0$.
The theorem follows.
\end{proof}}

\section{Appendix}
In this section, we prove Theorem \ref{t-gue171008pm}.
Let $\ddbar\rho^{\wedge}: T^{*0,q}M'\To T^{*0,q+1}M'$ be the operator given by wedge multiplication by $\ddbar\rho$ and
let $\ddbar\rho^{\wedge,\star}:T^{*0,q+1}M'\To T^{*0,q}M'$ be its adjoint with respect to $\langle\,\cdot\,|\,\cdot\,\rangle$, that is,
\begin{equation}\label{e-gue171006e}
\langle\,\ddbar\rho\wedge u\,|\,v\,\rangle=\langle\,u\,|\,\ddbar\rho^{\wedge,\star} v\,\rangle,\ \ u\in T^{*0,q}M',\ \ v\in T^{*0,q+1}M'.
\end{equation}
Denote by $\gamma$ the operator of restriction to $X$. By using the calculation in page 13 of~\cite{FK72}, we can check that
\begin{equation}\label{Neumann condition}
\begin{split}
&{\rm Dom\,}\ddbar_m^{\star}\cap\Omega^{0,q+1}_{m}(\ol M)=\{u\in\Omega^{0,q+1}_{m}(\ol M);\, \gamma \ddbar\rho^{\wedge,\star}u=0 \},\\
&{\rm Dom\,}\Box^{(q)}_{m}\cap \Omega^{0,q}_{m}(\ol M)=\{u\in \Omega^{0,q}_{m}(\ol M);\,
\gamma\ddbar\rho^{\wedge,\star}u=0,  \gamma\ddbar\rho^{\wedge,\star}\ddbar u=0 \}.
\end{split}
\end{equation}
Let $\ddbar^\star_f:\Omega^{0,q+1}(M')\To\Omega^{0,q}(M')$ be the formal adjoint of $\ddbar$ with respect to $(\,\cdot\,|\,\cdot\,)_{M'}$, that is,
\[(\,\ddbar u\,|\,v\,)_{M'}=(\,u\,|\,\ddbar^\star_fv\,)_{M'},\ \ \forall u\in\Omega^{0,q}_0(M'),\ \ \forall v\in\Omega^{0,q+1}(M').\]
It is easy to see that if $u\in{\rm Dom\,}\ddbar_m^{\star}\cap\Omega^{0,q+1}_{m}(\ol M)$, then
\begin{equation}\label{e-gue171007s}
\ddbar^{\star}_mu=\ddbar^\star_fu.
\end{equation}

Fix $p\in X$. From (\ref{e-gue171006aI}) it is easy to see that  there exist an open neighborhood $U$ of $p$ in $M'$ and a local orthonormal frame $\{\omega^j\}_{j=1}^n$ of $T^{*1,0}M'$ on $U$ with $\omega^n=\frac{1}{\langle\partial\rho|\partial\rho\rangle^{\frac12}}\partial\rho$. Let $\{L_j\}_{j=1}^n\subset T^{1,0}M'$ be the dual frame of $\{\omega^j\}_{j=1}^n$. We need

\begin{lemma}\label{lem1-170912}
Let $p\in X$.  Let $U$ and $\{L_j\}_{j=1}^n$ be as above. Then there exist holomorphic coordinates $z=(z_1,\ldots,z_n)$ centered at $p$ defined in an open neighborhood $U_0\Subset U$ of $p$ in $M'$ such that
\begin{equation}\label{e-gue171006ycbI}
\begin{split}
&L_j=\frac{\partial}{\partial z_j}+O(\abs{z}),\ \ j=1,\ldots,n,\\
&i(L_n-\overline L_n)=cT+\sum_{j=1}^{n-1}(b_jL_j+\overline b_j\overline L_j)+O(\abs{z}),
\end{split}
\end{equation}
 where $c\neq0$ and $b_j$, $j=1,\ldots,n-1$, are constants.
\end{lemma}

\begin{proof}
First, we can choose holomorphic coordinates $z=(z_1,\ldots,z_n)$ centered at $p$ defined in an open neighborhood $U_0\Subset U$ of $p$ in $M'$ such that
\begin{equation}\label{c1-gue170912}
L_j(p)=\frac{\partial}{\partial z_j}|_p,\ \ j=1,2,\ldots,n.
\end{equation}
Note that $L_j(\rho)=0$, $j=1,\ldots,n-1$ and  $L_n(\rho)=\frac{1}{\sqrt{2}}+O(\rho)$. From this observation and \eqref{c1-gue170912}, we get
 $\frac{\partial\rho}{\partial z_j}(p)=0, \forall j=1, \cdots, n-1$ and $\frac{\partial\rho }{\partial z_n}(p)=\frac{1}{\sqrt 2}$ and hence
\begin{equation}\label{e-gue171007w}
\rho=\frac{\partial\rho}{\partial z_n}(p)z_n+\frac{\partial\rho}{\partial\overline z_n}(p)\overline z_n+O(|z|^2)=\sqrt{2}{\rm Re\,}z_n+O(|z|^2).
\end{equation}

Write $T=a_n\frac{\partial}{\partial z_n}+\overline a_n\frac{\partial}{\partial z_n}+\sum_{j=1}^{n-1}(a_j\frac{\partial}{\partial z_j}+\overline a_j\frac{\partial}{\partial \overline z_j})$, where $a_j\in C^\infty(U_0)$, $j=1,\ldots,n$.
Since $(T\rho)(p)=0$, we get $a_n(p)+\ol a_n(p)=0$ and hence $a_n(p)=\frac{i}{c}$, for some constant $c$. Since $T$ is transversal to $T^{1,0}X\oplus T^{0,1}X$ and $T^{1,0}_pX={\rm span\,}\set{L_1(p),\ldots,L_{n-1}(p)}$, we deduce that $c\neq0$.
Since $L_n=\frac{\partial}{\partial z_n}+O(|z|)$, the lemma follows.
\end{proof}

\begin{lemma}\label{main theorem1-170913}
There exists $C_{m}>0$ such that for every $u\in{\rm Dom\,}\Box^{(q)}_m\cap\Omega^{0, q}_m(\overline M)$, we have
\begin{equation}\label{e-gue171007a}
\|u\|_{1,\ol M}\leq C_{m}(\|\Box^{(q)}_mu\|_{M}+\|u\|_{M}).
\end{equation}
\end{lemma}

\begin{proof}
Step 1. We first prove \eqref{e-gue171007a} when $q=0$. Let $u\in{\rm Dom\,}\Box^{(0)}_m\cap C^\infty_m(\overline M)$.
Fix $p\in X$. Let $z=(z_1,\ldots,z_n)$ be holomorphic coordinates centered at $p$ defined on an open neighborhood $U_0$ of $p$ in $M'$ such that \eqref{e-gue171006ycbI} holds. We will use the same notations as in Lemma~\ref{lem1-170912}. Let $\chi\in C^\infty_0(U_0)$ and put $v:=\chi u$. It is easy to see that
\begin{equation}\label{e-gue171008}
\begin{split}
\|v\|^2_{1,\ol M}&\leq c_1\Bigr( \| v\|^{2}_M+\sum_{j=1}^{n}\|\bar L_{j} v\|^{2}_M+\sum_{j=1}^{n}\|L_{j}v\|^{2}_M\Bigr)\\
&\leq c_2\Bigr( \|v\|^{2}_M+\|\ddbar v\|^{2}_M+\sum_{j=1}^{n-1}\|L_{j} v\|^{2}_M+\|L_{n}v\|^{2}_M\Bigr),
\end{split}
\end{equation}
where $c_1>0$, $c_2>0$ are constants. Let $L\in C^\infty(U_0,\Complex TM')$, we have the following formula due to Stokes' theorem
\begin{equation}\label{e-gue171008I}
\int_{M}(Lf)gdv_{M'}=-\int_{M}f(Lg)dv_{M'}+\int_{M'}hfgdv_{M'}+\int_{X}L(\rho)fgdv_{X},\  \ f, g\in C^\infty_0(U_0),
\end{equation}
where $h$ is a smooth function. From \eqref{e-gue171008I} and notice that $L_j(\rho)=0$, $j=1,\ldots,n-1$, we can check that when $j=1,\ldots,n-1$,
\begin{equation}\label{e-gue171008II}
\begin{split}
(\,L_{j}v\,|\,L_{j}v\,)_M&=-(\,v, |\,\ol L_{j}L_{j}v\,)+O(\|v\|_M\cdot\|L_{j}v\|_M)\\
&=-(\,v\,|\,[\bar L_{j},L_{j}]v\,)_M-(\,v\,|\, L_{j}\ol L_{j}v\,)_M+O(\|v\|_M\cdot\|L_{j}v\|_M)  \\
&=(\,v\,|\,[L_{j},\bar L_{j}]v\,)_M+\|\ol L_{j}v\|^{2}_M+O(\|v\|_M\|\ol L_{j}v\|_M)+O(\|v\|_M\cdot\|L_{j}v\|_M)\\
&=\|\overline L_jv\|^2_M+O(\|v\|_M\cdot\|v\|_{1,\ol M}).
\end{split}
\end{equation}
From \eqref{e-gue171008II} and \eqref{e-gue171008}, we deduce that
\begin{equation}\label{e-gue171008III}
\|v\|^{2}_{1,\ol M}\leq c_3\Bigr(\|L_{n}v\|^{2}_M+\|v\|^{2}_M+\|\ddbar v\|^{2}_M\Bigr),
\end{equation}
where $c_3>0$ is a constant. From \eqref{e-gue171006ycbI}, we see that
\begin{equation}\label{e-gue171008a}
\begin{split}
\norm{L_nv}^2_M&\leq c_4\Bigr(\norm{\ol L_nv}^2_M+\norm{(L_n-\ol L_n)v}^2_M\Bigr)\\
&\leq c_5\Bigr(\norm{\ol L_nv}^2_M+\norm{Tv}^2_M+\sum^{n-1}_{j=1}(\norm{L_jv}^2_M+\norm{\ol L_jv}^2_M)+\varepsilon(U_0)\norm{v}^2_{1,\ol M}\Bigr),
\end{split}
\end{equation}
and
\begin{equation}\label{e-gue171030}
\begin{split}
\norm{\ol L_nv}^2_M&\leq\hat c_4\Bigr(\norm{L_nv}^2_M+\norm{(L_n-\ol L_n)v}^2_M\Bigr)\\
&\leq\hat c_5\Bigr(\norm{L_nv}^2_M+\norm{Tv}^2_M+\sum^{n-1}_{j=1}(\norm{L_jv}^2_M+\norm{\ol L_jv}^2_M)+\hat\varepsilon(U_0)\norm{v}^2_{1,\ol M}\Bigr),
\end{split}
\end{equation}
where $c_4>0$, $c_5>0$, $\hat c_4>0$, $\hat c_5>0$ are constants and $\varepsilon(U_0)>0$, $\hat\varepsilon(U_0)>0$ are constants depending on $U_0$ with $\varepsilon(U_0)\To0$ if $U_0\To\set{p}$, $\hat\varepsilon(U_0)\To0$ if $U_0\To\set{p}$. From \eqref{e-gue171008II},
\eqref{e-gue171008III} and \eqref{e-gue171008a}, we see that if $U_0$ is small enough, then
\begin{equation}\label{e-gue171008aI}
\|v\|^{2}_{1,\ol M}\leq c_6\Bigr(\|Tv\|^{2}_M+\|v\|^{2}_M+\|\ddbar v\|^{2}_M\Bigr),
\end{equation}
where $c_6>0$ is a constant. From \eqref{e-gue171008aI}, by using partition of unity and notice that $Tu=imu$, we deduce that
\begin{equation}\label{e-gue171008aII}
\|u\|^{2}_{1,\ol M}\leq C_{m}\Bigr(\|u\|^{2}_M+\|\ddbar_mu\|^{2}_M\Bigr),
\end{equation}
where $C_m>0$ is a constant.
Since $u\in{\rm Dom\,}\Box^{(0)}_m$, we have $\|\ddbar_mu\|^{2}_M=(\,\Box^{(0)}_mu\,|\,u\,)_M$. From this observation and \eqref{e-gue171008aII}, we get
\eqref{e-gue171007a} for $q=0$.

Step 2. Now we prove \eqref{e-gue171007a} for $q>0$. Let $u\in{\rm Dom\,}\Box^{(q)}_m\cap\Omega^{0,q}_m(\overline M)$.
Fix $p\in X$. Let $z=(z_1,\ldots,z_n)$ be holomorphic coordinates centered at $p$ defined on an open neighborhood $U_0$ of $p$ in $M'$ such that \eqref{e-gue171006ycbI} holds. We will use the same notations as in Lemma~\ref{lem1-170912}. Let $\chi\in C^\infty_0(U_0)$ and put $v:=\chi u$. On $U_0$, we write $u=\sideset{}{'}\sum_{|J|=q} u_J\overline \omega^J$, where $\sideset{}{'}\sum$ means that the summation is performed only over strictly increasing multiindices and for $J=(j_1,\ldots,j_q)$, $\ol\omega^J=\ol\omega^{j_1}\wedge\cdots\wedge\ol\omega^{j_q}$. For every strictly increasing multiindex $J$, $\abs{J}=q$, put $v_J:=\chi u_J$. Then $v=\sideset{}{'}\sum_{|J|=q} v_J\overline \omega^J$. 
We have
\begin{equation}\label{e-gue171008yb}
\begin{split}
\norm{v}^2_{1,\ol M}\leq C_1\Bigr(\sideset{}{'}\sum_{|J|=q, j\in\set{1,\ldots,n}}\|\overline L_jv_J\|^2_M+\sideset{}{'}\sum_{|J|=q, j\in\set{1,\ldots,n}}\|L_jv_J\|^2_M+\norm{v}^2_M\Bigr),
\end{split}
\end{equation}
where $C_1>0$ is a constant. Moreover, it is easy to see that
\begin{equation}\label{e1-170913}
\begin{split}
&\|\overline\partial v\|^2_M+\|\overline\partial^{\star}_fv\|^2_M\\
&=\sideset{}{'}\sum_{|J|=q, j\notin J,j\in\set{1,\ldots,n}}\|\overline L_jv_J\|^2_M+
\sideset{}{'}\sum_{|J|=q, j\in J,j\in\set{1,\ldots,n}}\|L_jv_J\|^2_M+O(\|v\|_M\cdot\|v\|_{1,\ol M}).
\end{split}
\end{equation}
From \eqref{e-gue171008II}, we see that for $j=1,\ldots,n-1$ and every strictly increasing multiindex $J$, $\abs{J}=q$, we have
\begin{equation}\label{e-gue171008ybI}
\begin{split}
&\norm{L_jv_J}^2_M=\norm{\ol L_jv_J}^2_M+O(\norm{v}_M\norm{v}_{1,\ol M}),\\
&\norm{\ol L_jv_J}^2_M=\norm{L_jv_J}^2_M+O(\norm{v}_M\norm{v}_{1,\ol M}).
\end{split}
\end{equation}
From \eqref{e-gue171008ybI} and \eqref{e1-170913}, we deduce that
\begin{equation}\label{e1-170913r}
\begin{split}
&\|\overline\partial v\|^2_M+\|\overline\partial^{\star}_fv\|^2_M\\
&=\frac{1}{2}\sideset{}{'}\sum_{|J|=q, j\in\set{1,\ldots,n-1}}\Bigr(\|\overline L_jv_J\|^2_M+\|L_jv_J\|^2_M\Bigr)\\
&+\sideset{}{'}\sum_{|J|=q, n\notin J}\|\ol L_nv_J\|^2_M+\sideset{}{'}\sum_{|J|=q, n\in J}\|L_nv_J\|^2_M+O(\|v\|_M\cdot\|v\|_{1,\ol M}).
\end{split}
\end{equation}
From \eqref{e-gue171008yb}, \eqref{e1-170913r}, \eqref{e-gue171008a} and \eqref{e-gue171030}, we see that if $U_0$ is small enough, then
\begin{equation}\label{e-gue171008lv}
\|v\|^{2}_{1,\ol M}\leq C_2\Bigr(\|Tv\|^{2}_M+\|v\|^{2}_M+\|\ddbar v\|^{2}_M+\norm{\ddbar^\star_fv}^2_M\Bigr),
\end{equation}
where $C_2>0$ is a constant. Since $Tu=imu$, we conclude that $\|Tv\|^{2}_M\leq C_3\norm{u}_M$, where $C_3>0$ is a constant. From this observation, \eqref{e-gue171008lv} and by using partition of unity, we conclude that
\begin{equation}\label{e-gue171008aIIq}
\|u\|^{2}_{1,\ol M}\leq C_{m}\Bigr(\|u\|^{2}_M+\|\ddbar_mu\|^{2}_M+\norm{\ddbar^\star_mu}^2_M\Bigr),
\end{equation}
where $C_m>0$ is a constant.
Since $u\in{\rm Dom\,}\Box^{(q)}_m$, we have $\|\ddbar_mu\|^{2}_M+\norm{\ddbar^\star_mu}^2_M=(\,\Box^{(q)}_mu\,|\,u\,)_M$. From this observation and \eqref{e-gue171008aIIq}, we get
\eqref{e-gue171007a} for $q>0$.
\end{proof}

We can repeat the method of Folland-Kohn (see Section 5 in~\cite{FK72}) and deduce

\begin{lemma}\label{e-gue171008u}
Let $u\in{\rm Dom\,}\Box^{(q)}_m$ and let $\Box^{(q)}_mu=v\in L^2_{(0,q),m}(M)$. Then $u\in W^1(\ol M)$ and we have
\begin{equation}\label{e-gue171008f}
\|u\|_{1,\ol M}\leq c_{m}(\|\Box^{(q)}_mu\|_M+\|u\|_M),
\end{equation}
where $c_m>0$ is a constant independent of $u$. Moreover, if $v\in\Omega^{0,q}_m(\ol M)$, then $u\in\Omega^{0,q}_m(\ol M)$.
\end{lemma}

By standard arguments in functional analysis, we get the following

\begin{proposition}\label{f170918}
The operator $\Box^{(q)}_{m}: {\rm Dom\,}\Box^{(q)}_m\To L^2_{(0,q),m}(M)$ has $L^{2}$ closed range.
Moreover,
${\rm Ker\,}\Box^{(q)}_{m}\subset \Omega^{0.q}_{m}(\ol M)$ and $\dim{\rm Ker\,}\Box^{(q)}_{m}<\infty$.

Furthermore, there exists a bounded operator $N^{(q)}_{m}: L^2_{(0, q), m}(M)\rightarrow{\rm Dom\,}\Box^{(q)}_m$  with $N^{(q)}_{m}(\Omega^{0,q}_{m}(\ol M))\subset\Omega^{0,q}_{m}(\ol M)$ such that
\begin{equation}\label{e-gue171008pmy}\left\{
\begin{aligned}
&\Box^{(q)}_{m}N^{(q)}_{m}+B^{(q)}_{m}=I\ \ \mbox{on $L^{2}_{(0,q),m}(M)$}, \\
&N^{(q)}_{m}\Box^{(q)}_{m}+B^{(q)}_{m}=I \ \ \mbox{on ${\rm Dom\,}\Box^{(q)}_{m}$}, \\
\end{aligned}
\right.
\end{equation}
where $B^{(q)}_{m}:L^{2}_{(0,q),m}(M)\rightarrow{\rm Ker\,}\Box^{(q)}_{m}$ is the orthogonal projection with respect to $(\,\cdot\,|\,\cdot\,)_M$.
\end{proposition}

From Proposition~\ref{f170918}, \eqref{e-gue171006ycb}, \eqref{Neumann condition} and \eqref{e-gue171007s}, we deduce that
\begin{equation}\label{Neum1}
{\rm Ker\,}\Box^{(q)}_{m}= \{u\in\Omega^{0,q}_{m}(\ol M);\,
\ddbar u=0, \ddbar^{\star}_fu=0, \gamma\ddbar\rho^{\wedge,\star}u=0\}.
\end{equation}
\begin{theorem}\label{t-gue171008pm-2020}
	We have
	\begin{equation}\label{iso170918}
	{\rm Ker\,}\Box^{(q)}_{m}\cong H^{q}_{m}(\ol M).
	\end{equation}
	and  ${\rm dim\,}H^q_m(\ol M)<+\infty$.
\end{theorem}

\begin{proof}
	Consider the linear map:
	\[\begin{split}
	T^{(q)}_m: {\rm Ker\,}\ddbar_m:=\set{u\in\Omega^{0,q}_m(\ol M);\, \ddbar_mu=0}&\To{\rm Ker\,}\Box^{(q)}_m,\\
	u&\To B^{(q)}_mu,
	\end{split}\]
	where $B^{(q)}_m$ is as in \eqref{e-gue171008pmy}. It is clear that the map is well-defined and surjective. We claim that
	\begin{equation}\label{e-gue171008pmyI}
	{\rm Ker\,}T^{(q)}_m={\rm Im\,}\ddbar_m:=\set{\ddbar_mu\in\Omega^{0,q}_m(\ol M);\, u\in\Omega^{0,q-1}_m(\ol M)}.
	\end{equation}
	Let $u\in{\rm Ker\,}T^{(q)}_m$. Then $B^{(q)}_mu=0$. From \eqref{e-gue171008pmy}, we see that
	\begin{equation}\label{e-gue171008pmyII}
	u=\Box^{(q)}_mN^{(q)}_mu=(\ddbar_m\,\ddbar^\star_m+\ddbar^\star_m\,\ddbar_m)N^{(q)}_mu.
	\end{equation}
	From $\ddbar_mu=0$ and \eqref{e-gue171008pmyII}, we get
	\begin{equation}\label{e-gue171008pmyIII}
	\ddbar_m\,\ddbar^\star_m\,\ddbar_mN^{(q)}_mu=0.
	\end{equation}
	From \eqref{e-gue171008pmyIII} and note that $\ddbar_mN^{(q)}_mu\in{\rm Dom\,}\ddbar^\star_m$, we have
	\begin{equation}\label{e-gue171008pmya}
	(\,\ddbar^\star_m\,\ddbar_mN^{(q)}_mu\,|\,\ddbar^\star_m\,\ddbar_mN^{(q)}_mu\,)_M=(\,\ddbar_m\,\ddbar^\star_m\,\ddbar_mN^{(q)}_mu\,|\,\ddbar_mN^{(q)}_mu\,)_M=0.
	\end{equation}
	From \eqref{e-gue171008pmya} and \eqref{e-gue171008pmyII}, we get
	\begin{equation}\label{e-gue171008z}
	u=\ddbar_m\,\ddbar^\star_mN^{(q)}_mu.
	\end{equation}
	Note that $\ddbar^\star_mN^{(q)}_mu\in\Omega^{0,q-1}_m(\ol M)$. From this observation and \eqref{e-gue171008z}, the claim \eqref{e-gue171008pmyI} follows and we get the theorem.
\end{proof}

\begin{center}
{\bf Acknowledgement}
\end{center}

This work was started when the authors visited Institute of Mathematics of National University of Singapore for the program ``Complex Geometry, Dynamical System and Foliation Theory". Parts of this work were finished when the second and third author visited Institute of Mathematics, Academia Sinica  from July to September 2017. The authors would like to express their gratitude to both of the institutes for hospitality, a comfortable accommodation and financial support during their visit. 
We would like to express our deep gratitude to the Referee for the thorough reading
and useful comments and suggestions, which were very helpful to us.

\bibliographystyle{amsalpha}

\end{document}